\definecolor{c1}{rgb}{0,0,1}
\definecolor{c2}{rgb}{0,0.3,0.9}
\definecolor{c3}{rgb}{0.3,0.9}
\newcommand{\bbR}{\mathbb{R}}
\newcommand{\R}{\bbR}
\DeclareMathOperator{\diver}{div}
\def\XXint#1#2#3{{\setbox0=\hbox{$#1{#2#3}{\int}$ }
\vcenter{\hbox{$#2#3$ }}\kern-.6\wd0}}
\theoremstyle{plain}
\newtheorem{theorem}{Theorem}[section]
\theoremstyle{definition}
\theoremstyle{lemma}
\newtheorem{lemma}[theorem]{Lemma}
\theoremstyle{Remark}
\newtheorem{Remark}[theorem]{Remark}
\theoremstyle{proposition}
\newtheorem{proposition}[theorem]{Proposition}
\theoremstyle{corollary}
\newtheorem{corollary}[theorem]{Corollary}
\theoremstyle{example}
\theoremstyle{assumption}
\newtheorem{assumption}[theorem]{Assumption}
\begin{document}
\pagestyle{empty}
\title{Trend to equilibrium and hypoelliptic regularity for the relativistic  Fokker-Planck equation}
\author{Anton Arnold$^\ast$, Gayrat Toshpulatov\thanks{Institut f\"ur Analysis \& Scientific Computing, Technische Universit\"at Wien, Wiedner Hauptstr. 8-10, A-1040 Wien, Austria, {\tt anton.arnold@tuwien.ac.at, gayrat.toshpulatov@tuwien.ac.at.}}}

\maketitle

\pagestyle{plain}

\begin{abstract}
We consider the relativistic, spatially inhomogeneous Fokker-Planck equation with an external confining potential. 
We prove  the exponential time decay  of solutions towards the global equilibrium  in  weighted $L^2$ and Sobolov spaces. Our result holds for a wide class of external potentials and the  estimates on the rate of convergence are explicit and constructive. Moreover, we prove that the associated semigroup of the equation has hypoelliptic regularizing
properties and we obtain explicit rates on this regularization. The technique is based on the construction of suitable Lyapunov functionals.
\end{abstract}
\textbf{Keywords:} Kinetic theory, Fokker-Planck equation, relativistic diffusion, confinement potential, degenerate diffusion, long-time
behavior, convergence to equilibrium, hypocoercivity, hypoellictic regularity, Lyapunov functional.\\
\textbf{2020 Mathematics Subject Classification:} 35Q84, 35B40, 35Q82, 82C40.
\tableofcontents
\section{Introduction}
In this paper, we study the long-time behavior of the relativistic, spatially inhomogeneous Fokker-Planck equation \cite{1+3, AC}
\begin{equation}\label{Eq0}
\begin{cases}
\partial_t f+\displaystyle \frac{p}{m\sqrt{1+\frac{|p|^2}{m^{2}c^{2}}}}\cdot \nabla_x f-q\nabla_x V(x) \cdot \nabla_pf=\text{div}_{p}(\sigma D(p)\nabla_p f+\nu pf), \, \, \, \, \, \, x,\, p\in \mathbb{R}^d, \, \, t>0 \\
f_{|t=0}=f_0
\end{cases},
\end{equation}
for any $d\in \mathbb{N}.$ This  kinetic model describes the time evolution of a system with a large number of particles (e.g. in a plasma) undergoing diffusion and friction. In \eqref{Eq0}, $x$ denotes position, $p$ the relativistic momentum,  and $\displaystyle \frac{p}{m\sqrt{1+\frac{|p|^2}{m^{2}c^{2}}}}$  the velocity.  The  unknown  $f=f(t,x,p)\geq 0$ represents the evolution of the phase space probability density of particles. The left hand side is the transport operator with force field $-\nabla_x V(x)$, while the right
hand side describes the diffusion of particles and the interaction with the environment. The (positive) physical constants denoted by $m,$ $c,$  $q,$  $\sigma,$ and $\nu$ are, respectively, the particle rest mass, the vacuum speed of light, the particle charge, diffusion, and  friction coefficients.
$D(p) $ is the relativistic diffusion matrix given by
\begin{equation}\label{matrixD}
  D=\frac{I+\frac{p\otimes p}{m^2c^2}}{\sqrt{1+\frac{|p|^2}{m^2c^2}}}\in \mathbb{R}^{d\times d},    
\end{equation}
where $I\in \mathbb{R}^{d\times d}$ is the identity matrix and $\otimes$ denotes the Kronecker product.  
For a derivation of \eqref{Eq0} and the matrix \eqref{matrixD} from relativistic Langevin dynamics, we refer to \cite{1+3}. Moreover, the same model is obtained in \cite[\S2]{AC}  under the postulate that it should be Lorentz invariant in the absence of friction, i.e.\ for $\nu=0$ (just like the non-relativistic analog is Galilean invariant for $\nu=0$): 
The resulting anisotropic diffusion arises as the Laplace-Beltrami operator over the relativistic hyperboloid with respect to the Minkowski metric. 
Also, this model is compatible with the finite propagation speed of particles in relativity. 
Finally, we remark that there are several different ``relativistic Fokker-Planck equations'' in the literature, for a review see \cite{DH2009}.

 Equation \eqref{Eq0} has several properties following standard physical considerations.
Whenever $f(t,x,p)$ is a (well-behaved) solution of \eqref{Eq0},
one has  \textit{ global conservation of mass}
\begin{equation}\label{cons.mass}   \displaystyle \int_{\mathbb{R}^{2d}}f(t,x,p)dxdp=\int_{\mathbb{R}^{2d}}f_0(x,p)dxdp, \, \, \, \, \, \, \forall t\geq 0.
\end{equation} 
 Therefore, without loss of generality, we shall assume 
$f_0\geq 0$ and $ \displaystyle \int_{\mathbb{R}^{2d}}f_0(x,p)dxdp=1.$\\
 If $V$ grows fast enough,
 \eqref{Eq0} has a unique normalized \textit{steady state} or \textit{global equilibrium} \cite[Section 3.4]{AC}  given by 
\begin{equation}\label{steady state}
f_{\infty}(x,p)= 
\rho_{\infty}(x)M(p),
\end{equation} 
where \begin{equation}\label{star1}\rho_{\infty}(x)\colonequals \displaystyle \frac{e^{-\frac{mq\nu}{\sigma}V(x)}}{ \int_{\mathbb{R}^{d}}e^{-\frac{qm\nu}{\sigma}V(x')}dx' },\, \, \, \, \, \, \, \displaystyle M(p)\colonequals \frac{e^{-\frac{mc\nu}{\sigma}\sqrt{m^2c^2+|p|^2}}}{ \int_{\mathbb{R}^{d}}e^{-\frac{mc\nu}{\sigma}\sqrt{m^2c^2+|p'|^2}}dp'}.
\end{equation} 
 $M(p)$  is called the Maxwell-J\"uttner
distribution, for its normalizing integral see \cite{zaninetti2020}.

\eqref{Eq0} is \textit{dissipative} in the sense that the relative entropy or free energy functional decreases: let $H$ be a functional defined on the space of probability densities  by
\begin{equation*}
f \mapsto H[f]\colonequals \int_{\mathbb{R}^{2d}} f \ln{\frac{f}{f_{\infty}}}dxdp
\end{equation*} 
($f$ is not necessarily the solution). We note that $H[f_{\infty}]=0$ and  $\displaystyle H[f]\geq \frac{1}{2}||f-f_{\infty}||^2_{L^1(\mathbb{R}^{2d})}$ by the Csisz\'ar-Kullback-Pinsker inequality \cite{Cs}.  Hence,  the minimum of $H$ is zero and  it is attained at $f_{\infty}.$   If $f=f(t,x,p)$ is a smooth solution of   \eqref{Eq0}, we have (see \cite[Section 3.3]{AC}) $$\displaystyle \frac{d}{dt}H[f(t)] 
 \leq 0. $$
 This decay of the functional $H$ is a version of Boltzmann's  $H-$theorem stated for the Boltzmann equation \cite{Cercig, Vil.H}. 

 On the basis of the decay of the functional $H,$  one may conjecture that  $H[f(t)]$ decreases to its minimum (which is zero) as $t\to \infty.$ Since this minimum   is obtained at $f_{\infty},$ one can expect that $f(t)$ converges to the equilibrium distribution $f_{\infty}$ as $t\to \infty.$  
 We shall  therefore tackle the interesting  problem to prove (or disprove) that solutions of \eqref{Eq0} converge towards this
equilibrium as $t\to \infty$ and to estimate the convergence rate with constructive bounds. Such explicit and constructive estimates are essential for applications  in physics (equilibration process, numerical simulations). 

 Equation \eqref{Eq0} was introduced in \cite[Eq.(47)]{1+3} and \cite[Eq.(8)]{AC} as a relativistic generalization of the classical kinetic Fokker-Planck  equation \cite{Chan, Chan1, Risken}
\begin{equation}\label{KFP}
\begin{cases}
\partial_t f+\displaystyle \frac{p}{m}\cdot \nabla_x f-q\nabla_x V \cdot \nabla_pf=\text{div}_{p}(\sigma \nabla_p f+\nu pf), \, \, \, \, \, \, x,\, p\in \mathbb{R}^d, \, \, t>0 \\
f_{|t=0}=f_0
\end{cases}.
\end{equation}
This classical equation can be obtained from  \eqref{Eq0} by formally taking the Newtonian limit $c\to \infty.$ In \cite{Felix.Cal.},  this formal limit was justified in the sense that  solutions of \eqref{Eq0} converge  to solutions of \eqref{KFP} in $L^1$ as $c\to \infty.$
Equation \eqref{KFP} is inconsistent with  relativistic mechanics because  it has  infinite speed of propagation: if the particles are initially in a compact region (i.e. $f_0(x,p)$ has  compact support with respect to $x$ and $p$), then, after any short time $t>0,$  we can find particles everywhere with non-zero probability (i.e. $f(t,x,p)>0$), see \cite[Appendix A.22]{Vil.M}. This property   contradicts the law of special relativity that particles can not move faster than light.  By contrast, Equation \eqref{Eq0} is compatible with this physical law as it  exhibits  finite speed of propagation  w.r.t. the $x$ variable \cite[Section 3.2]{AC}. Note, however, that the degenerate parabolicity of \eqref{Eq0}
does entail infinite speed of propagation with respect to the $p$ variable.

The classical equation \eqref{KFP} has been studied  comprehensively: well-posedness and hypoelliptic regularity were obtained in  \cite{NH, Her.el., Vil.M}. The  long-time behavior of \eqref{KFP} was studied  in \cite{H.N} for fast growing potentials. By using hypocoercive methods, Villani proved exponential convergence results in \cite{DesVil, Vil.M}. This result was extended  in \cite{Baudoin} for potentials with singularities. In \cite{dolbeault2015hypocoercivity}, Dolbeault, Mouhot, and Schmeiser developed a method to
obtain exponential decay in $L^2$ for a large class of linear kinetic equations, and, as an application, an exponential decay in $L^2$ was proven for \eqref{KFP}. Their method
was also used to study the long-time behavior of \eqref{KFP} when the potential $V$ is zero or grows slowly as
$|x| \to \infty,$ see \cite{confint, 12}.  Based on a probabilistic coupling method, Eberle, Guillin, and Zimmer \cite{Wass}
obtained an exponential decay result in Wasserstein distance. We also refer the recent work \cite{AT}  where sharper exponential rates were obtained using a modified entropy method.  

 Concerning the relativistic equation \eqref{Eq0}, there are only few studies: global existence and uniqueness  were proven in \cite{AC}.
 The long-time behavior of spatially homogeneous solutions of \eqref{Eq0} was studied \cite{Angst, Felix.Cal.}, where the authors used logarithmic Sobolev inequalities and entropy methods \cite{Con.Sob, Ansgar.J}. When  \eqref{Eq0} is supplemented with periodic boundary conditions (i.e. $x\in \mathbb{T}^d$) and $V=0,$  exponential decay of solutions to the steady state was proven in  \cite{Cal} by using the hypocoercive method developed by Villani \cite{Vil.M}.

In this paper,
we shall improve these previous results when there is a non-zero potential
$V.$ For the full system \eqref{Eq0} with a non-zero potential $V$ we
shall prove the exponential convergence $f (t) \to  f_{\infty}$ as $t \to \infty$ for a wide class of
potentials $V.$ Our convergence rates  are explicit and constructive. We show that, although the equation is only degenerate parabolic, the equation has instantaneous regularizing
properties which is called \textit{hypoellipticity} \cite{horm}.   We provide  explicit rates on this regularization. 
  We believe our results are the first convergence and regularity results for \eqref{Eq0} with a non-zero potentials $V.$

The organization of this paper is as follows. In Section \ref{sec:main}, we define the assumptions on the
potential and state the main results. Sections \ref{sec:L2-conv} and \ref{sec:H1-conv} are devoted to prove the convergence $f (t) \to  f_{\infty}$ as $t \to \infty$ in the weighted $L^2$  and Sobolev spaces. We study hypoelliptic regularity properties of the equation in Section \ref{sec:hypoelliptic} and list several technical results in the Appendix.  
In Section \ref{sec:limit} we show that the decay rate for the homogeneous relativistic Fokker-Planck equation is uniform in the limit as $c\to\infty$, and we conclude in Section \ref{sec:conclusion}. 

\section{Setting and main result}\label{sec:main}
We use the notations $$V_0(x)\colonequals \sqrt{1+|\nabla_x V(x)|^2}\, \, \, \text{ and }\, \, \, p_0(p)\colonequals \sqrt{1+|p|^2}.$$
For simplicity, we set all physical constants to unity: $m=c=q=\sigma=\nu=1.$  Therefore, we shall consider the normalized equation
\begin{equation}\label{Eq}
\begin{cases}
\partial_t f+\displaystyle \frac{p}{p_0}\cdot \nabla_x f-\nabla_x V(x) \cdot \nabla_pf=\text{div}_{p}( D(p)\nabla_p f+ pf), \, \, \, \, \, \, x,\, p\in \mathbb{R}^d, \, \, t>0 \\
f_{|t=0}=f_0
\end{cases}
\end{equation}
with $\displaystyle D(p)=\frac{I+p\otimes p}{p_0}.$
We define the  weighted spaces $L^2(\mathbb{R}_x^{d}, \rho_{\infty}),$ $L^2(\mathbb{R}_p^{d}, M),$ and $L^2(\mathbb{R}^{2d}, f_{\infty})$   as the Lebesgue spaces associated, respectively, to the norms
$$||g||_{L^2(\mathbb{R}_x^{d}, \rho_{\infty})}\colonequals \sqrt{\int_{\mathbb{R}^{d}}g^2\rho_{\infty}dx}, \, \, \, ||g||_{L^2(\mathbb{R}_p^{d}, M)}\colonequals \sqrt{\int_{\mathbb{R}^{d}}g^2Mdp},$$
and $$||g||_{L^2(\mathbb{R}^{2d}, f_{\infty})}\colonequals \sqrt{\int_{\mathbb{R}^{2d}}g^2f_{\infty}dxdp}. $$
We note that $\displaystyle M(p)= \frac{e^{-\sqrt{1+|p|^2}}}{ \int_{\mathbb{R}^{d}}e^{-\sqrt{1+|p'|^2}}dp'}$ gives rise to the following Poincar\'e inequality 
\cite[Theorem 3]{Felix.Cal.}: there is a positive  constant $\kappa_1$ such that 
\begin{equation}\label{Poincare M}
\int_{\mathbb{R}^{d}} h^2 Mdp-\left(\int_{\mathbb{R}^{d}} h M dp \right)^2\leq \frac{1}{\kappa_1}\int_{\mathbb{R}^{d}} \nabla^{T}_p h D\nabla_p h  Mdp
\end{equation} holds for all $h\in L^2(\mathbb{R}_p^{d}, M) $ with $\int_{\mathbb{R}^{d}} \nabla^{T}_p h D\nabla_p h  Mdp<\infty. $

We shall assume that $\displaystyle \rho_{\infty}(x)= \frac{e^{-V(x)}}{ \int_{\mathbb{R}^{d}}e^{-V(x')}dx' }$ also gives rise to a Poincar\'e inequality. Also we shall assume  some growth conditions on $V:$
\begin{assumption}\label{Assumptions i-ii} \begin{itemize}
\item[i)] 
Let  $V\in C^{2}(\mathbb{R}^d)$ be such that $e^{-V} \in L^1(\mathbb{R}^d),$ and   there exists a constant $\kappa_2>0$ such that the Poincar\'e inequality 
    \begin{equation}\label{Poincare rho}
  \int_{\mathbb{R}^{d}} h^2 \rho_{\infty}dx-\left(\int_{\mathbb{R}^{d}} h \rho_{\infty}dx\right)^2\leq \frac{1}{\kappa_2}\int_{\mathbb{R}^{d}} |\nabla_x h|^2 \rho_{\infty}dx\, \, 
\end{equation}
holds for all $h\in L^2(\mathbb{R}_x^{d}, \rho_{\infty}) $ with $|\nabla_x h|\in L^2(\mathbb{R}_x^{d}, \rho_{\infty}). $
\item[ii)] There exist constants $c_1>0,$  $c_2\in [0,1),$ and $c_3>0$ such that  \begin{equation}\label{cond.V}
\Delta_x V(x) \leq c_1+\frac{c_2}{2}|\nabla_x V(x)|^2, \, \, \, \, \, \, \left|\left|\frac{\partial^2 V(x)}{\partial x^2}\right|\right|_F\leq c_3(1+|\nabla_x V(x)|), \, \, \, \, \forall\, x\in \mathbb{R}^d,
\end{equation}
where $\displaystyle \left|\left|\frac{\partial^2 V(x)}{\partial x^2}\right|\right|_F\colonequals \sqrt{\sum_{i,j=1}^d (\partial_{x_i x_j}V(x))^2}$ is the Frobenius norm of $\displaystyle \frac{\partial^2 V(x)}{\partial x^2}.$
\end{itemize}
\end{assumption}
There are a lot of studies and sufficient conditions implying the Poincar\'e inequality \eqref{Poincare rho}. For example, if $V$ is uniformly convex (Bakry-Emery criterion) or if $$ \liminf_{|x|\to \infty} \left(a|\nabla V(x)|^2-\Delta V(x)\right)> 0
$$  
 for some $a \in (0,1),$ then the Poincar\'e inequality \eqref{Poincare rho} holds.  For more information  see \cite{BBCG}, \cite[Chapter 4]{BGL}.
We note that the potentials of the form $$V(x)=r|x|^{2k}+\tilde{V}(x),$$
where $r>0,$ $k>1$ and $\tilde{V}\colon \mathbb{R}^d \to \mathbb{R}$ is a polynomial of degree  $j<2k,$ satisfy our assumptions.

We now state our first result:
\begin{theorem}[\textbf{Exponential decay in $L^2(\mathbb{R}^{2d}, f_{\infty})$}]\label{main result}
Let $\frac{f_0}{f_{\infty}}\in L^2(\mathbb{R}^{2d}, f_{\infty})$ and $V$ satisfy Assumption \ref{Assumptions i-ii}.  Then there are explicitly computable constants $C_1>0$ and $\lambda>0$ (independent of $f_0$) such that
\begin{equation}\label{L2-decay}
\left|\left|\frac{f(t)-f_{\infty}}{f_{\infty}}\right|\right|_{L^2(\mathbb{R}^{2d}, f_{\infty})}\leq C_1 e^{-\lambda t} \left|\left|\frac{f_0-f_{\infty}}{f_{\infty}}\right|\right|_{L^2(\mathbb{R}^{2d}, f_{\infty})}
\end{equation}
holds for all $t\geq 0.$   
\end{theorem}

Theorem \ref{main result} shows that the solution $ \frac{f(t)-f_{\infty}}{f_{\infty}}$ converges  exponentially to zero in $L^2(\mathbb{R}^{2d}, f_{\infty})$  as $t\to \infty.$  Next we want to obtain this convergence in a more regular space. Hence, 
we define the following weighted Sobolev space
$\mathscr{H}^1(\mathbb{R}^{2d}, f_{\infty})$ associated to the norm
\begin{align}\label{H^1-norm}
||h||^2_{\mathscr{H}^1(\mathbb{R}^{2d}, f_{\infty})} &\colonequals \int_{ \mathbb{R}^{2d}}h^2f_{\infty}dxdp +\int_{\mathbb{R}^{2d}}\frac{1}{V_0^3(x)p_0^3}\nabla^T_x h\left(I-\frac{p\otimes p}{p_0^2}\right)\nabla_x hf_{\infty}dxdp \nonumber \\ 
&\, \, \, \, \,\, \, \,  +\int_{\mathbb{R}^{2d}}\frac{1}{V_0(x)p_0}\nabla^T_p h(I+p \otimes p)\nabla_p hf_{\infty}dxdp \nonumber \\ 
&\, \,=\int_{\mathbb{R}^{2d}}h^2f_{\infty}dxdp+\int_{ \mathbb{R}^{2d}}\frac{1}{V_0^3(x)p_0^{3}}\left(|\nabla_x h|^2-\frac{|p\cdot \nabla_x h|^2}{p_0^2}\right)f_{\infty}dxdp \nonumber \\ 
&\, \,\, \, \, \, \, \,  +\int_{ \mathbb{R}^{2d}}\frac{1}{V_0(x)p_0}(|\nabla_p h|^2+|p\cdot \nabla_p h|^2)f_{\infty}dxdp.
\end{align}
This norm is well-defined since the matrices $\displaystyle \frac{1}{V_0^3(x)p_0^3}\left(I-\frac{p\otimes p}{p_0^2}\right)$ and $\displaystyle \frac{1}{V_0(x)p_0}(I+p \otimes p)$ are positive definite for all $x, \,p\in \mathbb{R}^d.$ Clearly, $\mathscr{H}^1(\mathbb{R}^{2d}, f_{\infty})\subset L^2(\mathbb{R}^{2d}, f_{\infty}).$ 
To motivate the above definition better, we note that $I-\frac{p\otimes p}{p_0^2} = \frac{1}{p_0}D(p)^{-1}$.

 Our second result shows that the solution $ \frac{f(t)-f_{\infty}}{f_{\infty}}$ converges exponentially to zero in $\mathscr{H}^1(\mathbb{R}^{2d}, f_{\infty})$   as $t\to \infty:$  
\begin{theorem}[\textbf{Exponential decay in $\mathscr{H}^1(\mathbb{R}^{2d}, f_{\infty})$}]\label{decay in H^1}
Let $\frac{f_0}{f_{\infty}}\in \mathscr{H}^1(\mathbb{R}^{2d}, f_{\infty})$ and $V$ satisfy Assumption \ref{Assumptions i-ii}.
Then there are constants $C_2>0$ and $\Lambda>0$ (independent of $f_0$) such that 
\begin{equation*}
\left|\left|\frac{f(t)-f_{\infty}}{f_{\infty}}\right|\right|_{\mathscr{H}^1(\mathbb{R}^{2d}, f_{\infty})}\leq C_2 e^{-\Lambda t} \left|\left|\frac{f_0-f_{\infty}}{f_{\infty}}\right|\right|_{\mathscr{H}^1(\mathbb{R}^{2d}, f_{\infty})}
\end{equation*}
holds for all $t\geq 0.$   
\end{theorem}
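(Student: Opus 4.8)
The approach I would take is the $\mathscr{H}^1$-hypocoercivity scheme of H\'erau and Villani (cf.\ \cite{Her.el., Vil.M}), adapted to the relativistic weights appearing in \eqref{H^1-norm}. First I would rewrite \eqref{Eq} in terms of $h\colonequals f/f_\infty$: using $\nabla_x\ln\rho_\infty=-\nabla_x V$, $\nabla_p\ln M=-p/p_0$ and the identity $Dp/p_0=p$, the equation becomes
\[
\partial_t h=-\mathcal{T}h+\mathcal{C}h,\qquad \mathcal{T}h\colonequals\tfrac{p}{p_0}\cdot\nabla_x h-\nabla_x V\cdot\nabla_p h,\qquad \mathcal{C}h\colonequals\tfrac1M\,\text{div}_p(MD\nabla_p h),
\]
where $\mathcal{T}$ is antisymmetric and $\mathcal{C}$ is symmetric and non-positive on $L^2(\mathbb{R}^{2d},f_\infty)$, with $-\langle\mathcal{C}h,h\rangle=\int_{\mathbb{R}^{2d}}\nabla_p^Th\,D\,\nabla_p h\,f_\infty\,dxdp$. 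Consequently $\tfrac{d}{dt}\|h-1\|_{L^2(\mathbb{R}^{2d},f_\infty)}^2=-2\int_{\mathbb{R}^{2d}}\nabla_p^Th\,D\nabla_ph\,f_\infty\,dxdp$, so the $L^2$-dissipation only sees the $p$-variable and the zeroth-order quantity cannot decay on its own; the $\mathscr{H}^1$-estimate must therefore be generated by a first-order Lyapunov functional that also controls $\nabla_x h$.

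The plan is then to work with the functional
\begin{align*}
\Phi[h]\colonequals{}&\|h-1\|_{L^2(\mathbb{R}^{2d},f_\infty)}^2+a\int_{\mathbb{R}^{2d}}\tfrac{1}{V_0 p_0}\nabla_p^Th(I+p\otimes p)\nabla_p h\,f_\infty\,dxdp\\
&+2b\int_{\mathbb{R}^{2d}}w\,\nabla_x h\cdot\nabla_p h\,f_\infty\,dxdp+c\int_{\mathbb{R}^{2d}}\tfrac{1}{V_0^3 p_0^3}\nabla_x^Th\bigl(I-\tfrac{p\otimes p}{p_0^2}\bigr)\nabla_x h\,f_\infty\,dxdp,
\end{align*}
with constants $0<c\le b\le a\le1$ and a suitable cross-weight $w=w(x,p)$ interpolating the two diagonal weights (e.g.\ $w=\tfrac{1}{V_0 p_0^2}$). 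I would first check that, by Young's inequality applied to the mixed term, $\Phi[h]$ is equivalent to $\|h\|_{\mathscr{H}^1(\mathbb{R}^{2d},f_\infty)}^2$ on $\{\int h f_\infty=1\}$ for a suitable choice of small constants with $b^2\le ac$. The reason the weights $\tfrac{1}{V_0^3p_0^3}\bigl(I-\tfrac{p\otimes p}{p_0^2}\bigr)$ and $\tfrac{1}{V_0 p_0}(I+p\otimes p)=\tfrac1{V_0}D$ in \eqref{H^1-norm} are the right ones becomes visible from the commutators: one has $[\nabla_p,\mathcal{T}]=\tfrac1{p_0}\bigl(I-\tfrac{p\otimes p}{p_0^2}\bigr)\nabla_x$ (the hypoelliptic bracket transferring the $p$-dissipation into coercivity in $x$), $[\nabla_x,\mathcal{T}]h=-\tfrac{\partial^2 V}{\partial x^2}\nabla_p h$, and the convenient algebraic identity $D\bigl(I-\tfrac{p\otimes p}{p_0^2}\bigr)=\tfrac1{p_0}I$.

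Next I would differentiate $\Phi[h(t)]$ along the flow, using $\partial_t\nabla_{x,p}h=\nabla_{x,p}\partial_t h$ together with the brackets above and the (more laborious) brackets of $\nabla_{x,p}$ with $\mathcal{C}$ and with the $x,p$-dependent weights. After integrating by parts and discarding the antisymmetric contributions (which vanish against $f_\infty$), this should reduce to a closed system of differential inequalities for the four quantities $W\colonequals\|h-1\|^2_{L^2(\mathbb{R}^{2d},f_\infty)}$, $Z\colonequals\int V_0^{-1}\nabla_p^ThD\nabla_p h\,f_\infty$, $Y\colonequals\int w\,\nabla_x h\cdot\nabla_p h\,f_\infty$, and $X\colonequals\int V_0^{-3}p_0^{-3}\nabla_x^Th(I-p_0^{-2}p\otimes p)\nabla_x h\,f_\infty$, schematically of the form $\dot W\le-2Z$, $\dot Z\le-\kappa Z+C(X+Z)$, $\dot Y\le-\alpha X+C(Y+Z)+\text{l.o.t.}$, and $\dot X\le C(X+Z)+\text{l.o.t.}$, where the lower-order remainders — notably those carrying $\Delta_x V$ and $\tfrac{\partial^2 V}{\partial x^2}$ — are estimated using \eqref{cond.V}: the bound $\Delta_x V\le c_1+\tfrac{c_2}{2}|\nabla_x V|^2$ with $c_2<1$ absorbs the terms produced by integration by parts in $p$, while $\bigl\|\tfrac{\partial^2 V}{\partial x^2}\bigr\|_F\le c_3(1+|\nabla_x V|)\lesssim V_0$ combined with the negative powers of $V_0$ in the weights lets one bound the Hessian terms by $\varepsilon X+C_\varepsilon Z$. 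The Poincar\'e inequality \eqref{Poincare M} controls the $p$-variance of $h$ by $Z$ and \eqref{Poincare rho} controls the remaining $x$-dependence through $X$, so that — possibly combined with Theorem \ref{main result} to handle the zeroth-order part — the system closes; choosing $c\ll b\ll a$ small enough then yields $\tfrac{d}{dt}\Phi[h(t)]\le-2\Lambda\,\Phi[h(t)]$ with an explicit $\Lambda>0$, and Gr\"onwall together with the norm equivalence $c'\Phi\le\|\cdot\|^2_{\mathscr{H}^1(\mathbb{R}^{2d},f_\infty)}\le C'\Phi$ gives the claim with $C_2=\sqrt{C'/c'}$.

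The hard part will be the bookkeeping in the differentiation of $\Phi$: tracking every cross term generated when $\tfrac{d}{dt}$ hits the $x,p$-dependent weights $V_0^{-3}p_0^{-3}$, $V_0^{-1}p_0^{-1}$ and the matrices $I-\tfrac{p\otimes p}{p_0^2}$, $I+p\otimes p$, and confirming that each term which is neither antisymmetric nor manifestly of lower order can, after using \eqref{cond.V}, be dominated by $\varepsilon X+C_\varepsilon(Z+W)$ with $\varepsilon$ small. Since the relativistic matrix $D=(I+p\otimes p)/p_0$, its inverse and $\partial_p D$ are all unbounded in $p$, this is genuinely more involved than for the classical equation \eqref{KFP}, and it is exactly to absorb these $p_0$-growths (and the $V_0$-growth of $\nabla_x V$) that the weights in \eqref{H^1-norm} were tuned; the final delicate point is to verify that the hierarchy $c\ll b\ll a$ can be chosen simultaneously compatible both with the equivalence $\Phi\simeq\|\cdot\|^2_{\mathscr{H}^1}$ and with the strict exponential decay.
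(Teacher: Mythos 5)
Your strategy is essentially the one the paper uses: build a Lyapunov functional that is a quadratic form in $(h,\nabla_x h,\nabla_p h)$ with exactly the $V_0,p_0$-weighted blocks of \eqref{H^1-norm} plus a cross term, show it is equivalent to $\|\cdot\|^2_{\mathscr{H}^1(\mathbb{R}^{2d},f_\infty)}$ by Young's inequality, and run Gr\"onwall. The paper packages the first-order part as $\mathrm{S}_P[h]=\int u^T P u\,f_\infty$ with the matrix $P$ of \eqref{Pmat}, whose off-diagonal block $\varepsilon^2 V_0^{-2}p_0^{-2}I$ is the geometric mean of your diagonal weights (your tentative $w=V_0^{-1}p_0^{-2}$ is off by a factor of $V_0$); the hierarchy $c\ll b\ll a$ you propose corresponds to the $\varepsilon,\varepsilon^2,\varepsilon^3$ powers in $P$. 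The estimates you sketch via commutators $[\nabla_p,\mathcal{T}]$, $[\nabla_x,\mathcal{T}]$ and the hypotheses \eqref{cond.V} are precisely what the paper does in Lemma \ref{lemma main} and the matrix lemmas \ref{heavy comp}--\ref{heavy comp2}, only phrased as positivity of a $2d\times 2d$ matrix rather than a closed system of four scalar inequalities.

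The one point you should not leave as a parenthetical ``possibly'' is the role of Theorem \ref{main result}. Your $X$-term carries the weight $V_0^{-3}p_0^{-3}$, which degenerates as $|x|\to\infty$; consequently the Poincar\'e inequality \eqref{Poincare rho} does \emph{not} let you bound $\int(\Pi h)^2\rho_\infty\,dx$ by $X$ --- you would need a weighted Poincar\'e with weight $V_0^{-3}$ on the gradient side, which is strictly stronger than \eqref{Poincare rho} and does not follow from Assumption \ref{Assumptions i-ii} (for polynomial $V$ the weight decays). So your system $\dot W\leq -2Z$, $\dot Z\leq\ldots$, $\dot Y\leq -\alpha X+\ldots$, $\dot X\leq\ldots$ does not close by Poincar\'e alone. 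The paper resolves this by putting the DMS functional $\mathrm{H}_\delta$ (whose exponential decay is exactly Theorem \ref{main result}) \emph{into} the Lyapunov functional, $\mathrm{E}=\gamma\|h\|^2_{L^2}+\mathrm{H}_\delta[h]+\mathrm{S}_P[h]$, so that the term $-\tfrac{\lambda(2-\delta)}{2}\int h^2 f_\infty$ coming from $\tfrac{d}{dt}\mathrm{H}_\delta$ supplies the missing zeroth-order coercivity and only the positive-definiteness of a matrix $P_1\geq CP$ remains to be checked. If you make that inclusion definite rather than tentative, your outline becomes a faithful reorganization of the paper's proof.
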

Our next result is about the estimates on the hypoelliptic regularization:
\begin{theorem}[\textbf{Hypoelliptic regularity from $L^2(\mathbb{R}^{2d}, f_{\infty})$ to  $\mathscr{H}^1(\mathbb{R}^{2d}, f_{\infty})$}]\label{th:hypoellipticity in H^1}
Assume $\frac{f_0}{f_{\infty}}\in L^2(\mathbb{R}^{2d}, f_{\infty})$ and   that there exists a constant $c_3>0$ such that  \begin{equation*}
 \left|\left|\frac{\partial^2 V(x)}{\partial x^2}\right|\right|_F\leq c_3(1+|\nabla_x V(x)|), \, \, \, \, \forall\, x\in \mathbb{R}^d.
\end{equation*} Then, for any $t_0>0,$  there are explicitly computable constants $C_3>0$ and $C_4>0$ (independent of $f_0$)  such that
\begin{equation}\label{hypel11}
\int_{ \mathbb{R}^{2d}}\frac{1}{V_0^3(x)p_0^3}\nabla^T_x \left(\frac{f(t)}{f_{\infty}}\right)\left(I-\frac{p\otimes p}{p_0^2}\right)\nabla_x \left(\frac{f(t)}{f_{\infty}}\right)f_{\infty}dxdp\leq \frac{C_3}{t^3}\int_{\mathbb{R}^{2d}}\left(\frac{f_0-f_{\infty}}{f_{\infty}}\right)^2f_{\infty}dxdp
\end{equation} and
\begin{equation}\label{hypoel21}
\int_{ \mathbb{R}^{2d}}\frac{1}{V_0(x)p_0}\nabla^T_p \left(\frac{f(t)}{f_{\infty}}\right)\left(I+p\otimes p\right)\nabla_p \left(\frac{f(t)}{f_{\infty}}\right)f_{\infty}dxdp\leq \frac{C_4}{t}\int_{ \mathbb{R}^{2d}}\left(\frac{f_0-f_{\infty}}{f_{\infty}}\right)^2f_{\infty}dxdp 
\end{equation} 
hold for all $t \in (0,t_0].$ In particular,  
\begin{equation}\label{hypoel3}
\left|\left|\frac{f(t)-f_{\infty}}{f_{\infty}}\right|\right|_{\mathscr{H}^1(\mathbb{R}^{2d}, f_{\infty})}\leq \frac{(C_3+C_4 t_0^2)^{1/2}}{t^{3/2}}  \left|\left|\frac{f_0-f_{\infty}}{f_{\infty}}\right|\right|_{L^2(\mathbb{R}^{2d}, f_{\infty})}
\end{equation}
holds for all $t \in (0,t_0].$
\end{theorem}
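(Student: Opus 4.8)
The plan is to derive the hypoelliptic estimates \eqref{hypel11}--\eqref{hypoel21} via a short-time Lyapunov functional of the form
\begin{equation*}
\mathcal{L}(t)\colonequals \|h(t)\|^2_{L^2(\mathbb{R}^{2d},f_\infty)} + a\, t\, \langle \text{(}p\text{-gradient term)}\rangle + b\, t^2\, \langle \text{(}x\text{-}p\text{ cross term)}\rangle + c\, t^3\, \langle \text{(}x\text{-gradient term)}\rangle,
\end{equation*}
where $h\colonequals f/f_\infty$ and the bracketed quantities are the two anisotropic Dirichlet forms appearing on the left of \eqref{hypel11} and \eqref{hypoel21}, together with a cross term $\int \frac{1}{V_0^2 p_0^2}(\nabla_x h\cdot\nabla_p h)\,f_\infty$ (or a suitably weighted variant) needed to close the estimate. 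This is the standard Villani/Hérau-type scheme adapted to the relativistic weights already used in the definition of $\mathscr{H}^1$: the powers $t, t^2, t^3$ match the scaling heuristics of the kinetic Fokker-Planck operator (one $p$-diffusion, then commutator with transport gives $x$-regularization at order $t^3$). First I would rewrite \eqref{Eq} for $h$ as $\partial_t h + \mathsf{T}h = \mathsf{L}h$, where $\mathsf{T}h = \frac{p}{p_0}\cdot\nabla_x h - \nabla_x V\cdot\nabla_p h$ is the (skew-adjoint in $L^2(f_\infty)$) transport part and $\mathsf{L}h = \frac{1}{M}\operatorname{div}_p(M D\nabla_p h)$ is the (self-adjoint, nonpositive) Maxwell-Jüttner Fokker-Planck operator in $p$, so that $\frac{d}{dt}\|h\|^2 = -2\int \nabla_p^T h\, D\, \nabla_p h\, f_\infty \le 0$.

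The key steps, in order: (1) Compute $\frac{d}{dt}$ of each Dirichlet-form term along the flow. Each derivative produces a "good" dissipation term (of the right type, one order of derivative higher in $p$) plus "error" terms, the dangerous ones being commutators of $\nabla_x$, $\nabla_p$ with $\mathsf{T}$ — in particular $[\nabla_p,\mathsf{T}]$ generates an $x$-derivative (this is the hypoelliptic mechanism), while $[\nabla_x,\mathsf{T}]$ generates a term with $\partial^2 V$, which is precisely why the hypothesis $\|\partial^2 V\|_F \le c_3(1+|\nabla_x V|)$ is imposed: it lets one control the $x$-Hessian by the weight $V_0$ already built into the norm. (2) Track the $p_0$ and $V_0$ weights carefully: the relativistic velocity $p/p_0$ is bounded and its derivatives decay, and $D(p)=(I+p\otimes p)/p_0$ together with the weights $\frac{1}{p_0}(I+p\otimes p)$ and $\frac{1}{p_0^3}(I-\frac{p\otimes p}{p_0^2})$ are designed so that the commutator errors are absorbed — I would verify the algebraic identities relating these matrices (e.g. $D^{1/2}$ acting on the weighted gradients) in the Appendix-style lemmas. (3) Choose the constants $a,b,c>0$ small enough (in the order $c\ll b\ll a\ll 1$) and use Young's inequality so that, for $t\in(0,t_0]$, all error terms are dominated by the collected good dissipation terms, yielding $\frac{d}{dt}\mathcal{L}(t)\le 0$, hence $\mathcal{L}(t)\le \mathcal{L}(0^+) = \|h_0\|^2_{L^2(f_\infty)}$. (4) Read off \eqref{hypel11} and \eqref{hypoel21} from the definition of $\mathcal{L}$ by keeping only the $c\,t^3$ and $a\,t$ terms respectively, and combine with $\|h(t)\|_{L^2}\le\|h_0\|_{L^2}$ to get \eqref{hypoel3}; the mass-conservation normalization lets us replace $h_0$ by $h_0 - 1 = (f_0-f_\infty)/f_\infty$ throughout (the constant is in the kernel of $\mathsf{T}$ and $\mathsf{L}$).

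The main obstacle I expect is step (1)-(3) done simultaneously with the relativistic weights: unlike the classical KFP case, the coefficients $p/p_0$, $D(p)$, and the norm weights are all $p$-dependent and non-polynomial, so the commutators $[\nabla_x,\mathsf{T}]$, $[\nabla_p,\mathsf{T}]$ and the derivatives of the weights produce many cross terms with mixed $V_0$-powers and $p_0$-powers that must all be matched and absorbed; getting the weight exponents ($3$ versus $1$, and the $\frac{p\otimes p}{p_0^2}$ projector structure) exactly right is what makes the scheme close, and this is presumably why the authors chose that specific, slightly unusual norm for $\mathscr{H}^1$. A secondary technical point is justifying the computations rigorously (the a priori differentiations, integrations by parts, and finiteness of the integrals) — I would handle this by first proving the estimates for sufficiently regular/decaying solutions (or a regularized equation) and then passing to the limit by density, using the $L^2(f_\infty)$ contraction and the already-established well-posedness from \cite{AC}.
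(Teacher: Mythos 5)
Your proposal is correct and essentially coincides with the paper's proof: the paper packages your time-weighted functional as $\mathcal{E}[h]=\gamma\|h\|^2_{L^2(\mathbb{R}^{2d},f_\infty)}+\mathrm{S}_P[h]$ with a time-dependent block matrix $P(t,x,p)$ whose diagonal blocks carry the $t^3/(V_0^3p_0^3)(I-\frac{p\otimes p}{p_0^2})$ and $t/(V_0p_0)(I+p\otimes p)$ weights and whose off-diagonal block is the $t^2/(V_0^2p_0^2)$ cross term, then shows $\frac{d}{dt}\mathcal{E}\le 0$ on $[0,t_0]$ by absorbing the commutator errors into a positive-semidefinite matrix inequality (choosing $\varepsilon$ small and $\gamma$ large, which is exactly your $c\ll b\ll a$ hierarchy). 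The estimates \eqref{hypel11}--\eqref{hypoel21} then follow from the lower bound on $P$ together with $\mathcal{E}[h(t)]\le\mathcal{E}[h_0]=\gamma\|h_0\|^2_{L^2}$, precisely as you outline.
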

Theorem \ref{th:hypoellipticity in H^1} shows that, for any initial data $\frac{f_0}{f_{\infty}}\in L^2(\mathbb{R}^{2d}, f_{\infty}),$ the solution $\frac{f(t)}{f_{\infty}}\in \mathscr{H}^1(\mathbb{R}^{2d}, f_{\infty})$ for any time $t>0.$ Compared to Theorem \ref{main result} and Theorem \ref{decay in H^1}, we  do not require the validity of a  Poincar\'e inequality in Theorem \ref{th:hypoellipticity in H^1}.  Also  note that the regularization rates for the $x$ derivative and the $p$ derivative are not the same: the regularization rate  in the $p$ derivative is faster, as it also is for the kinetic Fokker-Planck equation \cite{Her.el., Vil.M, AT}. This difference is expected since \eqref{Eq} can be considered as a transport equation with respect to the $x$ variable and as a parabolic equation  with respect to the $p$ variable.

In Theorem \ref{decay in H^1} we assumed that the initial data $f_0 /f_{\infty}$ is in $\mathscr{H}^1(\mathbb{R}^{2d}, f_{\infty}).$ If we use the regularity estimates
from Theorem \ref{th:hypoellipticity in H^1}, this condition can be relaxed:
\begin{corollary}\label{corollary}
Assume $\frac{f_0}{f_{\infty}}\in L^2(\mathbb{R}^{2d}, f_{\infty})$ and $V$ satisfies Assumption \ref{Assumptions i-ii}. Then, for any $t_0>0,$  there is an explicitly computable constant $C_5>0$  (independent of $f_0$)  such that
\begin{equation*}
\left|\left|\frac{f(t)-f_{\infty}}{f_{\infty}}\right|\right|_{\mathscr{H}^1(\mathbb{R}^{2d}, f_{\infty})}\leq C_5 e^{-\Lambda t} \left|\left|\frac{f_0-f_{\infty}}{f_{\infty}}\right|\right|_{L^2(\mathbb{R}^{2d}, f_{\infty})}
\end{equation*}
holds for all $t\geq t_0>0,$ where $\Lambda>0$ is the constant appearing in Theorem \ref{decay in H^1}.   
\end{corollary}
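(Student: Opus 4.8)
The plan is to bootstrap, exactly as one does for the classical kinetic Fokker--Planck equation: use the instantaneous gain of $\mathscr{H}^1$-regularity from Theorem~\ref{th:hypoellipticity in H^1} to bring the solution into $\mathscr{H}^1(\mathbb{R}^{2d},f_{\infty})$ by time $t_0$, and then run the exponential decay of Theorem~\ref{decay in H^1} from that time onward. Concretely, fix $t_0>0$. Since $V$ satisfies Assumption~\ref{Assumptions i-ii}, in particular the Hessian bound required in Theorem~\ref{th:hypoellipticity in H^1} holds, so estimate \eqref{hypoel3} is available with that same $t_0$; evaluating it at $t=t_0$ (which is allowed, since it holds for all $t\in(0,t_0]$) gives
\[
\left\|\frac{f(t_0)-f_{\infty}}{f_{\infty}}\right\|_{\mathscr{H}^1(\mathbb{R}^{2d},f_{\infty})}\le \frac{(C_3+C_4t_0^2)^{1/2}}{t_0^{3/2}}\left\|\frac{f_0-f_{\infty}}{f_{\infty}}\right\|_{L^2(\mathbb{R}^{2d},f_{\infty})},
\]
with $C_3,C_4$ the explicit constants of Theorem~\ref{th:hypoellipticity in H^1}. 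In particular $f(t_0)/f_{\infty}\in\mathscr{H}^1(\mathbb{R}^{2d},f_{\infty})$.

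For the second step I would invoke global well-posedness and uniqueness for \eqref{Eq} (from \cite{AC}), which provides the semigroup property: for $t\ge t_0$, writing $t=t_0+s$ with $s\ge0$, the function $s\mapsto f(t_0+s)$ is the solution of \eqref{Eq} with initial datum $f(t_0)$. By conservation of mass \eqref{cons.mass} one has $\int_{\mathbb{R}^{2d}}f(t_0)=\int_{\mathbb{R}^{2d}}f_0=1$, so $f_{\infty}$ is still the relevant equilibrium, and since $f(t_0)/f_{\infty}\in\mathscr{H}^1(\mathbb{R}^{2d},f_{\infty})$, Theorem~\ref{decay in H^1} applies to this restarted Cauchy problem and yields
\[
\left\|\frac{f(t)-f_{\infty}}{f_{\infty}}\right\|_{\mathscr{H}^1(\mathbb{R}^{2d},f_{\infty})}\le C_2\,e^{-\Lambda(t-t_0)}\left\|\frac{f(t_0)-f_{\infty}}{f_{\infty}}\right\|_{\mathscr{H}^1(\mathbb{R}^{2d},f_{\infty})},\qquad t\ge t_0 .
\]

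Chaining the two displays gives, for all $t\ge t_0$,
\[
\left\|\frac{f(t)-f_{\infty}}{f_{\infty}}\right\|_{\mathscr{H}^1(\mathbb{R}^{2d},f_{\infty})}\le C_5\,e^{-\Lambda t}\left\|\frac{f_0-f_{\infty}}{f_{\infty}}\right\|_{L^2(\mathbb{R}^{2d},f_{\infty})},\qquad C_5:=C_2\,e^{\Lambda t_0}\,\frac{(C_3+C_4t_0^2)^{1/2}}{t_0^{3/2}},
\]
which is explicitly computable from the constants of Theorems~\ref{decay in H^1} and~\ref{th:hypoellipticity in H^1} and from $t_0$, and with the same rate $\Lambda$ as in Theorem~\ref{decay in H^1}. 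This is the asserted estimate. The argument is essentially bookkeeping once the two main theorems are in hand; the only points deserving a word of care are verifying that the hypotheses of Theorem~\ref{decay in H^1} are met by the restarted problem --- namely $f(t_0)/f_{\infty}\in\mathscr{H}^1(\mathbb{R}^{2d},f_{\infty})$, which is the content of the first step, and $\int_{\mathbb{R}^{2d}}f(t_0)=1$, which is \eqref{cons.mass} --- and I do not expect any genuine obstacle beyond this.
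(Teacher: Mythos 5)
Your proof is correct and matches the paper's own argument: evaluate the hypoelliptic estimate \eqref{hypoel3} at $t=t_0$ to get $f(t_0)/f_\infty\in\mathscr{H}^1(\mathbb{R}^{2d},f_\infty)$, then apply Theorem~\ref{decay in H^1} with the solution restarted at time $t_0$, yielding exactly $C_5=C_2 e^{\Lambda t_0}(C_3+C_4 t_0^2)^{1/2}/t_0^{3/2}$. The extra care you take in checking the restarted Cauchy problem's hypotheses (semigroup property, mass conservation) is sound but left implicit in the paper.
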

  
\begin{Remark}
If one  considers  \eqref{Eq}  on a torus as done in \cite{Cal}, our results also hold in this setting since the method which we use can  be adapted without  difficulty.

\end{Remark} 
\section{Exponential convergence in $L^2(\mathbb{R}^{2d}, f_{\infty})$}\label{sec:L2-conv}
\subsection{The first Lyapunov functional}

Let us consider the relativistic, spatially homogeneous Fokker-Planck equation
\begin{equation}\label{HFP}
\begin{cases}
\partial_t \varrho=\text{div}_{p}(D(p)\nabla_{p}\varrho+p\varrho), \, \, \, \, \, p \in \mathbb{R}^d, \, \, \, \, t>0,\\
\varrho_{|t=0}=\varrho_0,
\end{cases}
\end{equation}
with $\int_{\mathbb{R}^d} \varrho dp=1.$
This equation is a special case of \eqref{Eq} when the problem is independent of $x$ and $V=0.$ The unique normalized global equilibrium for this equation is $\displaystyle M(p)=\frac{e^{-\sqrt{1+|p|^2}}}{ \int_{\mathbb{R}^{d}}e^{-\sqrt{1+|p'|^2}}dp'}.$ The convergence $$\varrho(t)\to M\, \, \, \text{ as }\, \, t\to \infty$$ can be easily proven using the Poincar\'e inequality \eqref{Poincare M}: Noting that the right hand side of \eqref{HFP} equals $\displaystyle \text{div}_p\left(MD\nabla_p\left(\frac{\varrho}{M}\right)\right)$ we obtain
\begin{align*} 
\frac{d}{dt} \left|\left|\frac{\varrho(t)-M}{M}\right|\right|^2_{L^2(\mathbb{R}^d, M)}&=-2\int_{\mathbb{R}^d}\nabla^T_{p}\left(\frac{\varrho(t)}{M}\right)D \nabla_{p}\left(\frac{\varrho(t)}{M}\right) M dp\\ &\leq -2\kappa_1 \left|\left|\frac{\varrho(t)-M}{M}\right|\right|^2_{L^2(\mathbb{R}^d, M)},\, \, \, \, \, \, \, \, \, \, \, \, \, \, \, \, \, \, \, \, \, \, \, \, \, \, \, \, \, \,  \forall \,t>0.
\end{align*}
 By Gr\"onwall's lemma we obtain the exponential decay  
 \begin{equation}\label{p-decay}
 \left|\left|\frac{\varrho(t)-M}{M}\right|\right|_{L^2(\mathbb{R}^d, M)}\leq e^{-\kappa_1 t}\left|\left|\frac{\varrho_0-M}{M}\right|\right|_{L^2(\mathbb{R}^d, M)},  \, \, \, \, \, \, \, \, \, \forall\, t>0.
 \end{equation}

  On the contrary, we do not obtain easily such exponential decay for the relativistic, spatially  inhomogeneous Fokker-Planck equation \eqref{Eq}. As the Fokker-Planck operator on the right hand side of \eqref{Eq} acts only on the variable $p,$ we only have
     \begin{equation}\label{dtint}
\frac{d}{dt} \left|\left|\frac{f(t)-f_{\infty}}{f_{\infty}}\right|\right|^2_{L^2(\mathbb{R}^{2d},f_{\infty})}=-2\int_{\mathbb{R}^{2d}}\nabla^T_{p}\left(\frac{f(t)}{f_{\infty}}\right)D \nabla_{p}\left(\frac{f(t)}{M}\right) f_{\infty}dxdp\leq 0.
\end{equation}
The integral on the right hand side only gives information on the $p-$derivative and it is lacking information on the $x-$derivatives. Hence,  in general, the integral on the right hand side of \eqref{dtint} is not  bigger than  \begin{small}$\displaystyle 2\lambda  \left|\left|\frac{f(t)-f_{\infty}}{f_{\infty}}\right|\right|^2_{L^2(\mathbb{R}^{2d},f_{\infty})}$ \end{small}for some $\lambda>0.$

 The idea to overcome this difficulty  is to construct an appropriate {Lyapunov functional}   which is equivalent to the $L^2-$norm and satisfies a Gr\"onwall type differential inequality under the evolution of the solution $f(t).$
A method in Hilbert spaces was introduced by Dolbeault, Mouhot, and Schmeiser in \cite{dolbeault2009hypocoercivity, dolbeault2015hypocoercivity} for proving exponential stability for a large class of linear kinetic models confined by an external potential. We will apply this method to \eqref{Eq}  to obtain our results. In the following we explain this method for linear kinetic equations of the form
\begin{equation}\label{abs.kin.eq}
\partial_t f+\mathrm{T}f=\mathrm{L}f, \,\,\, t>0
\end{equation} 
in a Hilbert space $\mathcal{H}$ with initial data $f_{|t=0}=f_0\in \mathcal{H}. $  Here, $\mathrm{T}$ and $\mathrm{L}$ are closed linear operators such that $\mathrm{L}-\mathrm{T}$ generates the strongly continuous semigroup $e^{(\mathrm{L}-\mathrm{T})t}$ on $\mathcal{H}.$ Let $\mathrm{I}$ be the identity operator, $\Pi$ be the orthogonal projection on the null space $\mathcal{N}(\mathrm{L})$ of $\mathrm{L}.$ The domains of $\mathrm{T}$ and $\mathrm{L}$ are denoted by $\mathcal{D}(\mathrm{T})$ and $\mathcal{D}(\mathrm{L}),$ respectively. We define the operator 
\begin{equation}\label{A oper.}
\mathrm{A}f\colonequals (\mathrm{I}+(\mathrm{T}\Pi)^{*}\mathrm{T}\Pi)^{-1}(\mathrm{T}\Pi)^{*}f
\end{equation}
 and the functional 
\begin{equation}\label{DMS funct}
\mathrm{H}_{\delta}[f]\colonequals \frac{1}{2}||f||^2+\delta \langle \mathrm{A}f,f\rangle,  \, \, \, \, \delta>0,
\end{equation}
where $\langle \cdot, \cdot \rangle$ denotes the scalar product in $\mathcal{H},$ and  $||\cdot||$ denotes the norm on $\mathcal{H}$ associated with the scalar product. 
 We assume the following conditions are satisfied:
\begin{itemize}
\item ({\it microscopic coercivity}) $\mathrm{L}$ is symmetric and there exists $\lambda_m>0$ such that 
\begin{equation}\label{mic.coer}
-\langle \mathrm{L}f, f\rangle \geq \lambda_m ||(\mathrm{I}-\Pi)f||^2 \, \, \, \, \text{ for all } \,\, f \in  \mathcal{D}(\mathrm{L}).
\end{equation}
\item ({\it macroscopic coercivity}) $\mathrm{T}$ is skew symmetric and there exists $\lambda_M>0$ such that 
\begin{equation}\label{mac.coer}
||\mathrm{T} \Pi f||^2 \geq \lambda_M ||\Pi f||^2 \, \, \, \, \text{ for all } \,\, f \in \mathcal{H}\,\, \text{ with }\, \, \Pi f \in  \mathcal{D}(\mathrm{T}).
\end{equation}
\item \begin{equation}\label{pr.tr.pr}
\Pi \mathrm{T}\Pi=0.
\end{equation}
\item (\textit{boundedness of auxiliary operators}) The operators $\mathrm{A}\mathrm{T}(\mathrm{I}-\Pi)$ and $\mathrm{A} \mathrm{L}$ are bounded, and there exists a constant $C_M>0$ such that, for all $f \in \mathcal{H},$
\begin{equation}\label{boun.aux}
||\mathrm{A}\mathrm{T}(\mathrm{I}-\Pi)f||+||\mathrm{A} \mathrm{L}f||\leq C_M ||(\mathrm{I}-\Pi)f||.
\end{equation}
\end{itemize}
We define \begin{equation}\label{delta0}\delta_0\colonequals \min\left\{2, \lambda_m, \frac{4\lambda_m \lambda_M}{4\lambda_M+C^2_M(1+\lambda_M)}\right\}. 
\end{equation}
 Under the validity of these conditions and for $\delta\in (0,\delta_0),$ one can show that $\mathrm{H}_{\delta}$  is a {Lypunov functional} for \eqref{abs.kin.eq} and it decays exponentially:       
\begin{theorem}[{\cite[Theorem 2]{dolbeault2015hypocoercivity}}]\label{DMS}
Assume  \eqref{mic.coer}-\eqref{boun.aux} are satisfied and $\delta\in (0,\delta_0).$ Then,
\begin{enumerate}[~~i)]
\item  $\mathrm{H}_{\delta}$ and $||\cdot||^2$ are equivalent, more precisely,
\begin{equation*}
\frac{2-\delta}{4}||f||^2\leq \mathrm{H}_{\delta}[f] \leq \frac{2+\delta}{4}||f||^2 \, \, \, \, \text{ for all }  f \in \mathcal{H}.
\end{equation*} 
\item   There exists a positive constant $\lambda,$ which is computable in terms of $\lambda_m,$ $\lambda_M,$ and $C_M,$ such that, for any initial data $f_0 \in \mathcal{H},$ 
\begin{equation*}
\frac{d}{dt}\mathrm{H}_{\delta}[e^{(\mathrm{L}-\mathrm{T})t}f_0]\leq -2\lambda \mathrm{H}_{\delta}[e^{(\mathrm{L}-\mathrm{T})t}f_0], \, \, \, \, t>0. 
\end{equation*} 
Consequently, we have 
\begin{equation}\label{|| decay}
||e^{(\mathrm{L}-\mathrm{T})t}f_0||\leq \sqrt{\frac{2+\delta}{2-\delta}}e^{-\lambda t}||f_0||\, \, \, \, \text{ for all } t\geq 0.
\end{equation}
\end{enumerate}
\end{theorem}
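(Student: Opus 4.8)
Since this is precisely the abstract hypocoercivity theorem of Dolbeault, Mouhot and Schmeiser, the plan is to reproduce their argument, which rests on a handful of algebraic identities for the operator $\mathrm{A}$ in \eqref{A oper.} followed by a Grönwall estimate for $\mathrm{H}_\delta$ along the flow. First I would record the structural properties of $\mathrm{A}$. Writing $\mathrm{B}\colonequals(\mathrm{T}\Pi)^{*}\mathrm{T}\Pi\ge 0$, one has $\mathrm{A}=(\mathrm{I}+\mathrm{B})^{-1}(\mathrm{T}\Pi)^{*}$. Symmetry of $\mathrm{L}$ makes $\Pi$ the orthogonal projection onto $\mathcal{N}(\mathrm{L})$, so $\mathrm{L}\Pi=0$ and $\mathrm{L}=\mathrm{L}(\mathrm{I}-\Pi)$; skew-symmetry of $\mathrm{T}$ together with $\Pi=\Pi^{*}$ gives $(\mathrm{T}\Pi)^{*}=-\Pi\mathrm{T}$, and combining with \eqref{pr.tr.pr} yields $(\mathrm{T}\Pi)^{*}f=-\Pi\mathrm{T}(\mathrm{I}-\Pi)f$, hence $\mathrm{A}=\mathrm{A}(\mathrm{I}-\Pi)$. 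Because $\mathrm{B}$ and $(\mathrm{T}\Pi)^{*}$ map into $\mathrm{Ran}\,\Pi=\mathcal{N}(\mathrm{L})$ and vanish on $(\mathrm{Ran}\,\Pi)^{\perp}$, the resolvent $(\mathrm{I}+\mathrm{B})^{-1}$ preserves $\mathrm{Ran}\,\Pi$, so $\mathrm{A}=\Pi\mathrm{A}$ and $\mathrm{L}\mathrm{A}=0$. Testing the defining identity $\mathrm{A}f+\mathrm{B}\mathrm{A}f=-\Pi\mathrm{T}(\mathrm{I}-\Pi)f$ against $\mathrm{A}f$ (using $\langle\mathrm{B}\mathrm{A}f,\mathrm{A}f\rangle=\|\mathrm{T}\mathrm{A}f\|^{2}$, Cauchy--Schwarz and Young) will give $\|\mathrm{A}f\|\le\tfrac12\|(\mathrm{I}-\Pi)f\|$ and $\|\mathrm{T}\mathrm{A}f\|\le\|(\mathrm{I}-\Pi)f\|$; and $(\mathrm{I}+\mathrm{B})^{-1}\mathrm{B}=\mathrm{I}-(\mathrm{I}+\mathrm{B})^{-1}$ on $\mathrm{Ran}\,\Pi$ together with \eqref{mac.coer} (i.e. $\mathrm{B}\ge\lambda_{M}$ there) will give the crucial lower bound $\langle\mathrm{A}\mathrm{T}\Pi f,f\rangle\ge\frac{\lambda_{M}}{1+\lambda_{M}}\|\Pi f\|^{2}$.

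For part i) I would use $\mathrm{A}=\Pi\mathrm{A}(\mathrm{I}-\Pi)$ and $\|\mathrm{A}f\|\le\tfrac12\|(\mathrm{I}-\Pi)f\|$ to get $|\langle\mathrm{A}f,f\rangle|=|\langle\mathrm{A}(\mathrm{I}-\Pi)f,\Pi f\rangle|\le\tfrac12\|(\mathrm{I}-\Pi)f\|\,\|\Pi f\|\le\tfrac14\|f\|^{2}$, since $\|f\|^{2}=\|(\mathrm{I}-\Pi)f\|^{2}+\|\Pi f\|^{2}$; inserting this into $\mathrm{H}_\delta[f]=\tfrac12\|f\|^{2}+\delta\langle\mathrm{A}f,f\rangle$ gives $\tfrac{2-\delta}{4}\|f\|^{2}\le\mathrm{H}_\delta[f]\le\tfrac{2+\delta}{4}\|f\|^{2}$ for every $\delta\in(0,2)$.

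For part ii) I would differentiate along $f(t)=e^{(\mathrm{L}-\mathrm{T})t}f_0$, $\dot f=\mathrm{L}f-\mathrm{T}f$, using skew-symmetry of $\mathrm{T}$, symmetry of $\mathrm{L}$, $\mathrm{L}\mathrm{A}=0$ and $\langle\mathrm{A}f,\mathrm{T}f\rangle=-\langle\mathrm{T}\mathrm{A}f,f\rangle=-(\|\mathrm{A}f\|^2+\|\mathrm{T}\mathrm{A}f\|^2)$, to obtain
\[
\tfrac{d}{dt}\mathrm{H}_\delta[f]=\langle\mathrm{L}f,f\rangle+\delta\langle\mathrm{A}\mathrm{L}f,f\rangle-\delta\langle\mathrm{A}\mathrm{T}\Pi f,f\rangle-\delta\langle\mathrm{A}\mathrm{T}(\mathrm{I}-\Pi)f,f\rangle+\delta\big(\|\mathrm{A}f\|^{2}+\|\mathrm{T}\mathrm{A}f\|^{2}\big).
\]
Then I would bound the five terms via the first paragraph: $\langle\mathrm{L}f,f\rangle\le-\lambda_m\|(\mathrm{I}-\Pi)f\|^2$ by \eqref{mic.coer}; $-\delta\langle\mathrm{A}\mathrm{T}\Pi f,f\rangle\le-\delta\frac{\lambda_M}{1+\lambda_M}\|\Pi f\|^2$; the two mixed terms $\le\delta C_M\|(\mathrm{I}-\Pi)f\|\,\|\Pi f\|$ each, using $\mathrm{A}=\Pi\mathrm{A}$, $\mathrm{A}\mathrm{L}=\mathrm{A}\mathrm{L}(\mathrm{I}-\Pi)$ and \eqref{boun.aux}; and $\delta(\|\mathrm{A}f\|^2+\|\mathrm{T}\mathrm{A}f\|^2)\le\tfrac{5\delta}{4}\|(\mathrm{I}-\Pi)f\|^2$. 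Absorbing the cross terms by Young's inequality gives $\tfrac{d}{dt}\mathrm{H}_\delta[f]\le-\big(\lambda_m-c_\delta\big)\|(\mathrm{I}-\Pi)f\|^2-\tfrac{\delta}{2}\tfrac{\lambda_M}{1+\lambda_M}\|\Pi f\|^2$ with $c_\delta\to0$ as $\delta\to0$; choosing the Young weights as in the DMS proof makes both coefficients positive exactly for $\delta<\delta_0$ as in \eqref{delta0}. The right-hand side is then $\le-c\|f\|^2$ with an explicit $c=c(\lambda_m,\lambda_M,C_M)>0$, and combining with $\mathrm{H}_\delta[f]\le\tfrac{2+\delta}{4}\|f\|^2$ yields $\tfrac{d}{dt}\mathrm{H}_\delta[f]\le-2\lambda\mathrm{H}_\delta[f]$ with $\lambda\colonequals\tfrac{2c}{2+\delta}$; Grönwall's lemma together with the equivalence of part i) then gives \eqref{|| decay}.

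The main obstacle is not any single estimate but making the computation in part ii) rigorous: it uses $f(t),\mathrm{A}f(t)\in\mathcal{D}(\mathrm{L})\cap\mathcal{D}(\mathrm{T})$ and differentiability of $t\mapsto\langle\mathrm{A}f(t),f(t)\rangle$, which are only formal for general $f_0\in\mathcal{H}$. I would handle this by first running the argument for $f_0$ in a dense, semigroup-invariant subspace (e.g. $\mathcal{D}((\mathrm{L}-\mathrm{T})^2)$, or after a regularisation), and then passing to the limit using the contractivity $\|e^{(\mathrm{L}-\mathrm{T})t}\|\le1$ (which follows from $\langle(\mathrm{L}-\mathrm{T})f,f\rangle\le0$), the boundedness of $\mathrm{A}$, $\mathrm{A}\mathrm{L}$ and $\mathrm{A}\mathrm{T}(\mathrm{I}-\Pi)$, and the norm equivalence from part i).
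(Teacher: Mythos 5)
The paper does not prove Theorem~\ref{DMS}; it is quoted verbatim from Dolbeault--Mouhot--Schmeiser \cite[Theorem 2]{dolbeault2015hypocoercivity} and used as a black box. Your proposal reproduces the DMS argument, and the structural preparations (the identities $\mathrm{A}=\Pi\mathrm{A}=\mathrm{A}(\mathrm{I}-\Pi)$, $\mathrm{L}\mathrm{A}=0$, the bound $\langle\mathrm{A}\mathrm{T}\Pi f,f\rangle\ge\tfrac{\lambda_M}{1+\lambda_M}\|\Pi f\|^2$ via $(\mathrm{I}+\mathrm{B})^{-1}\mathrm{B}=\mathrm{I}-(\mathrm{I}+\mathrm{B})^{-1}$, and the norms of $\mathrm{A}f$ and $\mathrm{T}\mathrm{A}f$), part i), and the time-derivative expansion in part ii) all match the original proof. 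The caveat about regularity of $t\mapsto\langle\mathrm{A}f(t),f(t)\rangle$ and the density argument is also exactly how DMS make the formal computation rigorous, so this is the same proof, not an alternative.

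One quantitative slip worth fixing: you estimate $\delta(\|\mathrm{A}f\|^2+\|\mathrm{T}\mathrm{A}f\|^2)\le\tfrac{5\delta}{4}\|(\mathrm{I}-\Pi)f\|^2$ by adding the separate bounds on $\|\mathrm{A}f\|$ and $\|\mathrm{T}\mathrm{A}f\|$, but this is looser than what the stated $\delta_0$ in \eqref{delta0} requires. The sharp bound follows directly from the same identity you already derived: $\|\mathrm{A}f\|^2+\|\mathrm{T}\mathrm{A}f\|^2=\langle\mathrm{T}\mathrm{A}f,(\mathrm{I}-\Pi)f\rangle\le\|\mathrm{T}\mathrm{A}f\|\,\|(\mathrm{I}-\Pi)f\|\le\|(\mathrm{I}-\Pi)f\|^2$, and it is this constant $1$ (not $5/4$) that, after Young's inequality on the two cross terms, produces the coefficient $\lambda_m-\delta$ on $\|(\mathrm{I}-\Pi)f\|^2$ and hence the condition $\delta<\lambda_m$ appearing in $\delta_0$. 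With your $5/4$ one would only get positivity for $\delta$ in a strictly smaller interval, so the claim that both coefficients are positive ``exactly for $\delta<\delta_0$'' would not follow.
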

This method has been successfully applied to study the long-time behavior of various linear kinetic models,  see \cite{dolbeault2015hypocoercivity, confint, Schmeis, VPFP}. In particular, in \cite[Theorem 10]{dolbeault2015hypocoercivity}, the exponential convergence $f(t)\to f_{\infty}$ in $L^2(\mathbb{R}^{2d}, f_{\infty})$ as $t\to \infty$ was proven for the classical kinetic Fokker-Planck equation \eqref{KFP}.
\subsection{Proof of Theorem \ref{main result}}
Let $\displaystyle h\colonequals \frac{f-f_{\infty}}{f_{\infty}}.$ Then \eqref{Eq} can be written as \begin{equation}\label{Eqh}
\begin{cases}
\partial_t h+\displaystyle  \frac{p}{p_0}\cdot \nabla_x h-\nabla_x V \cdot \nabla_p h=\frac{1}{f_{\infty}}\text{div}_{p}(D\nabla_p h f_{\infty})\\
\displaystyle h_{|t=0}=\frac{f_0-f_{\infty}}{f_{\infty}}.
\end{cases}
\end{equation}
We shall apply  Theorem \ref{DMS} to \eqref{Eqh}. To end we first  define an appropriate Hilbert space,
\begin{equation*}
\mathcal{H}\colonequals \left\{ h \in  L^2(\mathbb{R}^{2d},f_{\infty}):\int_{\mathbb{R}^{2d}}h f_{\infty} dxdp=0  \right\}
\end{equation*} with the scalar product
$
\langle h,g \rangle\colonequals \int_{\mathbb{R}^{2d}}{h_1h_2}{f_{\infty}}dxdp$ and the norm $  ||h||_{L^2(\mathbb{R}^{2d}, f_{\infty})}= \sqrt{\langle h,h \rangle}.$
We note that if $h(t)$ is the solution of \eqref{Eqh}, then the conservation of mass \eqref{cons.mass} shows $h(t)\in \mathcal{H}$ for all $t\geq  0.$
We can present \eqref{Eqh} in the form of \eqref{abs.kin.eq} with
\begin{equation}\label{T}
\mathrm{T}h\colonequals \frac{p}{p_0}\cdot\nabla_x h-\nabla_x V \cdot \nabla_p h
\end{equation}
and
\begin{equation}\label{L}
\mathrm{L}h\colonequals \frac{1}{f_{\infty}}\text{div}_{p}(D\nabla_p h f_{\infty}).
\end{equation} 
$\mathrm{T}$ and   $\mathrm{L}$ can be defined  in the space of smooth functions with  compact support. These operators can be extended using the Friederichs extension, but we omit details concerning domain issues and extensions as we need only properties that apply to solutions of the
evolution problem \eqref{Eqh}.

 We define 
\begin{equation}\label{P}
\Pi h=\Pi h(x) \colonequals \int_{\mathbb{R}^d} h(x,p')M(p') dp', \, \, \, \, \, \, \, h \in \mathcal{H}.
\end{equation}
It is easy to check that $\Pi$ is a symmetric operator in $\mathcal{H}$ and $\Pi\circ \Pi=\Pi.$

In the following proposition we show that the operators defined in \eqref{T}-\eqref{P}  satisfy the conditions \eqref{mic.coer}-\eqref{pr.tr.pr}  in $\mathcal{H}:$
\begin{proposition}\label{proporties}
Assume that Assumption \ref{Assumptions i-ii} holds. Then we have 
\begin{enumerate}[~~i)]
\item $\mathrm{T}$ and $\mathrm{L}$ are, respectively, skew-symmetric and symmetric operators in $\mathcal{H}.$
\item  $\Pi$ is  the orthogonal projection on the null space $\mathcal{N}(\mathrm{L})$ of  $\mathrm{L}.$
Microscopic coercivity \eqref{mic.coer} holds with $\lambda_m=\kappa_1,$ where $\kappa_1$ is the constant appearing in the Poincar\'e inequality \eqref{Poincare M}.
\item Macroscopic coercivity \eqref{mac.coer}  holds with $\lambda_M=\kappa_2\left( \int_{\mathbb{R}^d}\frac{1}{(1+|p|^2)^{3/2}}Mdp\right)^{-1},$ where $\kappa_2$ is the constant in the Poincar\'e inequality \eqref{Poincare rho}.
\item $\Pi \mathrm{T} \Pi=0.$
\end{enumerate}
\end{proposition}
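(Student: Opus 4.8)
The plan is to verify the four assertions directly from the product structure of the equilibrium, $f_\infty(x,p)=\rho_\infty(x)M(p)$, relying throughout on the two elementary identities
$\nabla_x\ln\rho_\infty=-\nabla_xV$ and $\nabla_p\ln M=-p/p_0$ (the latter because $p/p_0=\nabla_p p_0$ and $M\propto e^{-p_0}$), which are immediate from \eqref{star1}.

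\textbf{Part i).} I would rewrite $\mathrm Th=\tfrac1{f_\infty}\bigl(\mathrm{div}_x(\tfrac{p}{p_0}hf_\infty)-\mathrm{div}_p(\nabla_xV\,hf_\infty)\bigr)$; this is legitimate because the transport field $(\tfrac{p}{p_0},-\nabla_xV)$ is divergence free with respect to the weight $f_\infty$, i.e.
$\mathrm{div}_x(\tfrac{p}{p_0}f_\infty)-\mathrm{div}_p(\nabla_xVf_\infty)=\bigl(\tfrac{p}{p_0}\cdot(-\nabla_xV)-\nabla_xV\cdot(-\tfrac{p}{p_0})\bigr)f_\infty=0$. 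Integrating $\langle\mathrm Th,g\rangle$ by parts (in $x$ for the first term, in $p$ for the second) and invoking this cancellation yields $\langle\mathrm Th,g\rangle=-\langle h,\mathrm Tg\rangle$, so $\mathrm T$ is skew symmetric on $\mathcal H$. For $\mathrm L$, one integration by parts in $p$ gives $\langle\mathrm Lh,g\rangle=-\int_{\mathbb{R}^{2d}}(\nabla_pg)^TD\,\nabla_ph\,f_\infty\,dxdp$; symmetry of $\mathrm L$ is then symmetry of $D$, and taking $g=h$ gives $-\langle\mathrm Lh,h\rangle=\int_{\mathbb{R}^{2d}}\nabla_p^Th\,D\,\nabla_ph\,f_\infty\,dxdp\ge0$, since $D$ is positive definite.

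\textbf{Parts ii) and iv).} From the last formula, $\mathrm Lh=0$ forces $\nabla_ph=0$ a.e.\ (positive definiteness of $D$), so $\mathcal N(\mathrm L)=\{h\in\mathcal H:h=h(x)\}$. Since $\int_{\mathbb{R}^d}M\,dp=1$, the operator $\Pi$ of \eqref{P} fixes such functions, is self-adjoint and idempotent (as noted in the text) and has range $\mathcal N(\mathrm L)$; hence it is the orthogonal projection onto $\mathcal N(\mathrm L)$ and $\|h\|^2=\|\Pi h\|^2+\|(\mathrm I-\Pi)h\|^2$. Microscopic coercivity then follows by applying the Poincaré inequality \eqref{Poincare M} in $p$ for a.e.\ fixed $x$ to the slice $h(x,\cdot)$ — whose $M$-average is exactly $\Pi h(x)$ — and integrating against $\rho_\infty\,dx$:
\[
-\langle\mathrm Lh,h\rangle=\int_{\mathbb{R}^d}\rho_\infty\Bigl(\int_{\mathbb{R}^d}\nabla_p^Th\,D\,\nabla_ph\,M\,dp\Bigr)dx\ \ge\ \kappa_1\int_{\mathbb{R}^d}\rho_\infty\Bigl(\int_{\mathbb{R}^d}h^2M\,dp-(\Pi h)^2\Bigr)dx=\kappa_1\|(\mathrm I-\Pi)h\|^2 ,
\]
so $\lambda_m=\kappa_1$. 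For \eqref{pr.tr.pr}: since $\Pi g$ depends on $x$ only, $\nabla_p\Pi g=0$ and $\mathrm T\Pi g=\tfrac{p}{p_0}\cdot\nabla_x\Pi g$, whence $\Pi\mathrm T\Pi g=\bigl(\int_{\mathbb{R}^d}\tfrac{p}{p_0}M\,dp\bigr)\cdot\nabla_x\Pi g=0$ because $M$ is even and $p/p_0$ odd.

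\textbf{Part iii).} Using $\mathrm T\Pi h=\tfrac{p}{p_0}\cdot\nabla_x\Pi h$ once more,
\[
\|\mathrm T\Pi h\|^2=\int_{\mathbb{R}^d}\rho_\infty\,\nabla_x^T(\Pi h)\Bigl(\int_{\mathbb{R}^d}\frac{p\otimes p}{p_0^2}M\,dp\Bigr)\nabla_x(\Pi h)\,dx .
\]
By rotational invariance of $M$ the inner matrix equals $c_\ast I$ with $c_\ast=\int_{\mathbb{R}^d}\tfrac{p_1^2}{p_0^2}M\,dp$; integration by parts in $p_1$ (using $\partial_{p_1}M=-\tfrac{p_1}{p_0}M$) gives $c_\ast=\int_{\mathbb{R}^d}p_0^{-1}M\,dp-\int_{\mathbb{R}^d}p_1^2p_0^{-3}M\,dp$, and bounding $p_1^2\le|p|^2=p_0^2-1$ yields the explicit lower bound $c_\ast\ge\int_{\mathbb{R}^d}(1+|p|^2)^{-3/2}M\,dp>0$. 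Finally $h\in\mathcal H$ gives $\int_{\mathbb{R}^d}\Pi h\,\rho_\infty\,dx=\int_{\mathbb{R}^{2d}}hf_\infty\,dxdp=0$, so the Poincaré inequality \eqref{Poincare rho} applied to $\Pi h$ gives $\int_{\mathbb{R}^d}\rho_\infty|\nabla_x\Pi h|^2\,dx\ge\kappa_2\|\Pi h\|^2$; hence macroscopic coercivity holds with $\lambda_M=c_\ast\kappa_2\ge\kappa_2\int_{\mathbb{R}^d}(1+|p|^2)^{-3/2}M\,dp$.

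\textbf{Expected difficulty.} None of the steps is deep — they are routine integrations by parts and two fiberwise applications of the given Poincaré inequalities. The two points needing genuine care are (a) the divergence-free structure of the transport field with respect to $f_\infty$, which is exactly what makes $\mathrm T$ skew symmetric and hinges on the relativistic velocity $p/p_0$ equalling $-\nabla_p\ln M$; and (b) correctly slicing in $(x,p)$ when invoking the Poincaré inequalities, in particular identifying $\int hM\,dp$ with $\Pi h$ and verifying that $\Pi h$ has zero $\rho_\infty$-mean. Closability/domain questions for $\mathrm T$ and $\mathrm L$ are deliberately set aside, as announced after \eqref{L}.
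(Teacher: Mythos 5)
Your proof is correct and follows essentially the same route as the paper's: in i) you rewrite $\mathrm{T}$ as a weighted divergence and exploit that the transport field is divergence free with respect to $f_\infty$ (the paper expresses the same cancellation via $\nabla_xf_\infty=-\nabla_xVf_\infty$, $\nabla_pf_\infty=-\tfrac{p}{p_0}f_\infty$); in ii) both proofs apply \eqref{Poincare M} slice-wise in $p$; in iv) you use the parity of $p/p_0$ whereas the paper integrates by parts in $p$ and observes $\int\nabla_pM\,dp=0$ — cosmetic differences. In iii) the paper proceeds via a pointwise matrix inequality: it writes $\|\mathrm{T}\Pi h\|^2=\int\nabla_x^T\Pi h\,\tfrac{\partial^2p_0}{\partial p^2}\nabla_x\Pi h\,f_\infty$, uses $\tfrac{\partial^2p_0}{\partial p^2}=\tfrac1{p_0}\bigl(I-\tfrac{p\otimes p}{p_0^2}\bigr)\ge\tfrac1{p_0^3}I$ and then integrates; you instead invoke rotational invariance to reduce $\int\tfrac{p\otimes p}{p_0^2}M\,dp$ to $c_\ast I$ and bound $c_\ast$ by a scalar integration by parts. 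Both give the same lower bound $\int p_0^{-3}M\,dp$.

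One point worth flagging: your computation (and the paper's own proof) yields $\lambda_M=\kappa_2\int_{\mathbb{R}^d}(1+|p|^2)^{-3/2}M\,dp$, i.e.\ \emph{without} the inverse, whereas the proposition's statement displays $\lambda_M=\kappa_2\bigl(\int_{\mathbb{R}^d}(1+|p|^2)^{-3/2}M\,dp\bigr)^{-1}$. Since that integral is $<1$, the stated value would be a stronger (larger) constant than what the proof delivers; the exponent $-1$ in the statement appears to be a typographical slip, and your version is the correct one.
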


\begin{proof}
$i)$ Let  $h,\, g \in  \mathcal{H}$ be smooth functions with compact support.  The equations 
\begin{equation*}\label{grad. f_infty}
\nabla_x f_{\infty}=-\nabla_x V f_{\infty}, \, \, \, \, \, \, \, \, \nabla_p f_{\infty}=-\frac{p}{p_0}f_{\infty},
\end{equation*}
 and integration by parts yield
\begin{align*}
\langle \mathrm{T}h, g\rangle & =\int_{\mathbb{R}^{2d}} \left(\frac{p}{p_0}\cdot\nabla_x h-\nabla_x V \cdot \nabla_p h\right) gf_{\infty}dxdp\\
&=-\int_{\mathbb{R}^{2d}} \left(\frac{p}{p_0}\cdot\nabla_x g-\nabla_x V \cdot \nabla_p g\right) hf_{\infty}dxdp\\ & =-\langle h, \mathrm{T}g\rangle.
\end{align*}
Then, integrating by parts we show that $\mathrm{L}$ is symmetric:
\begin{equation}\label{L(h,g)}
\langle \mathrm{L}h, g\rangle =\int_{\mathbb{R}^{2d}} \text{div}_{p}(D\nabla_p h f_{\infty}) gdxdp
=-\int_{\mathbb{R}^{2d}} \nabla^{T}_p h  D \nabla_p g f_{\infty}dxdp=\langle h, \mathrm{L}g\rangle.
\end{equation}
\\

$ii)$
 As $D=D(p)$ is positive definite for all $p\in \mathbb{R}^d,$ \eqref{L(h,g)} implies
\begin{equation*}
\langle \mathrm{L}h, h\rangle =-\int_{\mathbb{R}^{2d}} \nabla^{T}_p h D \nabla_p h f_{\infty}dxdp\leq 0.
\end{equation*}
This shows that $\mathrm{L}h$ vanishes if  $h$ is constant with respect to $p,$ in particular $\mathrm{L} \Pi h=0.$
 Moreover, the Poincar\'e inequality \eqref{Poincare M} shows
\begin{align*}
-\langle \mathrm{L}h, h\rangle & \geq \kappa_1\int_{\mathbb{R}^{d}}\left(\int_{\mathbb{R}^{d}} h^2 Mdp-\left(\int_{\mathbb{R}^{d}} h M dp \right)^2\right)\rho_{\infty} dx\\
&=\kappa_1\int_{\mathbb{R}^{2d}}(h-\Pi h)^2f_{\infty}dxdp\\
&=\kappa_1||(\mathrm{I}-\Pi)h||^2_{L^2(\mathbb{R}^{2d}, f_{\infty})}.
\end{align*}
This justifies that $\Pi$ is the orthogonal projection on the null space $\mathcal{N}(\mathrm{L})$ of $\mathrm{L}.$\\

$iii)$ Using
$\displaystyle  \mathrm{T}\Pi h=\frac{p}{p_0}\cdot\nabla_x \Pi h$ we compute
\begin{align}\label{TP1}||\mathrm{T}\Pi h||^2_{L^2(\mathbb{R}^{2d}, f_{\infty})}& =\sum_{i,j=1}^{d}\int_{\mathbb{R}^{2d}}\frac{p_i p_j}{p^2_0}\partial_{x_i}\Pi h\, \partial_{x_j}\Pi h f_{\infty}dxdp \nonumber \\& =-\sum_{i,j=1}^{d}\int_{\mathbb{R}^{2d}}\partial_{p_i}{p_0}\partial_{x_i}\Pi h\, \partial_{x_j}\Pi h \partial_{p_j}f_{\infty}dxdp\nonumber \\&=\sum_{i,j=1}^{d}\int_{\mathbb{R}^{2d}}\partial^2_{p_i p_j}{p_0}\partial_{x_i}\Pi h\, \partial_{x_j}\Pi h f_{\infty}dxdp\nonumber \\ &=\int_{\mathbb{R}^{2d}}\nabla^{T}_{x}\Pi h \frac{\partial^2 p_0}{\partial p^2}\nabla_{x}\Pi h f_{\infty}dxdp.
\end{align}
We have \begin{equation}\label{hessian inq.}\frac{\partial^2 p_0}{\partial p^2}=\frac{1}{p_0}\left(I-\frac{p\otimes p}{p_0^2}\right)\geq \frac{1}{p_0^3} I,
\end{equation}
where  $I\in \mathbb{R}^{d\times d}$ is the identity matrix.
  \eqref{TP1} and \eqref{hessian inq.} yield
$$||\mathrm{T}\Pi h||^2_{L^2(\mathbb{R}^{2d}, f_{\infty})}\geq \left(\int_{\mathbb{R}^{d}} \frac{1}{p_0^3}Mdp \right)\int_{\mathbb{R}^{d}}|\nabla_{x}\Pi h|^2\rho_{\infty}dx.$$
Then, the Poincar\'e inequality \eqref{Poincare rho} provides the claimed result. \\

$iv)$ Using $\displaystyle \nabla_p M=-\frac{p}{p_0}M$ and integrating by parts with respect to $p$ we obtain
$$\Pi \mathrm{T} \Pi h=\int_{\mathbb{R}^{d}}\frac{p}{p_0}\cdot\nabla_x \Pi hM dp=-\int_{\mathbb{R}^{d}}\nabla_x \Pi h \cdot \nabla_p M dp=0.$$
\end{proof}

Next we define as in \eqref{A oper.}: 
$\mathrm{A}f\colonequals (\mathrm{I}+(\mathrm{T}\Pi)^{*}\mathrm{T}\Pi)^{-1}(\mathrm{T}\Pi)^{*}f,$ which is a bounded operator on  $\mathcal{H},$ due to \cite[Lemma 1]{dolbeault2015hypocoercivity}.
We now show that  \eqref{boun.aux} holds.
\begin{lemma}\label{LEMMA} Assume Assumption \ref{Assumptions i-ii} holds. Then, the operators $\mathrm{A}\mathrm{T}(\mathrm{I}-\Pi)$ and $\mathrm{A} \mathrm{L}$ are bounded, and there exists a constant $C_M>0$ such that, for all $h \in \mathcal{H},$
\begin{equation*}
||\mathrm{A}\mathrm{T}(\mathrm{I}-\Pi)h||_{L^2(\mathbb{R}^{2d}, f_{\infty})}+||\mathrm{A} \mathrm{L}||_{L^2(\mathbb{R}^{2d}, f_{\infty})}\leq C_M ||(\mathrm{I}-\Pi)h||_{L^2(\mathbb{R}^{2d}, f_{\infty})}.
\end{equation*}
\end{lemma}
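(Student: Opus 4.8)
Throughout, write $h$ (and $\phi$, $f$) for a smooth test function and recall from Proposition~\ref{proporties} that $\mathrm{L}$ is symmetric, $\mathrm{T}$ skew-symmetric, and $\Pi$ the orthogonal projection onto the $p$-independent functions. The plan is to estimate both operators \emph{through their adjoints}, which converts the statement into a weighted elliptic regularity estimate in the $x$-variable. The computations rest on the pointwise identities $\tfrac{\partial^2 p_0}{\partial p^2}=\tfrac1{p_0}\bigl(I-\tfrac{p\otimes p}{p_0^2}\bigr)$ (so $\tfrac1{p_0^3}I\le\tfrac{\partial^2 p_0}{\partial p^2}\le\tfrac1{p_0}I$) and the cancellation $D\,\tfrac{\partial^2 p_0}{\partial p^2}=\tfrac1{p_0^2}I$. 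Since $M$ is radial, on $p$-independent functions one has $(\mathrm{T}\Pi)^*\mathrm{T}\Pi g=-\tfrac1{\rho_\infty}\nabla_x\!\cdot\!(\rho_\infty\Theta\nabla_x g)$ with $\Theta=\int\tfrac{p\otimes p}{p_0^2}M\,dp=\theta I$, $\theta=\tfrac1d\int\tfrac{|p|^2}{p_0^2}M\,dp\in(0,1)$; hence $\mathcal{B}:=(\mathrm{T}\Pi)^*\mathrm{T}\Pi=\theta\mathcal{L}_{\mathrm{OU}}$ with $\mathcal{L}_{\mathrm{OU}}:=-\Delta_x+\nabla_x V\cdot\nabla_x$ self-adjoint and nonnegative on $L^2(\mathbb{R}^d,\rho_\infty)$. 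As $\mathcal{B}$ annihilates $\mathrm{Ran}(\mathrm{I}-\Pi)$ and preserves $\mathrm{Ran}\,\Pi$, the resolvent $(\mathrm{I}+\mathcal{B})^{-1}$ commutes with $\Pi$, and using the definition \eqref{A oper.} of $\mathrm{A}$ together with self-adjointness of $(\mathrm{I}+\mathcal{B})^{-1}$ one gets, for test functions $f,\phi$,
\[
\langle\mathrm{A}f,\phi\rangle=\langle f,\mathrm{T}\psi\rangle,\qquad \psi:=(\mathrm{I}+\theta\mathcal{L}_{\mathrm{OU}})^{-1}\Pi\phi\quad(\text{$p$-independent}).
\]
Testing the equation $\psi+\theta\mathcal{L}_{\mathrm{OU}}\psi=\Pi\phi$ against $\psi$ and against $\mathcal{L}_{\mathrm{OU}}\psi$, and using $\langle\psi,\mathcal{L}_{\mathrm{OU}}\psi\rangle_{L^2(\rho_\infty)}=\|\nabla_x\psi\|^2_{L^2(\rho_\infty)}$, yields $\|\psi\|_{L^2(\rho_\infty)}+\|\nabla_x\psi\|_{L^2(\rho_\infty)}+\|\mathcal{L}_{\mathrm{OU}}\psi\|_{L^2(\rho_\infty)}\le C\|\phi\|$ with $C=C(\theta)$.

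For the term $\mathrm{A}\mathrm{L}$: by symmetry of $\mathrm{L}$, $\langle\mathrm{A}\mathrm{L}h,\phi\rangle=\langle\mathrm{L}h,\mathrm{T}\psi\rangle=\langle h,\mathrm{L}\mathrm{T}\psi\rangle$. Using $\nabla_p\bigl(\tfrac{p}{p_0}\cdot\nabla_x\psi\bigr)=\tfrac{\partial^2 p_0}{\partial p^2}\nabla_x\psi$ and $D\tfrac{\partial^2 p_0}{\partial p^2}=\tfrac1{p_0^2}I$ one finds explicitly $\mathrm{L}\mathrm{T}\psi=\tfrac1{f_\infty}\text{div}_p\bigl(\tfrac{f_\infty}{p_0^2}\nabla_x\psi\bigr)=-\bigl(\tfrac2{p_0^4}+\tfrac1{p_0^3}\bigr)(p\cdot\nabla_x\psi)$, which is odd in $p$; hence $\langle\Pi h,\mathrm{L}\mathrm{T}\psi\rangle=0$ and $\langle\mathrm{A}\mathrm{L}h,\phi\rangle=\langle(\mathrm{I}-\Pi)h,\mathrm{L}\mathrm{T}\psi\rangle$. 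Since $|\mathrm{L}\mathrm{T}\psi|\le\tfrac3{p_0^2}|\nabla_x\psi|$, integrating out $p$ gives $\|\mathrm{L}\mathrm{T}\psi\|\le 3\|\nabla_x\psi\|_{L^2(\rho_\infty)}\le C\|\phi\|$, so taking the supremum over $\|\phi\|\le1$ yields $\|\mathrm{A}\mathrm{L}h\|\le C\|(\mathrm{I}-\Pi)h\|$. Note that this bound requires no growth hypothesis on $V$.

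For the term $\mathrm{A}\mathrm{T}(\mathrm{I}-\Pi)$: by skew-symmetry of $\mathrm{T}$, $\langle\mathrm{A}\mathrm{T}(\mathrm{I}-\Pi)h,\phi\rangle=\langle\mathrm{T}(\mathrm{I}-\Pi)h,\mathrm{T}\psi\rangle=-\langle(\mathrm{I}-\Pi)h,\mathrm{T}^2\psi\rangle$, and since $\psi$ is $p$-independent, $\mathrm{T}^2\psi=\sum_{i,j}\tfrac{p_ip_j}{p_0^2}\partial^2_{x_ix_j}\psi-\nabla_x V^{\mathrm{T}}\tfrac{\partial^2 p_0}{\partial p^2}\nabla_x\psi$. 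Using $\tfrac{|p|^2}{p_0^2}\le1$ and $\tfrac{\partial^2 p_0}{\partial p^2}\le\tfrac1{p_0}I$ and integrating out $p$, one gets $\|\mathrm{T}^2\psi\|\le\bigl\|\tfrac{\partial^2\psi}{\partial x^2}\bigr\|_{L^2(\rho_\infty)}+C\bigl\||\nabla_x V|\,|\nabla_x\psi|\bigr\|_{L^2(\rho_\infty)}$, where the first term denotes the $L^2(\rho_\infty)$-norm of the Frobenius norm of the Hessian. The crucial input is the weighted elliptic estimate
\[
\Bigl\|\tfrac{\partial^2\psi}{\partial x^2}\Bigr\|^2_{L^2(\rho_\infty)}+\bigl\||\nabla_x V|\,|\nabla_x\psi|\bigr\|^2_{L^2(\rho_\infty)}\le C_\star\Bigl(\|\mathcal{L}_{\mathrm{OU}}\psi\|^2_{L^2(\rho_\infty)}+\|\nabla_x\psi\|^2_{L^2(\rho_\infty)}\Bigr),
\]
with $C_\star=C_\star(c_1,c_2,c_3,d)$; combined with the a~priori bounds on $\psi$ from the first paragraph it gives $\|\mathrm{T}^2\psi\|\le C\|\phi\|$, and the supremum over $\|\phi\|\le1$ yields $\|\mathrm{A}\mathrm{T}(\mathrm{I}-\Pi)h\|\le C\|(\mathrm{I}-\Pi)h\|$. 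Adding this to the previous bound proves the lemma.

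The weighted elliptic estimate is the only genuinely nonroutine step, and it is exactly where Assumption~\ref{Assumptions i-ii}(ii) enters (the Poincaré inequality \eqref{Poincare rho} is not needed). One starts from the Bochner ($\Gamma_2$) identity $\|\mathcal{L}_{\mathrm{OU}}\psi\|^2_{L^2(\rho_\infty)}=\bigl\|\tfrac{\partial^2\psi}{\partial x^2}\bigr\|^2_{L^2(\rho_\infty)}+\int\nabla_x\psi^{\mathrm{T}}\tfrac{\partial^2 V}{\partial x^2}\nabla_x\psi\,\rho_\infty\,dx$; bounds the last term using $\bigl\|\tfrac{\partial^2 V}{\partial x^2}\bigr\|_F\le c_3(1+|\nabla_x V|)$; splits the resulting $\int|\nabla_x V|\,|\nabla_x\psi|^2\rho_\infty$ by Young's inequality; estimates the appearing $\int|\nabla_x V|^2|\nabla_x\psi|^2\rho_\infty$ componentwise by integration by parts (using $\nabla_x\rho_\infty=-\nabla_x V\,\rho_\infty$ and $\Delta_x V\le c_1+\tfrac{c_2}{2}|\nabla_x V|^2$); and finally absorbs the Hessian terms so produced into the left-hand side, which is possible because $c_2<2$. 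I expect this absorption argument, carried out with explicit constants, to be the main effort; the $p$-moment bookkeeping, the adjoint manipulations, and the two pointwise matrix identities above are all routine.
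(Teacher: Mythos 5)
Your proof is correct, and while its overall skeleton — compute $(\mathrm{T}\Pi)^*\mathrm{T}\Pi$ as $\theta$ times an Ornstein--Uhlenbeck operator, compute $\mathrm{T}^2$ and $\mathrm{L}\mathrm{T}$ on $p$-independent functions via the key cancellation $D\,\frac{\partial^2 p_0}{\partial p^2}=\frac{1}{p_0^2}I$, then reduce to weighted elliptic regularity in $x$ — matches the paper's, the route you take through the crucial elliptic-regularity step is genuinely different and in one respect sharper. The paper proves its Theorem~\ref{lem:ellip} (the bounds on $\int|\nabla_x V|^2|\nabla_x u|^2\rho_\infty$ and $\int\|\partial^2_x u\|_F^2\rho_\infty$) by testing the elliptic equation against suitably weighted multiples of $u$ and then invoking the weighted Poincar\'e inequalities of Lemma~\ref{lemma. weight.poin}, which are in turn consequences of the Poincar\'e inequality \eqref{Poincare rho}. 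You instead start from the Bochner/$\Gamma_2$ identity $\|\mathcal{L}_{\mathrm{OU}}\psi\|^2_{L^2(\rho_\infty)}=\|\partial^2_x\psi\|^2_F+\int\nabla_x\psi^T\frac{\partial^2V}{\partial x^2}\nabla_x\psi\,\rho_\infty$, control $\int|\nabla_xV|^2|\nabla_x\psi|^2\rho_\infty$ by a direct integration by parts using only $\Delta_xV\le c_1+\tfrac{c_2}{2}|\nabla_xV|^2$ (no Poincar\'e), and close with Young plus absorption (possible since $c_2<1<2$). This keeps $\|\nabla_x\psi\|^2$ as a harmless extra term on the right, which is precisely what removes the need for \eqref{Poincare rho}; since \eqref{Poincare rho} is anyway assumed in the lemma, this is an observation about which hypothesis is actually used where rather than a strengthening of the conclusion, but it is a cleaner dependency structure and mirrors the fact that Theorem~\ref{th:hypoellipticity in H^1} later drops the Poincar\'e assumption. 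Your other two departures are presentational but tidy: working throughout with the quadratic form $\langle\mathrm{A}\cdot h,\phi\rangle=\langle\cdot\,h,\mathrm{T}\psi\rangle$ (the paper writes out the adjoint operator and, for $\mathrm{AL}$, derives a PDE for $g=\mathrm{AL}h$ and does an energy estimate), and the parity observation that $\mathrm{L}\mathrm{T}\psi$ is odd in $p$ so pairs to zero with $\Pi h$, which replaces the paper's step of replacing $h$ by $(\mathrm{I}-\Pi)h$ using $\mathrm{L}\Pi=0$. To turn the sketch into a complete proof you would still need to justify the integrations by parts (the $\Gamma_2$ identity and the absorption step) on the relevant class of functions, but these are the same standard density issues the paper also elides.
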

\begin{proof}
\textbf{Step 1, boundedness  of $\mathrm{AT}(\mathrm{I}-\Pi)$:}\\
The operator $\mathrm{AT}(\mathrm{I}-\Pi)$ is bounded if and only if its adjoint $[\mathrm{AT}(\mathrm{I}-\Pi)]^*$ is bounded. Since $\Pi$ is self-adjoint and $\mathrm{T}$ is skew-symmetric, we have
\begin{equation*}
[\mathrm{AT}(\mathrm{I}-\Pi)]^*=-(\mathrm{I}-\Pi)\mathrm{T}^2\Pi[\mathrm{I}+(\mathrm{T} \Pi)^*(\mathrm{T} \Pi)]^{-1}.
\end{equation*} 
Let  $h \in \mathcal{H}$ and $g\colonequals [\mathrm{I}+(\mathrm{T}\Pi)^{*}(\mathrm{T}\Pi)]^{-1}h,$ then
\begin{equation}\label{A*}
[\mathrm{AT}(\mathrm{I}-\Pi)]^*h=-(\mathrm{I}-\Pi)\mathrm{T}^2\Pi g.
\end{equation}
 We compute
\begin{equation}\label{T^2Pg}\mathrm{T}^2\Pi g=\frac{p^T}{p_0}\frac{\partial^2 \Pi g}{\partial x^2}\frac{p}{p_0}-\nabla_x^T V\frac{\partial^2 p_0}{\partial p^2}\nabla_x \Pi g
=\sum_{i,j=1}^d\frac{p_i p_j}{p_0^2}\partial^2_{x_i x_j} \Pi g-\sum_{i,j=1}^d\partial_{x_i} V\partial^2_{p_i p_j}p_0\partial_{x_j} \Pi g.
\end{equation}
 We note that  $\displaystyle \int_{\mathbb{R}^d}\frac{p_i p_j}{p^2_0}Mdp=\int_{\mathbb{R}^d}\partial^2_{p_i p_j}p_0Mdp=0  $ if $i\neq j,$ and we denote $\displaystyle a\colonequals \int_{\mathbb{R}^d}\frac{p^2_i}{p_0^2}Mdp= \frac{1}{d}\int_{\mathbb{R}^d}\frac{|p|^2}{1+|p|^2}Mdp>0.$   Then, using \eqref{T^2Pg} we compute  
\begin{equation}\label{PT^2Pg=div}
\Pi \mathrm{T}^2\Pi g=a\Delta_x \Pi g-a\nabla_x V \cdot \nabla_x \Pi g=\frac{a}{\rho_{\infty}}\text{div}_x( \nabla_x  \Pi g \rho_{\infty} ),
\end{equation}
hence
$$h=g+(\mathrm{T}\Pi)^{*}\mathrm{T}\Pi g=g-\Pi\mathrm{T}^2\Pi g=g-\frac{a}{\rho_{\infty}}\text{div}_x( \nabla_x  \Pi g \rho_{\infty} ).$$
Applying the operator $\Pi$ to this equation we get 
\begin{equation*}\label{ellip.eq. Pg}\Pi g -\frac{a}{\rho_{\infty}}\text{div}_x( \nabla_x  \Pi g \rho_{\infty} )=\Pi h.
\end{equation*}
For the last equation we have the following regularity estimates (see Theorem \ref{lem:ellip} in Appendix \ref{6.1}) \begin{equation}\label{reg}
\int_{\mathbb{R}^d}|\nabla_x \Pi g|^2 |\nabla_x V|^2 \rho_{\infty}dx\leq C_1\int_{\mathbb{R}^d} (\Pi h)^2 \rho_{\infty}dx, \, \, \, \, 
\int_{\mathbb{R}^d}\left| \left|\frac{\partial^2 \Pi g}{\partial x^2} \right|\right|_F^2 \rho_{\infty}dx \leq C_2\int_{\mathbb{R}^d} (\Pi h)^2 \rho_{\infty}dx
\end{equation} for some constants $C_1>0$ and $C_2>0.$

Using \eqref{A*} and \eqref{T^2Pg}  we estimate 
\begin{align}\label{E}
||[\mathrm{AT}(\mathrm{I}-\Pi)]^*h||^2_{L^2(\mathbb{R}^{2d}, f_{\infty})}&=||(\mathrm{I}-\Pi)\mathrm{T}^2\Pi g||^2_{L^2(\mathbb{R}^{2d}, f_{\infty})}\nonumber \leq ||\mathrm{T}^2\Pi g||^2_{L^2(\mathbb{R}^{2d}, f_{\infty})}\nonumber \\
&=\int_{\mathbb{R}^{2d}}\left(\frac{p^T}{p_0}\frac{\partial^2 \Pi g}{\partial x^2}\frac{p}{p_0}-\nabla_x^T V(x)\frac{\partial^2 p_0}{\partial p^2}\nabla_x \Pi g\right)^2f_{\infty}dxdp
\nonumber \\
& \leq 2 \int_{\mathbb{R}^{2d}}\left(\frac{p^T}{p_0}\frac{\partial^2 \Pi g}{\partial x^2}\frac{p}{p_0} \right)^2f_{\infty}dxdp +2\int_{\mathbb{R}^{2d}}\left(\nabla_x^T V(x)\frac{\partial^2 p_0}{\partial p^2}\nabla_x \Pi g\right)^2f_{\infty}dxdp.
\end{align}
Using the H\"older inequality and \eqref{reg} we estimate the last two terms of \eqref{E}:
\begin{equation}\label{E1}
2 \int_{\mathbb{R}^{2d}}\left(\frac{p^T}{p_0}\frac{\partial^2 \Pi g}{\partial x^2}\frac{p}{p_0} \right)^2f_{\infty}dxdp
\leq 2 \int_{\mathbb{R}^{2d}}\frac{|p|^2}{{1+|p|^2}} \left|\left|\frac{\partial^2 \Pi g}{\partial x^2}\right|\right|_F^2f_{\infty}dxdp\leq adC_2 \int_{\mathbb{R}^{d}}(\Pi h)^2\rho_{\infty}dx,
\end{equation}
\begin{multline}\label{E2}
2\int_{\mathbb{R}^{2d}}\left(\nabla_x^T V(x)\frac{\partial^2 p_0}{\partial p^2}\nabla_x \Pi g\right)^2f_{\infty}dxdp
\leq 2\int_{\mathbb{R}^{2d}}\left|\left|\frac{\partial^2 p_0}{\partial p^2}\right|\right|_F|\nabla_x V|^2|\nabla_x \Pi g|^2   f_{\infty}dxdp \leq K_1 \int_{\mathbb{R}^{d}}(\Pi h)^2\rho_{\infty}dx,
\end{multline}
where
 $K_1 \colonequals 2C_1\left(\int_{\mathbb{R}^{d}}\left|\left|\frac{\partial^2 p_0}{\partial p^2}\right|\right|_FMdp\right).$\\
\eqref{E}, \eqref{E1}, and \eqref{E2} show that 
\begin{multline*}
||[\mathrm{AT}(\mathrm{I}-\Pi)]h||^2_{L^2(\mathbb{R}^{2d}, f_{\infty})}=||[\mathrm{AT}(\mathrm{I}-\Pi)]^*h||^2_{L^2(\mathbb{R}^{2d}, f_{\infty})}\\
\leq (adC_2+K_1)||\Pi h||^2_{L^2(\mathbb{R}^{2d}, f_{\infty})}\leq(adC_2+K_1)||h||^2_{L^2(\mathbb{R}^{2d}, f_{\infty})}.
\end{multline*}
This shows that $\mathrm{AT}(\mathrm{I}-\Pi)$ is bounded. Moreover,
replacing $h$  with $(\mathrm{I}-\Pi )h$ and using $(\mathrm{I}-\Pi )^2=(\mathrm{I}-\Pi )$ we obtain 
\begin{equation*}
||[\mathrm{AT}(\mathrm{I}-\Pi)]h||_{L^2(\mathbb{R}^{2d}, f_{\infty})}\leq \sqrt{adC_2+K_1}\,||(\mathrm{I}-\Pi)h||_{L^2(\mathbb{R}^{2d}, f_{\infty})}.
\end{equation*}
\textbf{Step 2, boundedness  of $\mathrm{AL}$:}\\
Let $h \in \mathcal{H}$ and $g\colonequals \mathrm{AL} h.$ Then $$(\mathrm{T}\Pi)^*(\mathrm{L}h)=g+(\mathrm{T}\Pi)^*(\mathrm{T}\Pi)g\, \, \, \, \, \, \Longleftrightarrow \, \, \, \, \, \, g=-\Pi \mathrm{T}(\mathrm{L}h)+\Pi \mathrm{T}^2\Pi g.$$ This shows that $g=\Pi g.$ Using \eqref{PT^2Pg=div} we obtain
\begin{equation}\label{g=PTLh}
g-\frac{a}{\rho_{\infty}}\text{div}_x( \nabla_x   g \rho_{\infty} )=-\Pi \mathrm{T}(\mathrm{L}h).
\end{equation}
Integrating by parts we find 
\begin{align}\label{PTLh}
\Pi \mathrm{T}(\mathrm{L}h)&=\int_{\mathbb{R}^{d}}\left[\frac{p}{p_0}\cdot \nabla_x(\mathrm{L}h)-\nabla_x V\cdot \nabla_p (\mathrm{L}h)\right]Mdp\nonumber \\
&=\int_{\mathbb{R}^{d}}\left[\frac{p}{p_0}\cdot \nabla_x(\mathrm{L}h)-\nabla_x V\cdot \frac{p}{p_0} (\mathrm{L}h)\right]Mdp \nonumber \\
&=\frac{1}{\rho_{\infty}}\text{div}_x\left[\int_{\mathbb{R}^{d}}\frac{p}{p_0} (\mathrm{L}h)f_{\infty}dp\right].
\end{align}
Then, for  $k \in \{1,...,d\}$ and $p=(p_1,...,p_d)^T,$ we compute 
\begin{align}\label{PTLhk}
\int_{\mathbb{R}^{d}}\frac{p_k}{p_0} (\mathrm{L}h)f_{\infty}dp& =\int_{\mathbb{R}^{d}}\frac{p_k}{p_0} \text{div}_p(D\nabla_p h f_{\infty})dp\nonumber \\
& =-\int_{\mathbb{R}^{d}}\nabla^T_p\left(\frac{p_k}{p_0}\right) D\nabla_p h f_{\infty}dp\nonumber \\
&=-\int_{\mathbb{R}^{d}}\left[D\nabla_p\left(\frac{p_k}{p_0}\right)\right]\cdot \nabla_p h f_{\infty}dp\nonumber \\
&=-\int_{\mathbb{R}^{d}}\frac{1}{p_0^2}\partial_{p_k} h f_{\infty}dp 
\nonumber \\
&=-\int_{\mathbb{R}^{d}}\left(\frac{p_k}{p_0^3}+\frac{2p_k}{p_0^4}\right) h f_{\infty}dp .
\end{align}
\eqref{g=PTLh}, \eqref{PTLh}, and \eqref{PTLhk} show that 
\begin{equation}\label{g=PTLh1}
g-\frac{a}{\rho_{\infty}}\text{div}_x( \nabla_x   g \rho_{\infty} )=\frac{1}{\rho_{\infty}}\text{div}_x\left[\int_{\mathbb{R}^{d}}\left(\frac{p}{p_0^3}+\frac{2p}{p_0^4}\right) h f_{\infty}dp\right].
\end{equation}
We multiply this equation by $g\rho_{\infty}$ and integrate by parts
\begin{align*}
\int_{\mathbb{R}^{d}}g^2\rho_{\infty}dx+a\int_{\mathbb{R}^{d}}\nabla^T_x   g \nabla_x   g \rho_{\infty}dx & =-\int_{\mathbb{R}^{d}} \nabla_x  g \cdot\left[\int_{\mathbb{R}^{d}}\left(\frac{p}{p_0^3}+\frac{2p}{p_0^4}\right) h Mdp\right]\rho_{\infty}dx\nonumber \\
& \leq \varepsilon\int_{\mathbb{R}^{d}} |\nabla_x   g|^2 \rho_{\infty}dx+\frac{1}{4 \varepsilon}\int_{\mathbb{R}^{d}} \left|\int_{\mathbb{R}^{d}}\left(\frac{p}{p_0^3}+\frac{2p}{p_0^4}\right) h Mdp\right|^2\rho_{\infty}dx,
\end{align*}
where $\varepsilon>0$ is small enough so that $a-\varepsilon $ is positive. Then by the H\"older inequality 
\begin{equation*}
\int_{\mathbb{R}^{d}}g^2\rho_{\infty}dx+(a-\varepsilon ) \int_{\mathbb{R}^{d}}\nabla^T_x   g\nabla_x   g \rho_{\infty}dx \\
\leq \frac{1}{4 \varepsilon}\left(\int_{\mathbb{R}^{d}}\left|\frac{p}{p_0^3}+\frac{2p}{p_0^4}\right|^2  Mdp\right)\int_{\mathbb{R}^{2d}} h^2 f_{\infty}dxdp.
\end{equation*}
This inequality implies $$||g||_{L^2(\mathbb{R}^{2d}, f_{\infty})}=||\mathrm{AL}h||_{L^2(\mathbb{R}^{2d}, f_{\infty})}\leq K_2||h||_{L^2(\mathbb{R}^{2d}, f_{\infty})}$$ with $K_2 \colonequals \sqrt{\frac{1}{4 \varepsilon}\int_{\mathbb{R}^{d}}\left|\frac{p}{p_0^3}+\frac{2p}{p_0^4}\right|^2  Mdp}. $ This  implies that $\mathrm{AL}h $ is bounded. Moreover, replacing $h$ with $(\mathrm{I}-\Pi)h$ in the estimate above and using $\mathrm{L}\Pi=0,$  we obtain 
$$||\mathrm{AL}h||_{L^2(\mathbb{R}^{2d}, f_{\infty})}\leq K_2||(\mathrm{I}-\Pi)h||_{L^2(\mathbb{R}^{2d}, f_{\infty})}.$$ 
\end{proof}
\begin{Remark}
    We mention that the boundedness of $\mathrm{AL}$ is easily proven for the classical kinetic Fokker-Planck equation \eqref{KFP} with $m=q=\sigma=\nu=1$. The reason is that, in this case, the macroscopic flux $j_h\colonequals \int_{\mathbb{R}^d}vhf_\infty \,dv$ satisfies $j_{\mathrm{L}h}=-j_h$, where $f_\infty(x,v)=const\, e^{-V(x)-|v|^2/2}$ and $\mathrm{L}h:=f_\infty^{-1} \diver_v(\nabla_v h f_\infty)$. This then implies   $\mathrm{AL}=-\mathrm{A},$  see \cite{dolbeault2009hypocoercivity}, \cite[Lemma 20]{VPFP}. However, \eqref{PTLhk}  shows that  this relation does not hold for  the relativistic   flux $\tilde j_h\colonequals \int_{\mathbb{R}^d}\frac{p}{p_0}hf_\infty\,dp.$ Therefore it was more difficult to prove the boundedness of $\mathrm{AL}$ in the relativistic case here.
\end{Remark}
\begin{proof}[\textbf{Proof of Theorem \ref{main result}}]
Let $f$ be the solution of \eqref{Eq}. Then $\displaystyle h\colonequals \frac{f-f_{\infty}}{f_{\infty}}$ satisfies 
\begin{equation}\label{t+T+L}\partial_t h+\mathrm{T}h=\mathrm{L} h, \, \, \, \, \, \, \, \, \, h_{|t=0}=h_0,
\end{equation}
where the operators $\mathrm{T}$ and $\mathrm{L}$ are  defined in \eqref{T} and \eqref{L}, respectively. If $\Pi$ is defined as in \eqref{P}, Proposition \ref{proporties} and Lemma \ref{LEMMA} show that these operators  satisfy the conditions \eqref{mic.coer}-\eqref{boun.aux}. Therefore, Theorem \ref{DMS} holds for \eqref{t+T+L}, and \eqref{|| decay} provides the claimed result.
\end{proof}

\section{Exponential convergence in $\mathscr{H}^1(\mathbb{R}^{2d}, f_{\infty})$}\label{sec:H1-conv}
In this section, we shall study the long-time behavior of \eqref{Eqh} in $\mathscr{H}^1(\mathbb{R}^{2d}, f_{\infty}).$  To do that we shall construct another Lyapunov functional (rather than $\mathrm{H}_{\delta}$ which is used  to prove Theorem \ref{main result}).
\subsection{Preliminaries}
\begin{lemma}\label{lemma ||h||^2}
Let $h$ be the solution of \eqref{Eqh}. Then, for all $t>0,$ 
\begin{equation*}
\frac{d}{dt} \int_{ \mathbb{R}^{2d}}h^2 f_{\infty}dxdp=-2\int_{ \mathbb{R}^{2d}}\nabla^{T}_p h D\nabla_p h f_{\infty}dxdp.
\end{equation*}
In particular, we have $||h(t)||_{L^2(\mathbb{R}^{2d}, f_{\infty})}\leq ||h_0||_{L^2(\mathbb{R}^{2d}, f_{\infty})}$ for all $t\geq 0.$
\end{lemma}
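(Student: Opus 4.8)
The plan is to differentiate the squared weighted $L^2$-norm along the evolution and identify the right-hand side as the negative of the Dirichlet-type form associated to $\mathrm{L}$. This is a direct computation: since $h$ solves $\partial_t h + \mathrm{T}h = \mathrm{L}h$, I would write
\[
\frac{d}{dt}\int_{\mathbb{R}^{2d}} h^2 f_\infty\, dxdp = 2\int_{\mathbb{R}^{2d}} h\,(\partial_t h)\, f_\infty\, dxdp = 2\langle h, \mathrm{L}h\rangle - 2\langle h, \mathrm{T}h\rangle,
\]
using the scalar product of $\mathcal{H}$. By the skew-symmetry of $\mathrm{T}$ established in Proposition \ref{proporties}(i), the term $\langle h, \mathrm{T}h\rangle$ vanishes. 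By the symmetry computation \eqref{L(h,g)} in the proof of Proposition \ref{proporties}, $\langle h, \mathrm{L}h\rangle = -\int_{\mathbb{R}^{2d}} \nabla_p^T h\, D\, \nabla_p h\, f_\infty\, dxdp$, which gives the claimed identity.

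**Next,** the "in particular" statement follows immediately: since $D(p)=\frac{I+p\otimes p}{p_0}$ is positive semidefinite (indeed positive definite) for every $p\in\mathbb{R}^d$, the integrand $\nabla_p^T h\, D\, \nabla_p h \geq 0$, so $\frac{d}{dt}\|h(t)\|^2_{L^2(\mathbb{R}^{2d}, f_\infty)} \leq 0$. Hence $t\mapsto \|h(t)\|^2_{L^2(\mathbb{R}^{2d}, f_\infty)}$ is nonincreasing, and in particular $\|h(t)\|_{L^2(\mathbb{R}^{2d}, f_\infty)} \leq \|h_0\|_{L^2(\mathbb{R}^{2d}, f_\infty)}$ for all $t\geq 0$.

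**The only delicate point** is justifying the formal manipulations — interchanging $\frac{d}{dt}$ with the spatial integral and performing the integrations by parts without boundary terms. As the authors already note after \eqref{L}, they work with solutions of the evolution problem and suppress domain technicalities; I would adopt the same convention, carrying out the computation for sufficiently regular, sufficiently decaying solutions (or smooth compactly supported data, then passing to the limit via the contraction property of the semigroup), exactly as was tacitly done in deriving \eqref{dtint}. In fact this lemma is essentially just \eqref{dtint} restated (note $D\nabla_p(f/M) = D\nabla_p(f/f_\infty)$ since $f_\infty = \rho_\infty M$ and $\rho_\infty$ is $p$-independent), so no new obstacle arises beyond what \eqref{dtint} already implicitly handled.
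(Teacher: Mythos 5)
Your proof is correct and follows essentially the same computation as the paper: differentiate, substitute $\partial_t h = \mathrm{L}h - \mathrm{T}h$, and observe that the transport contribution vanishes while the collision contribution yields the Dirichlet form. The only cosmetic difference is that you invoke the already-proved skew-symmetry of $\mathrm{T}$ and identity \eqref{L(h,g)} rather than redoing the integration by parts inline, which is a perfectly valid shortcut.
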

\begin{proof}
We integrate by parts and use $\nabla_pf_{\infty}=-\frac{p}{p_0} f_{\infty}$ and $\nabla_xf_{\infty}=-\nabla_x V f_{\infty}$ to obtain 
\begin{align*}
\frac{d}{dt} \int_{\mathbb{R}^{2d}}h^2f_{\infty}dxdp&=2\int_{ \mathbb{R}^{2d}}h\partial_t h f_{\infty}dxdp\\
&=-2\int_{ \mathbb{R}^{2d}}\left(\frac{p}{p_0}\cdot \nabla_x h-\nabla_x V\cdot \nabla_p h\right)hf_{\infty}dxdp +2\int_{ \mathbb{R}^{2d}}\text{div}_p(D\nabla_p h f_{\infty})hdxdp\\
&=-2\int_{ \mathbb{R}^{2d}}\nabla^{T}_p h D\nabla_p h f_{\infty}dxdp.
\end{align*} 
\end{proof}
Let  $ P=P(x, p) \in \mathbb{R}^{2d\times 2d}$  be a  symmetric, positive definite matrix depending on the variables $x, \,p\in \mathbb{R}^d$ and specified later.  We define 
\begin{equation}\label{S_P}
\mathrm{S}_P[h]\colonequals \int_{\mathbb{R}^{2d}}\begin{pmatrix}
\nabla_x h\\ \nabla_p h
\end{pmatrix}^T  P\begin{pmatrix}
\nabla_x h\\
\nabla_p h
\end{pmatrix} f_{\infty}dxdp.
\end{equation}

\begin{lemma}\label{lemma main}
Let $h$ be the  solution of \eqref{Eqh}. Then, for all $t>0,$ 
\begin{multline}\label{derivS} 
\frac{d}{dt} \mathrm{S}_P[h(t)]=-2\int_{ \mathbb{R}^{2d}}\left\{\sum_{i, j=1}^d \begin{pmatrix}
\nabla_x ( \partial_{p_i} h)\\ \nabla_p(\partial_{p_i} h)
\end{pmatrix}^T  P\begin{pmatrix}
\nabla_x (\partial_{p_j} h)\\\nabla_p(\partial_{p_j} h)
\end{pmatrix} a_{ij}\right\}f_{\infty}dxdp\\+2\int_{ \mathbb{R}^{2d}}  \begin{pmatrix}
\nabla_x h\\ \nabla_p h
\end{pmatrix}^T P \begin{pmatrix} 0\\\sum_{i,j=1}^d \nabla_p a_{ij}\partial^2_{p_ip_j}h
\end{pmatrix} f_{\infty}dxdp\\
-\int_{ \mathbb{R}^{2d}}  \begin{pmatrix}
\nabla_x h\\ \nabla_p h
\end{pmatrix}^T  \left\{QP+PQ^T\right\}\begin{pmatrix}
\nabla_x h\\ \nabla_p h
\end{pmatrix} f_{\infty}dxdp\\
+\int_{\mathbb{R}^{2d}}  \begin{pmatrix}
\nabla_x h\\ \nabla_p h
\end{pmatrix}^T  \left\{\sum_{i=1}^d\left(\frac{p_i}{p_0} \partial_{x_i} P -\partial_{x_i} V  \partial_{p_i} P\right)+\sum_{i,j=1}^d \frac{1}{f_{\infty}}\partial_{p_j}(\partial_{p_i}Pa_{ij}f_{\infty})\right\}\begin{pmatrix}
\nabla_x h\\ \nabla_p h
\end{pmatrix} f_{\infty}dxdp,
\end{multline}
where $Q=Q(x,p)\colonequals \begin{pmatrix} 
0&\frac{1}{p_0}(I-\frac{p\otimes p}{p_0^2})\\
-\frac{\partial^2 V}{\partial x^2 }&I-\frac{d}{p_0}(I-\frac{p \otimes p}{p_0^2})
\end{pmatrix}$ and $\displaystyle a_{ij}\colonequals \frac{\delta_{ij}+p_i p_j}{p_0}$  (which are the elements of $ \displaystyle D=D( p)=\frac{I+p\otimes p}{p_0}$ and $\delta_{ij}$ is the Kronecker symbol).
\end{lemma}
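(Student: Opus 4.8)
The identity is a direct (if lengthy) computation, differentiating $\mathrm{S}_P[h(t)]$ in time and substituting the evolution equation \eqref{Eqh}. The plan is to write $\partial_t h = -\mathrm{T}h + \mathrm{L}h$ with $\mathrm{T}h = \frac{p}{p_0}\cdot\nabla_x h - \nabla_x V\cdot\nabla_p h$ and $\mathrm{L}h = \frac{1}{f_\infty}\mathrm{div}_p(D\nabla_p h f_\infty)$, and treat the transport part and the diffusion part separately. First I would compute
\[
\frac{d}{dt}\mathrm{S}_P[h(t)] = 2\int_{\mathbb{R}^{2d}} \begin{pmatrix}\nabla_x\partial_t h\\ \nabla_p\partial_t h\end{pmatrix}^T P \begin{pmatrix}\nabla_x h\\ \nabla_p h\end{pmatrix} f_\infty\,dxdp,
\]
using that $P$ is symmetric, and then split $\partial_t h$.

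For the transport contribution ($-\mathrm{T}h$), the key observation is the commutator structure: $\nabla_{x}(\mathrm{T}h)$ and $\nabla_p(\mathrm{T}h)$ differ from $\mathrm{T}(\nabla_x h)$ and $\mathrm{T}(\nabla_p h)$ by lower-order terms coming from differentiating the coefficients $\frac{p}{p_0}$ and $\nabla_x V$; collecting these commutators produces exactly the matrix $Q$. Concretely, $\partial_{x_i}(\mathrm{T}h) = \mathrm{T}(\partial_{x_i}h) - \sum_j \partial^2_{x_ix_j}V\,\partial_{p_j}h$ and $\partial_{p_i}(\mathrm{T}h) = \mathrm{T}(\partial_{p_i}h) + \sum_j \partial_{p_i}(\tfrac{p_j}{p_0})\,\partial_{x_j}h$, and $\partial_{p_i}(p_j/p_0) = \tfrac{\delta_{ij}}{p_0} - \tfrac{p_ip_j}{p_0^3}$ gives the block $\tfrac1{p_0}(I-\tfrac{p\otimes p}{p_0^2})$, while the extra $I - \tfrac{d}{p_0}(I-\tfrac{p\otimes p}{p_0^2})$ block arises after integrating by parts the $\mathrm{T}$-part against $f_\infty$ (the $\mathrm{T}$ operator is skew-symmetric w.r.t. $f_\infty$, so the genuine transport terms cancel up to a term involving $\mathrm{div}$ of the coefficient field and of $P$ itself — this last piece is the term $\sum_i(\tfrac{p_i}{p_0}\partial_{x_i}P - \partial_{x_i}V\,\partial_{p_i}P)$ in the final line). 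The bookkeeping here must be done carefully because the $x$- and $p$-gradients of $h$ get mixed by $Q$.

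For the diffusion contribution ($\mathrm{L}h$), I would commute $\nabla_x,\nabla_p$ past $\mathrm{L}$ as well: since $\mathrm{L}$ has no $x$-dependence in its coefficients, $\nabla_x(\mathrm{L}h) = \mathrm{L}(\nabla_x h)$ plus the term from $\nabla_x f_\infty = -\nabla_x V f_\infty$, whereas $\nabla_p(\mathrm{L}h)$ picks up terms from $\nabla_p a_{ij}$. Then I would integrate by parts in $p$ (using $\nabla_p f_\infty = -\tfrac{p}{p_0}f_\infty$) to move one $p$-derivative onto the other factor; the leading term becomes the negative-definite quadratic form $-2\int \sum_{ij}(\ldots)^T P(\ldots)a_{ij}f_\infty$ in the second $p$-derivatives of $h$ (first line of \eqref{derivS}), the terms where the derivative hits $a_{ij}$ give the second line, and the terms where it hits $P$ give the $\sum_{ij}\tfrac1{f_\infty}\partial_{p_j}(\partial_{p_i}P a_{ij}f_\infty)$ piece in the last line. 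Matching the pieces that hit $f_\infty$ against the transport-side remainder confirms the coefficient field appearing in line 4.

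\textbf{Main obstacle.} The computation is entirely elementary but the main difficulty is the sheer volume of terms and ensuring that \emph{every} term produced by the two rounds of integration by parts (in $x$ for the transport part, in $p$ for the diffusion part) is correctly sorted into one of the four groups on the right-hand side, with no stray terms and correct signs — in particular verifying that the first-order-in-$\nabla h$ pieces from differentiating coefficients of $\mathrm{T}$ and the pieces from differentiating coefficients of $\mathrm{L}$ combine into precisely $QP+PQ^T$ and the coefficient matrix in the last line. I would organize the proof by first stating the two commutator identities as displayed equations, then handling the transport and diffusion parts in separate (short) lemma-like paragraphs, and finally assembling the four groups.
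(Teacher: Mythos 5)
Your plan follows essentially the same strategy as the paper's proof: differentiate $\mathrm{S}_P[h(t)]$ in time, substitute the PDE, commute $\nabla_x$ and $\nabla_p$ through the coefficients to produce the matrix $Q$ and the second-order remainder, and integrate by parts to move derivatives off the second-order terms and onto $P$ and $f_\infty$. The paper organizes this slightly differently---it first expands $\mathrm{L}h=\sum_{i,j}a_{ij}\partial^2_{p_ip_j}h+\bigl(\tfrac{d}{p_0}-1\bigr)p\cdot\nabla_ph$ and then derives a single vector evolution equation for $u=(\nabla_xh,\nabla_ph)^T$ whose first-order commutator part is precisely $-Q^Tu$---but your separate treatment of $\mathrm{T}$ and $\mathrm{L}$ would give the same decomposition.

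That said, two statements in your narrative misattribute where terms come from, and would lead you astray if taken literally. First, the $(2,2)$ block $I-\tfrac{d}{p_0}\bigl(I-\tfrac{p\otimes p}{p_0^2}\bigr)$ of $Q$ does \emph{not} ``arise after integrating by parts the $\mathrm{T}$-part against $f_\infty$.'' Integration by parts of the genuine transport term against $f_\infty$ produces only the zeroth-order correction $\sum_i\bigl(\tfrac{p_i}{p_0}\partial_{x_i}P-\partial_{x_i}V\,\partial_{p_i}P\bigr)$ (i.e.\ $\mathrm{T}P$). The $(2,2)$ block comes instead from commuting $\nabla_p$ with the \emph{first-order drift part of} $\mathrm{L}$: after the expansion $\mathrm{L}h=\sum a_{ij}\partial^2_{p_ip_j}h+\bigl(\tfrac{d}{p_0}-1\bigr)p\cdot\nabla_ph$, one has $\partial_{p_k}\bigl[(\tfrac{d}{p_0}-1)p_j\bigr]=-\delta_{kj}+\tfrac{d}{p_0}\bigl(\delta_{kj}-\tfrac{p_kp_j}{p_0^2}\bigr)$, and the resulting commutator contributes $-\bigl(I-\tfrac{d}{p_0}(I-\tfrac{p\otimes p}{p_0^2})\bigr)\nabla_ph$ to the equation for $\nabla_ph$. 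Second, $\mathrm{L}$ has \emph{no} $x$-dependence at all: $\tfrac1{f_\infty}\nabla_pf_\infty=-\tfrac{p}{p_0}$ is independent of $x$, so $\nabla_x$ commutes with $\mathrm{L}$ exactly and there is no ``extra term from $\nabla_xf_\infty=-\nabla_xVf_\infty$.'' Correcting these two points, your scheme matches the paper's calculation.
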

\begin{proof} We write \eqref{Eqh} as
\begin{align}\label{th}
\partial_t h&=\frac{1}{f_{\infty}}\text{div}_{p}(D\nabla_p h f_{\infty})-\frac{p}{p_0}\cdot\nabla_x h+\nabla_x V \cdot \nabla_p h\nonumber \\&=\sum_{i,j=1}^da_{ij}\partial^2_{p_ip_j}h-\sum_{i,j=1}^da_{ij}\frac{p_i}{p_0}\partial_{p_j}h+\sum_{i,j=1}^d\partial_{p_i}a_{ij}\partial_{p_j}h \nonumber-\frac{p}{p_0}\cdot\nabla_x h+\nabla_x V \cdot \nabla_p h\\&=\sum_{i,j=1}^da_{ij}\partial^2_{p_ip_j}h-p\cdot \nabla_p h+\frac{d p}{p_0}\cdot \nabla_p h-\frac{p}{p_0}\cdot\nabla_x h+\nabla_x V \cdot \nabla_p h,
\end{align}
where we used 
\begin{equation}\label{aux eq}
\sum_{i=1}^d a_{ij} \frac{p_i}{p_0}=p_j,\, \, \, \, \, \sum_{i=1}^d \partial_{p_i}a_{ij}= \frac{d p_j}{p_0}. 
\end{equation}
We denote $u\colonequals \begin{pmatrix}
\nabla_x h\\ \nabla_p h
\end{pmatrix}, $ $\displaystyle u_1\colonequals \nabla_x h,$ $\displaystyle u_2\colonequals \nabla_p h.$ We get from \eqref{th} that  $u_1$ and $u_2$ satisfy 
\begin{equation*}\partial_t u_1= \sum_{i,j=1}^d a_{ij}\partial_{p_ip_j}^2 u_1-\sum_{j=1}^d p_j \partial_{p_j}u_1+\sum_{j=1}^d \frac{dp_j}{p_0}\partial_{p_j}u_1-\sum_{j=1}^d \frac{p_j}{p_0}\partial_{x_j}u_1 +\sum_{j=1}^d\partial_{x_j} V\partial_{p_j}u_1+ \frac{\partial^2 V}{\partial x^2}u_2,
\end{equation*}
\begin{multline*}
 \partial_t u_2=\sum_{i,j=1}^d a_{ij}\partial_{p_ip_j}^2 u_2-\sum_{j=1}^d p_j \partial_{p_j}u_2+\sum_{j=1}^d \frac{d p_j}{p_0}\partial_{p_j}u_2-\sum_{j=1}^d \frac{p_j}{p_0}\partial_{x_j}u_2+\sum_{j=1}^d \partial_{x_j}V\partial_{p_j}u_2\\+ \sum_{i,j=1}^d \nabla_p a_{ij}\partial_{p_ip_j}^2 h-\left(I-\frac{d}{p_0}\left(I-\frac{p\otimes p}{p_0^2}\right)\right)u_2-\frac{1}{p_0}\left(I-\frac{p\otimes p}{p_0^2}\right) u_1.
 \end{multline*}
 These equations can be written  with respect to $u=\begin{pmatrix}
 u_1\\u_2
\end{pmatrix}:  $
 \begin{multline*}\partial_t u=\sum_{i,j=1}^d a_{ij}\partial_{p_ip_j}^2 u-\sum_{j=1}^d p_j \partial_{p_j}u+\sum_{j=1}^d \frac{d p_j}{p_0}\partial_{p_j}u-\sum_{j=1}^d \frac{p_j}{p_0}\partial_{x_j}u+ \sum_{j=1}^d\partial_{x_j} V\partial_{p_j}u\\-Q^Tu+\begin{pmatrix} 0\\\sum_{i,j=1}^d \nabla_p a_{ij}\partial^2_{p_ip_j}h
\end{pmatrix}.
\end{multline*}
 It allows us to compute the time derivative 
   \begin{align}\label{time.derS}
 \frac{d}{dt} \mathrm{S}_P[h(t)]&=2\int_{\mathbb{R}^{2d}}
  u^T P  \partial_t u
 f_{\infty}dxdp\nonumber \\
 &= 2\sum_{i,j=1}^d\int_{ \mathbb{R}^{2d}} u^T P
 \partial_{p_ip_j}^2 u a_{ij} f_{\infty}dxdp \nonumber \\&  \, \, \, \, \, \,-2  \sum_{j=1}^d\int_{\mathbb{R}^{2d}} u^T P \partial_{p_j}u p_j f_{\infty}dxdp+ 2d\sum_{j=1}^d \int_{ \mathbb{R}^{2d}} u^TP
\partial_{p_j}u \frac{p_j}{p_0} f_{\infty}dxdp\nonumber \\& \, \, \, \, \, \, -2\sum_{j=1}^d\int_{\mathbb{R}^{2d}} u^TP
   \partial_{x_j}u \frac{p_j}{p_0} f_{\infty}dxdp+2\sum_{j=1}^d\int_{\mathbb{R}^{2d}} u^TP
   \partial_{p_j}u \partial_{x_j}V f_{\infty}dxdp\nonumber \\ &  \, \, \, \, \, \,-\int_{ \mathbb{R}^{2d}} u^T \{QP+PQ^T\}u f_{\infty}dxdp+2\int_{ \mathbb{R}^{2d}} u^T P\begin{pmatrix}0\\\sum_{i,j=1}^d \nabla_p a_{ij}\partial^2_{p_ip_j}h
\end{pmatrix} f_{\infty}dxdp.
 \end{align}
  First, we consider the term in the second line of \eqref{time.derS}, and we integrate by parts using  \eqref{aux eq} and $\partial_{p_i}f_{\infty}=-\frac{p_i}{p_0}f_{\infty}:$
  \begin{align}\label{term1}
  2\sum_{i,j=1}^d\int_{ \mathbb{R}^{2d}}& u^T P
 \partial_{p_ip_j}^2 u a_{ij} f_{\infty}dxdp\nonumber \\
& =-2\sum_{i,j=1}^d\int_{ \mathbb{R}^{2d}} (\partial_{p_i}u)^T P
 \partial_{p_j} u a_{ij} f_{\infty}dxdv-2\sum_{i,j=1}^d\int_{ \mathbb{R}^{2d}} u^T P
 \partial_{p_j} u \partial_{p_i}a_{ij} f_{\infty}dxdp
 \nonumber \\&\, \, \, \, \, \,\, +2\sum_{i,j=1}^d\int_{ \mathbb{R}^{2d}} u^T P
 \partial_{p_j} u a_{ij} \frac{p_i}{p_0} f_{\infty}dxdp
 -2\sum_{i,j=1}^d\int_{ \mathbb{R}^{2d}} u^T \partial_{p_i}P
 \partial_{p_j} u a_{ij} f_{\infty}dxdp\nonumber \\
&=-2\sum_{i,j=1}^d\int_{\mathbb{R}^{2d}} (\partial_{p_i}u)^T P
 \partial_{p_j} u a_{ij} f_{\infty}dxdv-2\sum_{j=1}^d\int_{ \mathbb{R}^{2d}} u^T P
 \partial_{p_j} u \frac{dp_j}{p_0} f_{\infty}dxdp\nonumber \\
 &\, \, \, \, \, \,\, +2\sum_{j=1}^d\int_{ \mathbb{R}^{2d}} u^T P
 \partial_{p_j} u p_j f_{\infty}dxdp-2\sum_{i,j=1}^d\int_{\mathbb{R}^{2d}} u^T \partial_{p_i}P
 \partial_{p_j} u a_{ij} f_{\infty}dxdp.
 \end{align}
 We now compute the last integral in \eqref{term1} by integrating by parts
 \begin{multline*}
- 2\sum_{i,j=1}^d\int_{ \mathbb{R}^{2d}} u^T \partial_{p_i}P
 \partial_{p_j} u a_{ij} f_{\infty}dxdp\\
 =2\sum_{i,j=1}^d\int_{ \mathbb{R}^{2d}} ( \partial_{p_j}u)^T \partial_{p_i}P
 u a_{ij} f_{\infty}dxdp+2\sum_{i,j=1}^d\int_{ \mathbb{R}^{2d}} u^T \partial_{p_j}(\partial_{p_i}P a_{ij} f_{\infty}) udxdp.
 \end{multline*}
Since $P$ is symmetric and $a_{ij}=a_{ji},$ this equation implies \begin{equation}\label{derterm}
- 2\sum_{i,j=1}^d\int_{ \mathbb{R}^{2d}} u^T \partial_{p_i}P
 \partial_{p_j} u a_{ij} f_{\infty}dxdp=\int_{\mathbb{R}^{2d}} u^T \left(\sum_{i,j=1}^d\frac{1}{f_{\infty}}\partial_{p_j}(\partial_{p_i}P a_{ij} f_{\infty})\right) uf_{\infty}dxdp.
\end{equation}  
\eqref{time.derS}, \eqref{term1}, and \eqref{derterm} show that  the sum of the terms in the second and third lines of \eqref{time.derS} equals
\begin{multline}\label{1im}
2\sum_{i,j=1}^d\int_{ \mathbb{R}^{2d}} u^T P
 \partial_{p_ip_j}^2 u a_{ij} f_{\infty}dxdp\\-2  \sum_{j=1}^d\int_{\mathbb{R}^{2d}} u^T P \partial_{p_j}u p_j f_{\infty}dxdp+ 2d\sum_{j=1}^d \int_{ \mathbb{R}^{2d}} u^TP
\partial_{p_j}u p_j\frac{1}{p_0} f_{\infty}dxdp
\\=-2\sum_{i,j=1}^d\int_{ \mathbb{R}^{2d}} (\partial_{p_i}u)^T P
 \partial_{p_j} u a_{ij} f_{\infty}dxdv+\int_{ \mathbb{R}^{2d}} u^T \left(\sum_{i,j=1}^d\frac{1}{f_{\infty}}\partial_{p_j}(\partial_{p_i}P a_{ij} f_{\infty})\right) uf_{\infty}dxdp.
 \end{multline}
  We consider  the terms in the fourth line of \eqref{time.derS} and itegrate by parts in both terms:
   \begin{multline*}
 -2\sum_{j=1}^d\int_{\mathbb{R}^{2d}} u^TP
   \partial_{x_j}u \frac{p_j}{p_0} f_{\infty}dxdp+2\sum_{j=1}^d\int_{\mathbb{R}^{2d}} u^TP
   \partial_{p_j}u \partial_{x_j}V f_{\infty}dxdp\\
   =2\sum_{j=1}^d\int_{\mathbb{R}^{2d}} (\partial_{x_j}u)^TP
u \frac{p_j}{p_0} f_{\infty}dxdp+ 2\sum_{j=1}^d\int_{\mathbb{R}^{2d}} u^T\partial_{x_j}P
u \frac{p_j}{p_0} f_{\infty}dxdp\\
   -2\sum_{j=1}^d\int_{\mathbb{R}^{2d}} (\partial_{p_j}u)^TP
u \partial_{x_j}V f_{\infty}dxdp-2\sum_{j=1}^d\int_{\mathbb{R}^{2d}} u^T\partial_{p_j}P
u \partial_{x_j}V f_{\infty}dxdp.
 \end{multline*}
Since $P$ is symmetric, we conclude from this last equation: 
\begin{multline}\label{2im}
 -2\sum_{j=1}^d\int_{\mathbb{R}^{2d}} u^TP
   \partial_{x_j}u \frac{p_j}{p_0} f_{\infty}dxdp+2\sum_{j=1}^d\int_{\mathbb{R}^{2d}} u^TP
   \partial_{p_j}u \partial_{x_j}V f_{\infty}dxdp\\
   =\sum_{j=1}^d\int_{\mathbb{R}^{2d}} u^T\left(\partial_{x_j}P
 \frac{p_j}{p_0}-\partial_{p_j}P
 \partial_{x_j}V\right)u f_{\infty}dxdp.
\end{multline}
\eqref{time.derS}, \eqref{1im}, and \eqref{2im} yield the claimed equality \eqref{derivS}.
\end{proof}
\subsection{The second Lyapunov functional}
Let  $\mathrm{H}_{\delta}$  and $\mathrm{S}_{P}$ be, respectively, the functionals defined in \eqref{DMS funct} and \eqref{S_P}. For some $\gamma>0,$ we define the functional 
\begin{equation*}
\mathrm{E}[h]\colonequals \gamma ||h||^2_{L^2(\mathbb{R}^{2d}, f_{\infty})}+\mathrm{H}_{\delta}[h]+\mathrm{S}_P[h].
\end{equation*}
 It is clear that $\mathrm{E}$ depends on the parameters $\gamma,\, \delta,$  and the matrix $P.$ Let $\delta_0$ be given in \eqref{delta0} and choose any $\delta\in (0, \delta_0).$ Then the decay estimates of Theorem \ref{DMS}  holds for the relativistic Fokker-Planck equation \eqref{Eqh}.   Our goal is to choose  a suitable $\gamma>0$ and  $P$ so that $\mathrm{E}[h]$ is equivalent to $||h||^2_{\mathscr{H}^1(\mathbb{R}^{2d}, f_{\infty})}$  and satisfies a Gr\"onwall inequality (see \eqref{dt E<-labmda E} below) for the solution $h$ of \eqref{Eqh}.  We choose 
\begin{equation}\label{Pmat}
P=P(x,p)\colonequals \begin{pmatrix}
\frac{2\varepsilon^3}{V_0^3p_0^3}(I- \frac{p\otimes p}{p_0^2})& \frac{\varepsilon^2}{V_0^2p_0^2}I\\
\frac{\varepsilon^2}{V_0^2p_0^2} I& \frac{2\varepsilon}{V_0 p_0} (I+p\otimes p)
\end{pmatrix},
\end{equation}
where  $\varepsilon$ is a positive constant which will be fixed later. 
 We note the matrices $I- \frac{p\otimes p}{p_0^2}$ and $I+p\otimes p$ are positive definite, and $
 \left(I- \frac{p\otimes p}{p_0^2}\right)^{-1}=I+p\otimes p.$
  This helps to check that $P$ is positive definite for all $x,\, p\in \mathbb{R}^d$ and 
\begin{equation}\label{.<P<.}0<\begin{pmatrix}
\frac{ \varepsilon^3}{V_0^3p_0^{3}} (I-\frac{p\otimes p}{p_0^2})&0\\0&   \frac{\varepsilon}{V_0 p_0} (I+p\otimes p)
\end{pmatrix}\leq P\leq \begin{pmatrix}
\frac{3 \varepsilon^3}{V_0^3p_0^{3}} (I-\frac{p\otimes p}{p_0^2})&0\\0&  \frac{3\varepsilon}{V_0p_0} (I+p\otimes p)
\end{pmatrix}. 
\end{equation}
 We now state the main result of this section.
\begin{theorem}\label{th:hypocoercivity}  
Let Assumption \ref{Assumptions i-ii} hold and  $h$ be the solution of \eqref{Eqh} with  initial data  $h_0 \in \mathscr{H}^1( \mathbb{R}^{2d}, f_{\infty})
$ such that $\int_{ \mathbb{R}^{2d}}h_0 f_{\infty} dxdp=0.$
If $\varepsilon>0$ in \eqref{Pmat} is small enough, then
\begin{equation}\label{dt E<-labmda E}
\frac{d}{dt}\mathrm{E}[h(t)]\leq -2\Lambda \mathrm{E}[h(t)], \, \, \, \, \, \forall\, t> 0
\end{equation} holds for a positive constant $\Lambda$ (independent of $h_0$).
 In particular, \begin{equation}\label{Cor:H^1 decay}\mathrm{E}[h(t)]\leq e^{-2\Lambda t} \mathrm{E}[h_0], \, \, \, \, \,\forall\,  t\geq 0.
 \end{equation}  
\end{theorem}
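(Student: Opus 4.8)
The plan is to show that the three ingredients of $\mathrm{E}=\gamma\|h\|^2_{L^2(\mathbb{R}^{2d},f_\infty)}+\mathrm{H}_\delta[h]+\mathrm{S}_P[h]$ have time derivatives which, added together, are dominated by $-2\Lambda\,\mathrm{E}[h(t)]$. I would first record the equivalence $\mathrm{E}[h]\sim\|h\|^2_{\mathscr{H}^1(\mathbb{R}^{2d},f_\infty)}$: Theorem \ref{DMS}(i) gives $\tfrac{2-\delta}{4}\|h\|^2\le\mathrm{H}_\delta[h]\le\tfrac{2+\delta}{4}\|h\|^2$, while the sandwich bound \eqref{.<P<.} shows that $\mathrm{S}_P[h]$ is comparable, with constants depending on the (now fixed) $\varepsilon$, to the two gradient integrals in \eqref{H^1-norm}; adding $\gamma\|h\|^2$ only rescales the $L^2$ part. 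Then I would differentiate along the flow of \eqref{Eqh}: Lemma \ref{lemma ||h||^2} gives $\gamma\tfrac{d}{dt}\|h\|^2=-2\gamma\int\nabla_p^T h\,D\,\nabla_p h\,f_\infty$; Theorem \ref{DMS}(ii) gives $\tfrac{d}{dt}\mathrm{H}_\delta[h]\le-2\lambda\,\mathrm{H}_\delta[h]$ (applicable because Proposition \ref{proporties} and Lemma \ref{LEMMA} verify conditions \eqref{mic.coer}--\eqref{boun.aux}); and Lemma \ref{lemma main} gives the exact identity \eqref{derivS} for $\tfrac{d}{dt}\mathrm{S}_P[h]$. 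The task reduces to controlling the four terms on the right of \eqref{derivS} using the manifestly negative quantities available, namely the first term of \eqref{derivS} and $-2\gamma\int\nabla_p^T h\,D\,\nabla_p h\,f_\infty$.

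I would dispose of the easy terms in \eqref{derivS} first. The first term, $-2\int\sum_{ij}(\cdot)^TP(\cdot)a_{ij}f_\infty$, is nonpositive since $P$ and $D=(a_{ij})$ are positive definite, and it dominates $\int(\text{weighted})(|\partial^2_p h|^2+|\partial_x\partial_p h|^2)f_\infty$; I keep a fraction of it to absorb the second term, which pairs $\nabla h$ with $\partial^2_{p_ip_j}h$ through the bounded coefficients $\nabla_p a_{ij}$, a Young splitting sending the second-derivative part back into the first term and leaving an $O(\varepsilon)$ contribution in the first derivatives of $h$. The fourth term contains only first derivatives of $h$ weighted by $\partial_x P$, $\partial_p P$ and $\tfrac1{f_\infty}\partial_{p_j}(\partial_{p_i}P\,a_{ij}f_\infty)$; since $P$ is an explicit function of $(x,p)$ with entries scaling like $\varepsilon^k/(V_0^{k}p_0^{k})$ for $k=1,2,3$, its $x$- and $p$-derivatives scale the same way — here the growth bound $\|\partial^2 V/\partial x^2\|_F\le c_3(1+|\nabla_x V|)\le c_3\sqrt2\,V_0$ from Assumption \ref{Assumptions i-ii}(ii) is exactly what keeps $\partial_x V_0/V_0$ bounded — so the fourth term is bounded by a fixed constant times $\int(\varepsilon^3\text{-weighted }|\nabla_x h|^2+\varepsilon^2\text{-weighted cross terms}+\varepsilon\text{-weighted }|\nabla_p h|^2)f_\infty$.

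The decisive term is the third, $-\int u^T(QP+PQ^T)u\,f_\infty$ with $u=(\nabla_x h,\nabla_p h)^T$, and the core is a block computation of $QP+PQ^T$ using the identity $(I-\tfrac{p\otimes p}{p_0^2})(I+p\otimes p)=I$. One finds that its $(1,1)$-block is exactly $\tfrac{2\varepsilon^2}{V_0^2p_0^3}(I-\tfrac{p\otimes p}{p_0^2})$, which is positive definite and, because $V_0\ge1$, dominates the $(1,1)$-weight $\tfrac{2\varepsilon^3}{V_0^3p_0^3}(I-\tfrac{p\otimes p}{p_0^2})$ of $\mathrm{S}_P$ by the large factor $1/\varepsilon$, giving strong coercivity in the $x$-gradient. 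Its $(1,2)$-block has leading part $\tfrac{2\varepsilon}{V_0p_0^2}I$ (plus $O(\varepsilon^2)$ and $O(\varepsilon^3)$ pieces, the latter carrying $\partial^2V$), and its $(2,2)$-block has leading part $\tfrac{4\varepsilon}{V_0p_0}[(1-\tfrac{d}{p_0})I+p\otimes p]$, which is sign-indefinite (negative near $p=0$ when $d\ge2$); both are handled by Young's inequality, splitting the off-diagonal term between the positive $(1,1)$-block (absorbing the $|\nabla_x h|^2$-part, of size $\le\theta\varepsilon^2$) and $-2\gamma\int\nabla_p^T h\,D\,\nabla_p h\,f_\infty$ (which, since $D=\tfrac{I+p\otimes p}{p_0}\ge\tfrac{1}{p_0}I$, absorbs every $|\nabla_p h|^2$-contribution, including the indefinite $(2,2)$-block). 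I would therefore first fix $\gamma$ as a moderate constant — large enough to beat the $O(1)$ coefficients left by these Young splittings and to control the $\nabla_p$-part of $\mathrm{S}_P$ via $2\gamma\int\nabla_p^T h\,D\,\nabla_p h\,f_\infty\ge\tfrac{\gamma}{\varepsilon}\cdot(\nabla_p\text{-part of }\mathrm{S}_P)$ — and only then take $\varepsilon$ small enough that all $O(\varepsilon)$ and $O(\varepsilon^2)$ error contributions to the $x$-gradient are dominated by the $(1,1)$-block. The outcome is $\gamma\tfrac{d}{dt}\|h\|^2+\tfrac{d}{dt}\mathrm{S}_P[h]\le -c\,\mathrm{S}_P[h]$ for some $c>0$; adding $\tfrac{d}{dt}\mathrm{H}_\delta[h]\le-2\lambda\mathrm{H}_\delta[h]$ and using $\gamma\|h\|^2\le\tfrac{4\gamma}{2-\delta}\mathrm{H}_\delta[h]$ yields \eqref{dt E<-labmda E} with $2\Lambda=\min\{c,\,2\lambda(1+\tfrac{4\gamma}{2-\delta})^{-1}\}$, and \eqref{Cor:H^1 decay} follows by Gr\"onwall's lemma. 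The main obstacle is precisely the bookkeeping in the third and fourth terms: one must check that every error term is of strictly higher order in $\varepsilon$ than the coercive $(1,1)$-block, or is of $|\nabla_p h|^2$-type absorbable by the $\gamma$-term; this hinges on the exact $\varepsilon$-scaling built into $P$ in \eqref{Pmat} and on the Hessian growth bound in Assumption \ref{Assumptions i-ii}(ii). A minor additional point is justifying the integrations by parts in Lemma \ref{lemma main} for $\mathscr{H}^1$ initial data, done by a standard regularization argument.
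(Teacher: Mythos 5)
Your proposal is correct and follows essentially the same route as the paper's proof: the same functional $\mathrm{E}$, the same three differentiation lemmas, the same treatment of the second-derivative terms by Young's inequality, the same bounds on $\partial_x P$, $\partial_p P$ and $\partial_{p_j}(\partial_{p_i}P\,a_{ij}f_\infty)$ (which the paper packages as Lemmas \ref{heavy comp}--\ref{heavy comp2}), the same block computation of $QP+PQ^T$ with the coercive $(1,1)$-block, and the same final step of choosing $\gamma$ large and $\varepsilon$ small so the resulting matrix dominates $CP$. The only differences are presentational (the paper assembles everything into the matrix $P_1$ and the $X,Y,Z$ blocks, while you describe the equivalent Young splittings directly), and your note about justifying the integrations by parts in Lemma \ref{lemma main} is a reasonable technical caveat that the paper handles implicitly.
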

\begin{proof}
Theorem\,\ref{DMS} provides 
\begin{equation*}\frac{d}{dt}\mathrm{H}_{\delta}[h(t)]\leq -2\lambda \mathrm{H}_{\delta}[h(t)] \leq -\frac{\lambda (2-\delta)}{2}\int_{ \mathbb{R}^{2d}} h^2 f_{\infty}dxdp. 
\end{equation*}
 This estimate, Lemma \ref{lemma ||h||^2}, and Lemma \ref{lemma main} show that 
\begin{multline}\label{dt E}
\frac{d }{dt}\mathrm{E}[h(t)]\leq -\frac{\lambda (2-\delta)}{2}\int_{ \mathbb{R}^{2d}} h^2 f_{\infty}dxdp \\
-2\int_{ \mathbb{R}^{2d}}\left\{\sum_{i, j=1}^d \begin{pmatrix}
\nabla_x ( \partial_{p_i} h)\\ \nabla_p(\partial_{p_i} h)
\end{pmatrix}^T  P\begin{pmatrix}
\nabla_x (\partial_{p_j} h)\\\nabla_p(\partial_{p_j} h)
\end{pmatrix} a_{ij}\right\}f_{\infty}dxdp\\+2\int_{ \mathbb{R}^{2d}}  \begin{pmatrix}
\nabla_x h\\ \nabla_p h
\end{pmatrix}^T P \begin{pmatrix} 0\\\sum_{i,j=1}^d \nabla_p a_{ij}\partial^2_{p_ip_j}h
\end{pmatrix} f_{\infty}dxdp\\
-\int_{ \mathbb{R}^{2d}}  \begin{pmatrix}
\nabla_x h\\ \nabla_p h
\end{pmatrix}^T  \left\{2\gamma\begin{pmatrix}
0& 0\\
0& D
\end{pmatrix}+QP+PQ^T\right\}\begin{pmatrix}
\nabla_x h\\ \nabla_p h
\end{pmatrix} f_{\infty}dxdp\\
+\int_{\mathbb{R}^{2d}}  \begin{pmatrix}
\nabla_x h\\ \nabla_p h
\end{pmatrix}^T  \left\{\sum_{i=1}^d\left(\frac{p_i}{p_0} \partial_{x_i} P -\partial_{x_i} V  \partial_{p_i} P\right)+\sum_{i,j=1}^d \frac{1}{f_{\infty}}\partial_{p_j}(\partial_{p_i}Pa_{ij}f_{\infty})\right\}\begin{pmatrix}
\nabla_x h\\ \nabla_p h
\end{pmatrix} f_{\infty}dxdp.
\end{multline}

 \subsubsection*{Step 1, estimates on the second order derivatives:}
 We first consider the term in the third line of \eqref{dt E}: 
 \begin{multline}\label{21}
 2\int_{ \mathbb{R}^{2d}} \begin{pmatrix}
\nabla_x h\\ \nabla_p h
\end{pmatrix}^T P \begin{pmatrix} 0\\ \sum_{i,j=1}^d \nabla_p a_{ij}\partial^2_{p_ip_j}h
\end{pmatrix} f_{\infty}dxdp\\
=2\varepsilon^2 \sum_{i,j=1}^d\int_{ \mathbb{R}^{2d}} 
\frac{1}{V^2_0p_0^2}\nabla_x h \cdot \nabla_p a_{ij}\partial^2_{p_ip_j}h
 f_{\infty}dxdp+ 4\varepsilon \sum_{i,j=1}^d\int_{\mathbb{R}^{2d}} 
\frac{1}{V_0p_0}\nabla_p^T h (I+p\otimes p) \nabla_p a_{ij}\partial^2_{p_ip_j}h
 f_{\infty}dxdp.
 \end{multline}
 Let $w\colonequals \begin{pmatrix}
  \nabla_p (\partial_{p_1}h)\\ \vdots\\  \nabla_p (\partial_{p_d}h)
 \end{pmatrix}\in \mathbb{R}^{d^2}$ and $z\colonequals \begin{pmatrix}
  \nabla_x h \cdot \nabla_p a_{11}\\ \nabla_x h \cdot \nabla_p a_{12}\\ \vdots\\ \nabla_x h \cdot \nabla_p a_{dd} 
 \end{pmatrix}\in \mathbb{R}^{d^2}. $
 Using the the relation  $(D\otimes D)^{-1}=D^{-1}\otimes D^{-1}>0$ (see \cite[Corollary 4.2.11]{MA} ) and applying \eqref{Mat.ineq}  (from Appendix) to $w$ and $z,$ we obtain 
 \begin{multline}\label{22}
 2\varepsilon^2 \sum_{i,j=1}^d\int_{ \mathbb{R}^{2d}} 
\frac{1}{V^2_0p_0^2}\nabla_x h \cdot \nabla_p a_{ij}\partial^2_{p_ip_j}h
 f_{\infty}dxdp=2\varepsilon^2 \int_{ \mathbb{R}^{2d}} 
\frac{1}{V^2_0p_0^2} z\cdot w
 f_{\infty}dxdp\\
 \leq \frac{\varepsilon^3}{\eta} \int_{ \mathbb{R}^{2d}} 
\frac{1}{V_0^3p_0^3}z^T D^{-1}\otimes D^{-1}z  f_{\infty}dxdp+\varepsilon \eta  \int_{\mathbb{R}^{2d}} 
\frac{1}{V_0p_0} w^T D\otimes Dw
  f_{\infty}dxdp,
 \end{multline}
 where $\eta>0$  will be fixed later.
 We use $D^{-1}={p_0}\left(I-\frac{p\otimes p}{p_0^2}\right)$ and \eqref{sum<d()}  (from Appendix)  to estimate  
 \begin{align*}
 z^T D^{-1}\otimes D^{-1}z &=\sum _{k,l,i,j=1}^d{p_0^2} \left(\delta_{kl}-\frac{p_kp_l}{p_0^2}\right) \left(\delta_{ij}-\frac{p_ip_j}{p_0^2}\right)(\nabla_x h \cdot \nabla_p a_{lj})(\nabla_x h \cdot \nabla_p a_{ki})\\
 &=\nabla_x^T h\left\{\sum _{k,l,i,j=1}^d {p_0^2} \left(\delta_{kl}-\frac{p_kp_l}{p_0^2}\right) \left(\delta_{ij}-\frac{p_ip_j}{p_0^2}\right)\nabla_p a_{lj}\otimes  \nabla_p a_{ki}\right\}\nabla_x h\\
 &\leq d \nabla_x^T h \left(I-\frac{p\otimes p}{p_0^2}\right) \nabla_x h.
 \end{align*}
 \eqref{22} and the last estimate imply
 \begin{multline}\label{222}
 2\varepsilon^2 \sum_{i,j=1}^d\int_{ \mathbb{R}^{2d}} 
\frac{1}{V^2_0p_0^2}\nabla_x h \cdot \nabla_p a_{ij}\partial^2_{p_ip_j}h
 f_{\infty}dxdp\\
 \leq \frac{\varepsilon^3d}{\eta} \int_{ \mathbb{R}^{2d}} 
\frac{1}{V_0^3p_0^3}\nabla_x^T h \left(I-\frac{p\otimes p}{p_0^2}\right) \nabla_x h f_{\infty}dxdp+\varepsilon \eta  \int_{\mathbb{R}^{2d}} 
\frac{1}{V_0p_0} w^T D\otimes Dw
  f_{\infty}dxdp.
 \end{multline}
 We now work on the last term of \eqref{21}. We define $$z_1\colonequals \begin{pmatrix}
 \nabla_p^T h (I+p\otimes p) \nabla_p a_{11}\\ \nabla_p^T h (I+p\otimes p) \nabla_p a_{12}\\ \vdots\\ \nabla_p^T h (I+p\otimes p) \nabla_p a_{dd} 
 \end{pmatrix}\in \mathbb{R}^{d^2}. $$
 Similar to \eqref{22}, we estimate 
  \begin{multline}\label{23}
 4\varepsilon \sum_{i,j=1}^d\int_{\mathbb{R}^{2d}} 
\frac{1}{V_0p_0}\nabla_p^T h (I+p\otimes p) \nabla_p a_{ij}\partial^2_{p_ip_j}h
 f_{\infty}dxdp= 4\varepsilon\int_{\mathbb{R}^{2d}} 
\frac{1}{V_0p_0} z_1\cdot w
 f_{\infty}dxdp\\
 \leq \frac{2\varepsilon }{\eta}\int_{\mathbb{R}^{2d}} 
\frac{1}{V_0p_0} z_1^TD^{-1}\otimes D^{-1}z_1
 f_{\infty}dxdp+ 2 \varepsilon \eta \int_{\mathbb{R}^{2d}}\frac{1}{V_0p_0} 
w^TD\otimes D w
  f_{\infty}dxdp.
 \end{multline} 
Using  \eqref{sum d(I+pop)}  (from Appendix) we estimate  
 \begin{align*}
 z_1^T D^{-1}&\otimes D^{-1}z_1 =\sum _{k,l,i,j=1}^d{p_0^2} \left(\delta_{kl}-\frac{p_kp_l}{p_0^2}\right) \left(\delta_{ij}-\frac{p_ip_j}{p_0^2}\right)( \nabla_p^T h (I+p\otimes p) \nabla_p a_{lj})( \nabla_p^T h (I+p\otimes p) \nabla_p a_{ki})\\
 &=\nabla_p^T h\left\{\sum _{k,l,i,j=1}^d {p_0^2} \left(\delta_{kl}-\frac{p_kp_l}{p_0^2}\right) \left(\delta_{ij}-\frac{p_ip_j}{p_0^2}\right)  ((I+p\otimes p) \nabla_p a_{lj})\otimes   ( (I+p\otimes p) \nabla_p a_{ki})\right\}\nabla_p h\\
 &\leq d \nabla_p^T h \left(I+p\otimes p\right) \nabla_p h.
 \end{align*}
 \eqref{23} and the last estimate imply
 \begin{multline}\label{233}
 4\varepsilon \sum_{i,j=1}^d\int_{\mathbb{R}^{2d}} 
\frac{1}{V_0p_0}\nabla_p^T h (I+p\otimes p) \nabla_p a_{ij}\partial^2_{p_ip_j}h
 f_{\infty}dxdp\\
 \leq \frac{2\varepsilon d}{\eta}\int_{\mathbb{R}^{2d}} 
\frac{1}{V_0p_0} \nabla_p^T h \left(I+p\otimes p\right) \nabla_p h
 f_{\infty}dxdp+ 2 \varepsilon \eta  \int_{\mathbb{R}^{2d}}\frac{1}{V_0p_0} 
w^TD\otimes D w
  f_{\infty}dxdp.
 \end{multline} 
Then \eqref{21}, \eqref{222}, and \eqref{233} imply 
\begin{multline}\label{24}
 2\int_{ \mathbb{R}^{2d}} \begin{pmatrix}
\nabla_x h\\ \nabla_p h
\end{pmatrix}^T P \begin{pmatrix} 0\\ \sum_{i,j=1}^d \nabla_p a_{ij}\partial^2_{p_ip_j}h
\end{pmatrix} f_{\infty}dxdp\\ 
\leq  \int_{ \mathbb{R}^{2d}}\begin{pmatrix}
\nabla_x  h\\ \nabla_p h
\end{pmatrix}^T  
\begin{pmatrix}
\frac{\varepsilon^3 d}{\eta V_0^3 p_0^3}(I-\frac{p\otimes p}{p_0^2})& 0\\
0& \frac{2\varepsilon d}{\eta V_0p_0} (I+p\otimes p)
\end{pmatrix} \begin{pmatrix}
\nabla_x  h\\ \nabla_p h
\end{pmatrix} f_{\infty}dxdp\\+ 3 \varepsilon \eta \int_{\mathbb{R}^{2 d}}\frac{1}{V_0p_0} 
w^TD\otimes D w
  f_{\infty}dxdp.
\end{multline}
 Let $u\colonequals \begin{pmatrix}
 \nabla_x (\partial_{p_1}h)\\ \nabla_p (\partial_{p_1}h)\\ \vdots\\ \nabla_x (\partial_{p_d}h)\\ \nabla_p (\partial_{p_d}h)
 \end{pmatrix}\in \mathbb{R}^{2d^2}.$
and $\tilde{P}\colonequals D\otimes P=\begin{pmatrix}
a_{11}P& \cdots & a_{1d}P
\\
\cdots&\cdots&\cdots&\\
a_{1d}P& \cdots & a_{dd}P
\end{pmatrix}\in \mathbb{R}^{2d^2\times 2d^2}.
$
Then we can write 
\begin{equation}\label{refor}
-2\int_{ \mathbb{R}^{2d}}\left\{\sum_{i, j=1}^d \begin{pmatrix}
\nabla_x ( \partial_{p_i} h)\\ \nabla_p(\partial_{p_i} h)
\end{pmatrix}^T  P\begin{pmatrix}
\nabla_x (\partial_{p_j} h)\\\nabla_p(\partial_{p_j} h)
\end{pmatrix} a_{ij}\right\}f_{\infty}dxdp=-2\int_{ \mathbb{R}^{2d}}u^T\tilde{P}uf_{\infty}dxdp.
\end{equation}
Since $P$ and $D$ are positive definite, $D \otimes P$ is also positive definite, see  \cite[Corollary 4.2.13]{MA}. Moreover, $P$ can be written as a sum of two positive semi-definite matrices: 
$$P= \begin{pmatrix}
\frac{2\varepsilon^3}{V_0^3p_0^{3}} (I-\frac{p\otimes p}{p_0^2})& \frac{\varepsilon^2}{V^2_0p_0^2}I\\
\frac{\varepsilon^2}{V_0^2p_0^2} I&  \frac{\varepsilon}{V_0p_0}(I+p\otimes p)
\end{pmatrix}+\begin{pmatrix}
0&0\\
0& \frac{\varepsilon}{V_0p_0}(I+p\otimes p)
\end{pmatrix}. $$ This implies
$$\tilde{P}=D\otimes P\geq  D \otimes \begin{pmatrix}
0&0\\
0& \frac{\varepsilon}{V_0p_0}(I+p\otimes p)
\end{pmatrix}. $$
This inequality and \eqref{refor} show that 
\begin{multline}\label{25}
-2\int_{\mathbb{R}^{2d}}\left\{\sum_{i, j=1}^d \begin{pmatrix}
\nabla_x ( \partial_{p_i} h)\\ \nabla_p(\partial_{p_i} h)
\end{pmatrix}^T   P\begin{pmatrix}
\nabla_x (\partial_{p_j} h)\\\nabla_p(\partial_{p_j} h)
\end{pmatrix} a_{ij}\right\}f_{\infty}dxdp\\ \leq -2\int_{\mathbb{R}^{2d}}u^T D \otimes \begin{pmatrix}
0&0\\
0& \frac{\varepsilon}{V_0p_0}(I+p\otimes p)
\end{pmatrix} uf_{\infty}dxdp=-2\varepsilon \int_{ \mathbb{R}^{2d}} \frac{1}{V_0}w^T D \otimes D
 wf_{\infty}dxdp.
\end{multline}
We choose  any $\eta\in (0, \frac{2}{3}],$ and hence $2-\frac{3\eta}{p_0}\geq 0$  for all $p\in \mathbb{R}^d.$ Then \eqref{25} and \eqref{24} yield
 \begin{multline}\label{26}
 -2\int_{ \mathbb{R}^{2d}}\left\{\sum_{i, j=1}^d \begin{pmatrix}
\nabla_x ( \partial_{p_i} h)\\ \nabla_p(\partial_{p_i} h)
\end{pmatrix}^T  P\begin{pmatrix}
\nabla_x (\partial_{p_j} h)\\\nabla_p(\partial_{p_j} h)
\end{pmatrix} a_{ij}\right\}f_{\infty}dxdp\\+2\int_{ \mathbb{R}^{2d}} \begin{pmatrix}
\nabla_x h\\ \nabla_p h
\end{pmatrix}^T P \begin{pmatrix} 0\\ \sum_{i,j=1}^d \nabla_p a_{ij}\partial^2_{p_ip_j}h
\end{pmatrix} f_{\infty}dxdp\\ 
\leq  \int_{ \mathbb{R}^{2d}}\begin{pmatrix}
\nabla_x  h\\ \nabla_p h
\end{pmatrix}^T  
\begin{pmatrix}
\frac{\varepsilon^3 d}{\eta V_0^3 p_0^3}(I-\frac{p\otimes p}{p_0^2})& 0\\
0& \frac{2\varepsilon d}{\eta V_0 p_0} (I+p\otimes p)
\end{pmatrix} \begin{pmatrix}
\nabla_x  h\\ \nabla_p h
\end{pmatrix} f_{\infty}dxdp\\- \int_{ \mathbb{R}^{2d}} \varepsilon \left(2-\frac{3\eta}{p_0}\right)\frac{1}{V_0}w^T D \otimes D
 wf_{\infty}dxdp\\ \leq \int_{ \mathbb{R}^{2d}}\begin{pmatrix}
\nabla_x  h\\ \nabla_p h
\end{pmatrix}^T  
\begin{pmatrix}
\frac{\varepsilon^3 d}{\eta V^3_0 p_0^3}(I-\frac{p\otimes p}{p_0^2})& 0\\
0& \frac{2\varepsilon d}{\eta V_0 p_0} (I+p\otimes p)
\end{pmatrix} \begin{pmatrix}
\nabla_x  h\\ \nabla_p h
\end{pmatrix} f_{\infty}dxdp .
 \end{multline}
\subsubsection*{Step 2, Gr\"onwall type inequality:}
\eqref{dt E} and \eqref{26} show 
\begin{equation}\label{dt E1}
\frac{d }{dt}\mathrm{E}[h(t)]\leq -\frac{\lambda (2-\delta)}{2}\int_{ \mathbb{R}^{2d}} h^2 f_{\infty}dxdp 
-\int_{ \mathbb{R}^{2d}}  \begin{pmatrix}
\nabla_x h\\ \nabla_p h
\end{pmatrix}^T  P_1\begin{pmatrix}
\nabla_x h\\ \nabla_p h
\end{pmatrix} f_{\infty}dxdp,
\end{equation}
where
\begin{multline}\label{P_1 mat.}
P_1 \colonequals 2\gamma\begin{pmatrix}
0& 0\\
0& D
\end{pmatrix}+(QP+PQ^T)-\sum_{i=1}^d\left(\frac{p_i}{p_0} \partial_{x_i} P -\partial_{x_i} V  \partial_{p_i} P\right)-\sum_{i,j=1}^d \frac{1}{f_{\infty}}\partial_{p_j}(\partial_{p_i}Pa_{ij}f_{\infty})\\
-\begin{pmatrix}
\frac{\varepsilon^3 d}{\eta V_0^3 p_0^3}(I-\frac{p\otimes p}{p_0^2})& 0\\
0& \frac{2\varepsilon d}{\eta V_0 p_0} (I+p\otimes p)
\end{pmatrix}.
\end{multline}
The first two terms can be rewritten as 
\begin{multline*}
2\gamma \begin{pmatrix}
0& 0\\
0& D
\end{pmatrix}+(QP+PQ^T)\\
=\begin{pmatrix}
\frac{2\varepsilon^2}{V_0^2p_0^3}(I- \frac{p\otimes p}{p_0^2})& -\frac{\varepsilon^2}{V_0^2p_0^3}(I- \frac{p\otimes p}{p_0^2})(\frac{2\varepsilon}{V_0} \frac{\partial^2 V}{\partial x^2}+d I)+(\frac{\varepsilon^2}{V_0^2p_0^2}+\frac{2\varepsilon}{V_0p_0^2})I\\
- \frac{\varepsilon^2}{V_0^2p_0^3}(\frac{2\varepsilon}{V_0} \frac{\partial^2 V}{\partial x^2}+d I)(I- \frac{p\otimes p}{p_0^2})+(\frac{\varepsilon^2}{V_0^2p_0^2}+\frac{2\varepsilon}{V_0p_0^2})I& -\frac{2\varepsilon^2}{V_0^2p_0^2}\frac{\partial^2 V}{\partial x^2}+(\frac{4\varepsilon}{V_0}+2\gamma)\frac{I+p\otimes p}{p_0}-\frac{4\varepsilon d}{V_0p_0^2}I
\end{pmatrix}.
\end{multline*}
Then, using Lemma \ref{heavy comp} (from Appendix) for the forth term of \eqref{P_1 mat.} and Lemma \ref{heavy comp2} (from Appendix) for the third term of \eqref{P_1 mat.}, we obtain that there exist constants $\theta_1,\ \theta_2,\, \theta_3,\,\theta_4>0$ such that 
\begin{equation}\label{P_1>}
P_1\geq \begin{pmatrix}
X& Y^T\\
Y& Z
\end{pmatrix},
\end{equation} 
where $$X\colonequals \left(1-\theta_3 \varepsilon-\frac{\theta_1 \varepsilon}{V_0}-\frac{\varepsilon d}{2\eta V_0}\right)\frac{2\varepsilon^2}{V_0^2p_0^3}\left(I- \frac{p\otimes p}{p_0^2}\right),$$
 $$Y\colonequals - \frac{\varepsilon^2}{V_0^2p_0^3}\left(\frac{2\varepsilon}{V_0} \frac{\partial^2 V}{\partial x^2}+d I\right)\left(I- \frac{p\otimes p}{p_0^2}\right)+\left(\frac{\varepsilon}{V_0}+2\right)\frac{\varepsilon}{V_0p_0^2}I,$$
  $$Z\colonequals  -\frac{2\varepsilon^2}{V_0^2p_0^2}\frac{\partial^2 V}{\partial x^2}+\left(\frac{(4-2\theta_2)\varepsilon}{V_0}+2\gamma-2\theta_4 \varepsilon-\frac{2\varepsilon d}{\eta V_0 } \right)\frac{I+p\otimes p}{p_0}-\frac{4\varepsilon d}{V_0p_0^2}I
.$$
We choose a sufficiently small $\varepsilon>0$ such that 
$$1-\theta_3 \varepsilon-\frac{\theta_1 \varepsilon}{V_0(x)}-\frac{\varepsilon d}{2\eta V_0(x)}>\frac{1}{2}$$
for all $x\in \mathbb{R}^d.$  It is possible to choose such $\varepsilon>0$ because $\frac{1}{V_0(x)}$ is uniformly  bounded for $x\in \mathbb{R}^d.$
Then we have
\begin{equation}\label{X>}
X\geq \frac{\varepsilon^2}{V_0^2p_0^3}\left(I- \frac{p\otimes p}{p_0^2}\right)>0
\end{equation}
for all $ x, p \in \mathbb{R}^d.$ 
Since the elements of the matrix $\frac{1}{V_0} \frac{\partial^2 V}{\partial x^2}$ are bounded (due to Assumption \eqref{cond.V}) and $\frac{1}{p_0^3}\left(I- \frac{p\otimes p}{p_0^2}\right)\leq \frac{1}{p_0^2}I,$ if we (possibly)  choose  $\varepsilon>0$ even  smaller,  we have 
\begin{equation}\label{Y>} \frac{\varepsilon}{V_0p_0^2}I\leq Y\leq  \frac{3\varepsilon}{V_0p_0^2}I
\end{equation}
for all $ x, p \in \mathbb{R}^d.$ 
Similarly, since the elements of the matrix $\frac{1}{V_0} \frac{\partial^2 V}{\partial x^2}$ are bounded and $\frac{1}{p_0^2}\leq \frac{1}{p_0} (I+p\otimes p),$ if we (possibly)  choose $\varepsilon>0$ even smaller, we have 
\begin{equation}\label{Z>}
Z\geq \frac{2\gamma-1}{p_0} (I+p\otimes p)>0
\end{equation}
for all $ x, p \in \mathbb{R}^d.$ 
\eqref{X>}, \eqref{Y>}, and \eqref{Z>} show that, if $\varepsilon>0$ is small enough and $\gamma$ is large enough, then $\begin{pmatrix}
X& Y^T\\
Y& Z
\end{pmatrix}$ is positive definite and  there is a constant $C>0$ such that 
\begin{equation}\label{>CP}
\begin{pmatrix}
X& Y^T\\
Y& Z
\end{pmatrix}\geq CP.
\end{equation} 
We fix $ \varepsilon>0$ and $\gamma>0$ such that \eqref{X>}-\eqref{>CP} hold. 
Then \eqref{dt E1}, \eqref{P_1>}, and \eqref{>CP} imply
\begin{equation*}\label{dt E2}
\frac{d }{dt}\mathrm{E}[h(t)]\leq -\frac{\lambda (2-\delta)}{2}\int_{ \mathbb{R}^{2d}} h^2 f_{\infty}dxdp 
-C\int_{ \mathbb{R}^{2d}}  \begin{pmatrix}
\nabla_x h\\ \nabla_p h
\end{pmatrix}^T  P\begin{pmatrix}
\nabla_x h\\ \nabla_p h
\end{pmatrix} f_{\infty}dxdp.
\end{equation*}
 $\int_{ \mathbb{R}^{2d}} h^2 f_{\infty}dxdp $ and $\mathrm{H}_{\delta}[h]$ are equivalent by Theorem \ref{DMS}. Hence, from the equation above we conclude that there is a constant $\Lambda>0$ such that \eqref{dt E<-labmda E} holds. 
\end{proof}
\begin{Remark}
    Let us explain why we choose the matrix $P$ as in \eqref{Pmat} and comment on  the proof of Theorem \ref{th:hypocoercivity}. Our goal is to choose a positive definite matrix $P$ so that the Gr\"onwall inequality \eqref{dt E<-labmda E} holds. To get such inequality, the estimate in \eqref{dt E} shows that at least we need to construct a positive definite matrix $P$ such that 
    \begin{equation}\label{2gamma}
    2\gamma\begin{pmatrix}
0& 0\\
0& D
\end{pmatrix}+QP+PQ^T
\end{equation}
is positive definite  and bigger than a positive constant times $P.$  We constructed the matrix $P=P(x,p,\varepsilon)$ in \eqref{Pmat} so that this condition holds for sufficiently large $\gamma>0$ and sufficiently small $\varepsilon>0$ (for a similar construction, see \cite[Section 4]{AT}). The dependence of $P$ (as well as the norm in \eqref{H^1-norm}) on the matrix  $$I-\frac{p\otimes p}{p_0^2}$$
and its inverse
$$ I+p\otimes p$$
follows from the dependence of $Q$ on the matrix $I-\frac{p\otimes p}{p_0^2}$ (for the expression of $Q,$ see Lemma \ref{lemma main}). The proof of Theorem \ref{th:hypocoercivity} shows that  this construction of $P$  is actually enough to prove  the Gr\"onwall inequality \eqref{dt E<-labmda E}. The reason is that the term containing
$$  2\gamma\begin{pmatrix}
0& 0\\
0& D
\end{pmatrix}+(QP+PQ^T) $$  is bigger than the remaining terms in \eqref{dt E} for sufficiently large $\gamma>0$ and sufficiently small $\varepsilon>0.$
\end{Remark}

We are now ready to prove Theorem \ref{decay in H^1}.
\begin{proof}[\textbf{Proof of Theorem \ref{decay in H^1}}]
 $\mathrm{H}_{\delta}$ is equivalent to the square of the $L^2-$norm by Theorem\,\ref{DMS}\,$(i).$ This fact and  the inequalities \eqref{.<P<.}  show that $\mathrm{E}$ is equivalent to the $\mathscr{H}^1-$norm. Then the proof follows from  \eqref{Cor:H^1 decay}. 
\end{proof}
\section{Hypoelliptic regularity}\label{sec:hypoelliptic}
In this section, we prove Theorem \ref{th:hypoellipticity in H^1}, i.e., we show that, for any initial data $h_0 \in  L^2(\mathbb{R}^{2d}, f_{\infty}),$ the solution  $h(t)$ of \eqref{Eqh}  is in $\mathscr{H}^1(\mathbb{R}^{2d}, f_{\infty})$ for all $t > 0.$ Then, we shall prove Corollary \ref{corollary}.

\begin{proof}[\textbf{Proof of Theorem \ref{th:hypoellipticity in H^1}}] Let $\displaystyle h\colonequals \frac{f-f_{\infty}}{f_{\infty}}.$ Then $h$ solves \eqref{Eqh}.
  We define the functional 
$$\mathcal{E}[h]\colonequals \gamma ||h||^2_{L^2(\mathbb{R}^{2d}, f_{\infty})}+\mathrm{S}_P[h], $$ where  $\gamma>0,$ and $\mathrm{S}_P[h]$ is defined in \eqref{S_P}. 
In order to prove the short-time regularization of \eqref{hypel11} and \eqref{hypoel21} we consider this functional with  a matrix $P$ which depends not only on $x$ and  $p$ but also on time $t,$ i.e. 
\begin{equation*}
P=P(t, x,p)\colonequals \begin{pmatrix}
\frac{2\varepsilon^3t^3}{V_0^3p_0^3}(I- \frac{p\otimes p}{p_0^2})& \frac{\varepsilon^2t^2}{V_0^2p_0^2}I\\
\frac{\varepsilon^2t^2}{V_0^2p_0^2} I& \frac{2\varepsilon t}{V_0 p_0} (I+p\otimes p)
\end{pmatrix},
\end{equation*}
 where $\varepsilon>0$ will be fixed later. Compared to \eqref{Pmat}, $\varepsilon$ was replaced by $\varepsilon t.$ It is easy to check 
 \begin{equation}\label{.<P<.1}\begin{pmatrix}
\frac{ \varepsilon^3 t^3}{V_0^3p_0^{3}} (I-\frac{p\otimes p}{p_0^2})&0\\0&   \frac{\varepsilon t}{V_0 p_0} (I+p\otimes p)
\end{pmatrix}\leq P,
\end{equation}which implies that $P(t,x, p)$ is positive definite for all $t>0$ and $x, \,p\in \mathbb{R}^d.$  Our goal is to show that $\mathcal{E}[h(t)]$  decreases. To this end we compute the time derivative of $\mathcal{E}[h(t)].$  We follow the proofs of Lemma \ref{lemma ||h||^2} and Lemma \ref{lemma main} to compute the time derivative of $\mathcal{E}[h(t)],$ but now we need to take into account that $P$ depends on time $t:$ 
\begin{multline}\label{dt Et}
\frac{d }{dt}\mathcal{E}[h(t)]= -2\gamma\int_{ \mathbb{R}^{2d}}\nabla^T_p hD \nabla_p hf_{\infty}dxdp\\
-2\int_{ \mathbb{R}^{2d}}\left\{\sum_{i, j=1}^d \begin{pmatrix}
\nabla_x ( \partial_{p_i} h)\\ \nabla_p(\partial_{p_i} h)
\end{pmatrix}^T  P\begin{pmatrix}
\nabla_x (\partial_{p_j} h)\\\nabla_p(\partial_{p_j} h)
\end{pmatrix} a_{ij}\right\}f_{\infty}dxdp\\+2\int_{ \mathbb{R}^{2d}}  \begin{pmatrix}
\nabla_x h\\ \nabla_p h
\end{pmatrix}^T P \begin{pmatrix} 0\\\sum_{i,j=1}^d \nabla_p a_{ij}\partial^2_{p_ip_j}h
\end{pmatrix} f_{\infty}dxdp\\
-\int_{ \mathbb{R}^{2d}}  \begin{pmatrix}
\nabla_x h\\ \nabla_p h
\end{pmatrix}^T  \left\{QP+PQ^T-\partial_t P\right\}\begin{pmatrix}
\nabla_x h\\ \nabla_p h
\end{pmatrix} f_{\infty}dxdp\\
+\int_{\mathbb{R}^{2d}}  \begin{pmatrix}
\nabla_x h\\ \nabla_p h
\end{pmatrix}^T  \left\{\sum_{i=1}^d\left(\frac{p_i}{p_0} \partial_{x_i} P -\partial_{x_i} V  \partial_{p_i} P\right)+\sum_{i,j=1}^d \frac{1}{f_{\infty}}\partial_{p_j}(\partial_{p_i}Pa_{ij}f_{\infty})\right\}\begin{pmatrix}
\nabla_x h\\ \nabla_p h
\end{pmatrix} f_{\infty}dxdp.
\end{multline}
We estimate the terms on the right as in \eqref{22}-\eqref{26} (where we need to replace $\varepsilon$ by $\varepsilon t$) and obtain
\begin{equation}\label{dt Et1}
\frac{d }{dt}\mathcal{E}[h(t)]\leq  
-\int_{ \mathbb{R}^{2d}}  \begin{pmatrix}
\nabla_x h\\ \nabla_p h
\end{pmatrix}^T  P_2\begin{pmatrix}
\nabla_x h\\ \nabla_p h
\end{pmatrix} f_{\infty}dxdp,
\end{equation}
where
\begin{multline}\label{P_2 mat.}
P_2 \colonequals 2\gamma\begin{pmatrix}
0& 0\\
0& D
\end{pmatrix}+(QP+PQ^T)-\partial_t P-\sum_{i=1}^d\left(\frac{p_i}{p_0} \partial_{x_i} P -\partial_{x_i} V  \partial_{p_i} P\right)-\sum_{i,j=1}^d \frac{1}{f_{\infty}}\partial_{p_j}(\partial_{p_i}Pa_{ij}f_{\infty})\\
-\begin{pmatrix}
\frac{\varepsilon^3t^3 d}{\eta V_0^3p_0^3}(I-\frac{p\otimes p}{p_0^2})& 0\\
0& \frac{2\varepsilon t d}{\eta V_0 p_0} (I+p\otimes p)
\end{pmatrix};
\end{multline}
note the similarity of $P_2$ with $P_1$ in \eqref{P_1 mat.}.
The first three terms can be rewritten as 
\begin{multline*}
2\gamma\begin{pmatrix}
0& 0\\
0& D
\end{pmatrix}+(QP+PQ^T)-\partial_t P\\
=\begin{pmatrix}
(1-\frac{3\varepsilon }{V_0})\frac{2\varepsilon^2t^2}{V_0^2p_0^3}(I- \frac{p\otimes p}{p_0^2})& -\frac{\varepsilon^2t^2}{V_0^2p_0^3}(I- \frac{p\otimes p}{p_0^2})(\frac{2\varepsilon t}{V_0} \frac{\partial^2 V}{\partial x^2}+d I)+(\frac{\varepsilon t-2\varepsilon}{V_0}+2)\frac{\varepsilon t}{V_0p_0^2}I\\
- \frac{\varepsilon^2 t^2}{V_0^2p_0^3}(\frac{2\varepsilon t}{V_0} \frac{\partial^2 V}{\partial x^2}+d I)(I- \frac{p\otimes p}{p_0^2})+(\frac{\varepsilon t-2\varepsilon}{V_0}+2)\frac{\varepsilon t}{V_0p_0^2}I& -\frac{2\varepsilon^2t^2}{V_0^2p_0^2}\frac{\partial^2 V}{\partial x^2}+(\frac{4\varepsilon t-2 \varepsilon}{V_0}+2\gamma)\frac{I+p\otimes p}{p_0}-\frac{4\varepsilon t d}{V_0p_0^2}I
\end{pmatrix},
\end{multline*}
Then, using (from Appendix) Lemma \ref{heavy comp} for the fifth term of \eqref{P_2 mat.} and Lemma \ref{heavy comp2} for the forth term of \eqref{P_2 mat.} (where we need to replace $\varepsilon$ by $\varepsilon t$), we obtain that there exist constants $\theta_1,\,\theta_2,\,\theta_3\,\theta_4>0$ such that
\begin{equation}\label{tP_1>}
P_2\geq \begin{pmatrix}
X& Y^T\\
Y& Z
\end{pmatrix},
\end{equation} 
where $$X\colonequals \left(1-\frac{3\varepsilon }{V_0}-\theta_3 \varepsilon t-\frac{\theta_1 \varepsilon t}{V_0}-\frac{\varepsilon t d}{2\eta V_0}\right)\frac{2\varepsilon^2 t^2}{V_0^2p_0^3}\left(I- \frac{p\otimes p}{p_0^2}\right),$$
 $$Y\colonequals - \frac{\varepsilon^2t^2}{V_0^2p_0^3}\left(\frac{2\varepsilon t}{V_0} \frac{\partial^2 V}{\partial x^2}+d I\right)\left(I- \frac{p\otimes p}{p_0^2}\right)+\left(\frac{\varepsilon t-2\varepsilon}{V_0}+2\right)\frac{\varepsilon t}{V_0p_0^2}I,$$
  $$Z\colonequals  -\frac{2\varepsilon^2t^2}{V_0^2p_0^2}\frac{\partial^2 V}{\partial x^2}+\left(\frac{(4-2\theta_2)\varepsilon t-2\varepsilon}{V_0}+2\gamma-2\theta_4 \varepsilon t -\frac{2\varepsilon t d}{\eta V_0 }\right)\frac{I+p\otimes p}{p_0}-\frac{4\varepsilon t d}{V_0p_0^2}I
.$$
We choose a sufficiently small $\varepsilon>0$ such that 
$$1-\frac{3\varepsilon }{V_0}-\theta_3 \varepsilon t-\frac{\theta_1 \varepsilon t}{V_0}-\frac{\varepsilon t d}{2\eta V_0}>\frac{1}{2}$$
for all $x\in \mathbb{R}^d,$ $t\in [0,t_0].$  It is possible to choose such $\varepsilon>0$ because $\frac{1}{V_0(x)}$ is bounded for  $x\in \mathbb{R}^d$ and $t$ varies in a bounded interval.
Then we have
\begin{equation}\label{tX>}
X\geq \frac{\varepsilon^2 t^2}{V_0^2p_0^3}\left(I- \frac{p\otimes p}{p_0^2}\right)\geq 0
\end{equation}
for all $ x, p \in \mathbb{R}^d,$ $t\in [0, t_0].$ 
Since the elements of the matrix $\frac{1}{V_0} \frac{\partial^2 V}{\partial x^2}$ are uniformly bounded by Assumption \eqref{cond.V} and $\frac{1}{p_0^3}\left(I- \frac{p\otimes p}{p_0^2}\right)\leq \frac{1}{p_0^2}I,$ if we (possibly) choose  $\varepsilon>0$ even smaller, then we have 
\begin{equation}\label{tY>} \frac{\varepsilon t}{V_0p_0^2}I\leq Y\leq  \frac{3 \varepsilon t}{V_0p_0^2}I
\end{equation}
for all $ x, p \in \mathbb{R}^d,$ $t\in [0, t_0].$ 
Similarly, since the elements of the matrix $\frac{1}{V_0} \frac{\partial^2 V}{\partial x^2}$ are bounded and $\frac{1}{p_0^2}\leq \frac{1}{p_0} (I+p\otimes p),$ if we (possible) choose $\varepsilon>0$ even smaller,  we have
\begin{equation}\label{tZ>}
Z\geq \frac{2\gamma-1}{p_0} (I+p\otimes p)\geq 0
\end{equation}
for all $ x, p \in \mathbb{R}^d,$ $t\in [0, t_0].$
\eqref{tX>}, \eqref{tY>}, and \eqref{tZ>} show that, if $ \varepsilon>0$ is small enough and $\gamma>0$ is large enough, then 
\begin{equation}\label{tXYZ}
\begin{pmatrix}
X& Y^T\\
Y& Z
\end{pmatrix}\geq 0.
\end{equation} 
We fix $ \varepsilon>0$ and $\gamma>0$  such that \eqref{tX>}-\eqref{tXYZ} hold.
Then \eqref{dt Et1},  \eqref{tP_1>}, and \eqref{tXYZ} imply
\begin{equation*}
\frac{d }{dt}\mathcal{E}[h(t)]\leq 0.
\end{equation*}
 Hence,  $\mathcal{E}[h(t)]$ is decreasing in $[0,t_0]$ and therefore
\begin{equation}\label{E<gamma ||||} \mathcal{E}[h(t)]\leq \mathcal{E}[h(0)]= \gamma||h_0||^2_{L^2(\mathbb{R}^{2d}, f_{\infty})}, \, \, \,  \, \, \forall \,t \in [0,t_0].
\end{equation}
Moreover, we have by \eqref{.<P<.1} that
\begin{multline}\label{E>||.||}\mathcal{E}[h(t)] \geq \varepsilon^3t^3 \int_{ \mathbb{R}^{2d}}\frac{1}{V_0^3p_0^3}\nabla^T_x h(t)\left(I-\frac{p\otimes p}{p_0^2}\right)\nabla_x h(t)f_{\infty}dxdp\\+\varepsilon t \int_{\mathbb{R}^{2d}}\frac{1}{V_0p_0}\nabla^T_p h(t)(I+p\otimes p)\nabla_p h(t)f_{\infty}dxdp.  
 \end{multline} \eqref{E<gamma ||||} and \eqref{E>||.||} show that  
 $$ \int_{\mathbb{R}^{2d}}\frac{1}{V^3_0p_0^3}\nabla^T_x h(t)\left(I-\frac{p\otimes p}{p_0^2}\right)\nabla_x h(t)f_{\infty}dxdp\leq \frac{\gamma}{\varepsilon^3t^3}||h_0||^2_{L^2(\mathbb{R}^{2d}, f_{\infty})}$$
 and $$\int_{\mathbb{R}^{2d}}\frac{1}{V_0p_0}\nabla^T_p h(t)(I+p\otimes p)\nabla_p h(t)f_{\infty}dxdp\leq \frac{\gamma}{\varepsilon t}||h_0||^2_{L^2(\mathbb{R}^{2d}, f_{\infty})}.$$
Hence, \eqref{hypel11} and \eqref{hypoel21} hold with constants $C_3\colonequals \frac{\gamma}{\varepsilon^3}$ and $C_4\colonequals \frac{\gamma}{\varepsilon}.$ \eqref{hypoel3}  follows easily by adding these estimates.  
\end{proof}
\begin{proof}[\textbf{Proof of Corollary \ref{corollary}}]
Let $t_0>0.$  Theorem \ref{th:hypoellipticity in H^1}  and Theorem \ref{decay in H^1} show that $\frac{f(t_0)}{f_{\infty}}\in \mathscr{H}^1(\mathbb{R}^{2d}, f_{\infty})$ and 
\begin{equation*}
\left|\left|\frac{f(t)-f_{\infty}}{f_{\infty}}\right|\right|_{\mathscr{H}^1(\mathbb{R}^{2d}, f_{\infty})}\leq C_2 e^{-\Lambda (t-t_0)} \left|\left|\frac{f(t_0)-f_{\infty}}{f_{\infty}}\right|\right|_{\mathscr{H}^1(\mathbb{R}^{2d}, f_{\infty})}
\end{equation*}
holds for all $t\geq t_0>0$  with the constant $C_2$ and the rate $\Lambda$ given in Theorem \ref{decay in H^1}. Using \eqref{hypoel3} at $t=t_0,$ we get 
\begin{equation*}
\left|\left|\frac{f(t)-f_{\infty}}{f_{\infty}}\right|\right|_{\mathscr{H}^1(\mathbb{R}^{2d}, f_{\infty})}\leq   \frac{C_2(C_3+C_4 t_0^2)^{1/2} e^{\Lambda t_0}}{t_0^{3/2}} e^{-\Lambda t}  \left|\left|\frac{f_0-f_{\infty}}{f_{\infty}}\right|\right|_{L^2(\mathbb{R}^{2d}, f_{\infty})}.
\end{equation*}
This proves the claimed estimate with the constant $C_5 \colonequals  \frac{C_2(C_3+C_4 t_0^2)^{1/2} e^{\Lambda t_0}}{t_0^{3/2}}.$
\end{proof}

\section{Newtonian limit $c\to\infty$}\label{sec:limit}
%
So far we analyzed the relativistic Fokker-Planck equation \eqref{Eq0} only for the fixed constant $c=1$, showing that its solutions converge with an explicit exponential rate toward equilibrium $f_\infty$, see Theorem \ref{main result}. Since solutions of the classical kinetic Fokker-Planck equation \eqref{KFP} also converge exponentially to its equilibrium, it is natural to ask if the $L^2$-decay estimate \eqref{L2-decay} holds uniformly as $c\to\infty$. As this problem has not been solved yet, we shall give here some comments for the case of the space homogeneous equation \eqref{HFP} with
\begin{equation*}\label{matrixDc}
  D(p)\colonequals\frac{I+\frac{p\otimes p}{c^2}}{\sqrt{1+\frac{|p|^2}{c^2}}}\in \mathbb{R}^{d\times d},    
\end{equation*}
and $c>0$, but keeping $m=q=\sigma=\nu=1$. This amounts to proving uniformity in the decay estimate \eqref{p-decay} as $c\to\infty$. More precisely, we discuss here only how the decay rate $\kappa_1$ in \eqref{p-decay} (which also appears in the Poincar\'e inequality \eqref{Poincare M}) depends on $c.$

In \cite{Felix.Cal.}, the Bakry-Emery method \cite{Con.Sob, BGL} has been used to prove a log-Sobolev inequality for the probability density 
\begin{equation*}\label{Mc}
  M_c(p)\colonequals \frac{e^{-c\sqrt{c^2+|p|^2}}}{ \int_{\mathbb{R}^{d}}e^{-c\sqrt{c^2+|p'|^2}}dp'},
\end{equation*} 
with an explicit (but in general not sharp) Sobolev constant. 
Since the Bakry-Emery method also applies to weighted $L^2$-norms \cite{Con.Sob}, the same constant can be taken for the Poincar\'e inequality for $M_c(p)$.  
Theorem 2 in \cite{Felix.Cal.} yields the following $c$-dependent constant in the  log-Sobolev inequality:
$$
  \frac{\kappa_1}{2}= P_c\big(\frac{2 c^2+ c\sqrt{4 c^4-39}}{13}\big), \quad \mbox{for } c>2,
$$
where $P_c(u)=\frac{2 cu^3-13u^2+2 c^3u-c^2}{4cu^3}.$ We have $\lim_{c\to2} \frac{\kappa_1}{2}\approx 0.1812$. Moreover, one can check that $\kappa_1(c)$ increases monotonically and it converges to 1 as $c\to \infty.$ This is commensurate with the sharp decay rate $\kappa_1=1$  of the classical homogeneous Fokker-Planck equation (i.e. \eqref{HFP} with $D\equiv I$) in $L^2(\R^d,M_\infty)$ with the Gaussian equilibrium
$$
  M_\infty(p)\colonequals \frac{e^{-|p|^2/2}}{ \int_{\mathbb{R}^{d}}e^{-|p'|^2/2}dp'},
$$
see \cite{Con.Sob}. 
%

\section{Conclusion and Outlook}\label{sec:conclusion}
In this paper we established exponential convergence of solutions for the relativistic Fokker-Planck equation to the unique steady state, and we included cases with non-convex external potentials (e.g. double-well potentials). Our main results are:
\begin{enumerate}
\item[(a)] Exponential convergence in a weighted $L^2$ setting, by following the $L^2-$hypocoercivity method of Dolbeault-Mouhot-Schmeiser \cite{dolbeault2015hypocoercivity}.
\item[(b)] Exponential convergence in a weighted $H^1$ setting, by establishing a new, refined entropy method that is based on a modified Fisher-type functional with a metric that depends on position and momentum.
\item[(c)] Hypoelliptic regularization from $L^2$ to $H^1$ by making the Lyapunov functional from (b) depend also on time.
\end{enumerate}
In the proof of (a), we used the $L^2-$hypocoercivity method of Dolbeault-Mouhot-Schmeiser \cite{dolbeault2015hypocoercivity}. This method was applied to the classical kinetic Fokker-Planck equation in \cite{dolbeault2015hypocoercivity}. We showed that this method can be applied to the relativistic Fokker-Planck equation which, in some sense, is more general than the classical one. As an extension of the present work one could study the long-time behavior of general degenerate non-symmetric Fokker-Planck equations with non-constant diffusion matrices. Such degenerate extensions of the kinetic Fokker-Planck equation (but with constant diffusion matrices and linear drift) have been considered in \cite{FP1, FP2, FP3}.

In general, relativistic kinetic equations are mathematically more challenging than classical ones since the relativistic transport and collision terms are more intricate, thus complicating their mathematical structure. Our results show that the usual hypocoercive methods developed for the classical Fokker-Planck equation can be adapted to the relativistic case. 
We expect that the techniques used in this paper can be useful to study the long-time behavior and hypoelliptic regularization of other relativistic kinetic equations, as the (linearized) relativistic Landau equation, e.g.

\section{Appendix}
\subsection{Weighted Poincar\'e inequalities and an elliptic regularity result}\label{6.1}


In this section we consider the elliptic equation 
\begin{equation}\label{ellip.eq.gen}
u(x) -\frac{a}{\rho_{\infty}(x)}\text{div}_x( \nabla_x  u(x) \rho_{\infty}(x) )=w(x), \, \, \, \, \, \, \, x \in \mathbb{R}^d,
\end{equation}
where $u$ is unknown, $a$ is a positive constant, and $w$ is a given function.  We will establish some regularity estimates for this equation in $L^2(\mathbb{R}^d, \rho_{\infty})$ with $\rho_{\infty}>0$ from \eqref{star1}. 
\begin{theorem}\label{lem:ellip} Let $w \in L^2(\mathbb{R}^d, \rho_{\infty})$ and $\int_{\mathbb{R}^d} w \rho_{\infty}dx=0.$ Assume that the potential $V$ satifies Assumption \ref{Assumptions i-ii}. 
Then, there are positive constants $C_1$ and $C_2$ such that the unique solution of \eqref{ellip.eq.gen} satisfies
\begin{equation}\label{C_1}
\int_{\mathbb{R}^d}|\nabla_x u|^2 |\nabla_x V|^2 \rho_{\infty}dx\leq C_1\int_{\mathbb{R}^d} w^2 \rho_{\infty}dx,
\end{equation}
\begin{equation}\label{C_2}
\int_{\mathbb{R}^d}\left| \left|\frac{\partial^2 u}{\partial x^2} \right|\right|_F^2 \rho_{\infty}dx \leq C_2\int_{\mathbb{R}^d} w^2 \rho_{\infty}dx.
\end{equation}
\end{theorem}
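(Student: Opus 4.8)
The plan is to obtain \eqref{C_1} and \eqref{C_2} from three ingredients: the weak formulation of \eqref{ellip.eq.gen}; the Bochner--Bakry--Émery identity for the weighted Laplacian $L_\rho u:=-\tfrac{1}{\rho_\infty}\mathrm{div}_x(\nabla_x u\,\rho_\infty)=-\Delta_x u+\nabla_x V\cdot\nabla_x u$; and a weighted Hardy--Poincaré estimate that converts the confinement bound $\Delta_x V\le c_1+\tfrac{c_2}{2}|\nabla_x V|^2$ (with $c_2\in[0,1)$) into an $L^2$-control of $|\nabla_x V|^2$ by a gradient. Throughout write $d\mu:=\rho_\infty\,dx$ and $W:=\int_{\mathbb{R}^d}w^2\,d\mu$. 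First I would record the elementary facts: \eqref{ellip.eq.gen} reads $(\mathrm{I}+aL_\rho)u=w$, the bilinear form $(u,\phi)\mapsto\int u\phi\,d\mu+a\int\nabla_x u\cdot\nabla_x\phi\,d\mu$ is coercive on $H^1(\mu)$ (uniformly, because of the zeroth-order term), so Lax--Milgram gives the unique solution $u\in H^1(\mu)$; testing the equation against $u$ yields $\int u^2\,d\mu\le W$ and $E:=\int|\nabla_x u|^2\,d\mu\le\tfrac{1}{2a}W$, and integrating the equation gives $\int u\,d\mu=0$. Since $V\in C^2$, rewriting $\Delta_x u=\tfrac1a(u-w)+\nabla_x V\cdot\nabla_x u\in L^2_{\mathrm{loc}}$ and invoking interior elliptic regularity gives $u\in H^2_{\mathrm{loc}}$, so $\tfrac{\partial^2 u}{\partial x^2}\in L^2_{\mathrm{loc}}$.

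Next I would test $(\mathrm{I}+aL_\rho)u=w$ against $L_\rho u$, which formally gives $\int|\nabla_x u|^2\,d\mu+a\int(L_\rho u)^2\,d\mu=\int w\,L_\rho u\,d\mu$, hence $\int(L_\rho u)^2\,d\mu\le\tfrac{1}{a^2}W$. Setting $A:=\int\|\tfrac{\partial^2 u}{\partial x^2}\|_F^2\,d\mu$ and $B:=\int|\nabla_x u|^2\,|\nabla_x V|^2\,d\mu$, the Bochner identity for $L_\rho$ reads
\[
\int(L_\rho u)^2\,d\mu=\int\Big\|\frac{\partial^2 u}{\partial x^2}\Big\|_F^2\,d\mu+\int\nabla_x^T u\,\frac{\partial^2 V}{\partial x^2}\,\nabla_x u\,d\mu ;
\]
it follows by integrating against $d\mu$ the pointwise identity $\tfrac12\mathcal L(|\nabla_x u|^2)=\|\tfrac{\partial^2 u}{\partial x^2}\|_F^2+\nabla_x u\cdot\nabla_x(\mathcal L u)+\nabla_x^T u\,\tfrac{\partial^2 V}{\partial x^2}\,\nabla_x u$, where $\mathcal L:=-L_\rho$, together with $\int\mathcal L\phi\,d\mu=0$ and $\int\nabla_x u\cdot\nabla_x(\mathcal L u)\,d\mu=-\int(\mathcal L u)^2\,d\mu$ (and $(\mathcal L u)^2=(L_\rho u)^2$). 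Combining the Bochner identity with $\|\tfrac{\partial^2 V}{\partial x^2}\|_F\le c_3(1+|\nabla_x V|)$ and Cauchy--Schwarz yields
\[
A\le\frac{1}{a^2}W+c_3\int(1+|\nabla_x V|)|\nabla_x u|^2\,d\mu\le\frac{1}{a^2}W+c_3E+c_3\sqrt{B\,E}.
\]

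For the complementary bound, from $\mathrm{div}_x(\nabla_x V\,\rho_\infty)=(\Delta_x V-|\nabla_x V|^2)\rho_\infty$ one obtains, for any scalar $g\in H^1(\mu)$,
\[
\int g^2|\nabla_x V|^2\,d\mu=\int g^2\,\Delta_x V\,d\mu+2\int g\,\nabla_x g\cdot\nabla_x V\,d\mu ,
\]
and inserting $\Delta_x V\le c_1+\tfrac{c_2}{2}|\nabla_x V|^2$ and using Young's inequality with a weight $\delta=\tfrac{2-c_2}{4}>0$ to absorb the two $|\nabla_x V|^2$-terms leaves $\int g^2|\nabla_x V|^2\,d\mu\le C_0(c_1,c_2)\big(\int g^2\,d\mu+\int|\nabla_x g|^2\,d\mu\big)$; taking $g=\partial_{x_k}u$ and summing over $k$ gives $B\le C_0(E+A)$. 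Substituting this into the previous display, bounding $\sqrt{B\,E}\le\sqrt{C_0E}\,(\sqrt E+\sqrt A)$ and absorbing the $\sqrt A$-term by Young's inequality, one gets $\tfrac34 A\le\tfrac1{a^2}W+c(a,c_1,c_2,c_3)E\le C_2W$ (using $E\le\tfrac1{2a}W$), which is \eqref{C_2}; then $B\le C_0(E+A)\le C_1W$ gives \eqref{C_1}, with $C_1,C_2$ explicit in $a,c_1,c_2,c_3$.

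The algebra above is routine; the actual difficulty is to justify the two integrations by parts on the whole of $\mathbb{R}^d$ — the one with test function $L_\rho u$, and the weighted Bochner identity, which involves third derivatives of $u$. I would handle this by the standard two-step regularization: first a mollification (or an approximation of $u$ by the solutions of the same equation with $w$ replaced by smooth truncations) so that the Bochner manipulation is legitimate, and then carrying out every integration by parts against a spatial cutoff $\chi_R(x)$, estimating the commutator terms (supported in $\{|x|\sim R\}$) by the already-established finiteness of $E$ and $\int(L_\rho u)^2\,d\mu$, and finally letting $R\to\infty$ — the confinement hypothesis on $V$ ensures $\rho_\infty$ decays fast enough for these limits. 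It is this truncation/approximation bookkeeping, rather than any individual estimate, that I expect to be the main obstacle.
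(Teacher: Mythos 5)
Your proof is correct, and it takes a genuinely different route from the paper. The paper proves \eqref{C_1} first, by multiplying \eqref{ellip.eq.gen} by $u|\nabla_x V|^2\rho_\infty$ and estimating the resulting terms with the weighted Poincar\'e inequalities of Lemma \ref{lemma. weight.poin} (which in turn rely on the basic Poincar\'e inequality \eqref{Poincare rho}); it then proves \eqref{C_2} from \eqref{C_1} via a double integration by parts that produces $\int \nabla_x^T u\,\frac{\partial^2 u}{\partial x^2}\,\nabla_x V\,\rho_\infty dx$ and is closed with Cauchy--Schwarz. You instead bound the Hessian first: writing $A\colonequals\int\|\frac{\partial^2 u}{\partial x^2}\|_F^2 d\mu$ and $B\colonequals\int|\nabla_x u|^2|\nabla_x V|^2 d\mu$, you use the Bakry--\'Emery/Bochner commutator identity $\int(L_\rho u)^2 d\mu = A+\int\nabla_x^T u\,\frac{\partial^2 V}{\partial x^2}\nabla_x u\,d\mu$ together with $\|\frac{\partial^2 V}{\partial x^2}\|_F\le c_3(1+|\nabla_x V|)$ to get $A\le a^{-2}W+c_3E+c_3\sqrt{BE}$, and the elementary weighted Hardy-type estimate $\int g^2|\nabla_x V|^2 d\mu\le C_0(\int g^2 d\mu+\int|\nabla_x g|^2 d\mu)$ (which needs only $\Delta_x V\le c_1+\frac{c_2}{2}|\nabla_x V|^2$, $c_2<1$) applied to $g=\partial_{x_k}u$ to get $B\le C_0(E+A)$; the closed system in $A,B$ is then solved by Young. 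What your route buys: it dispenses with the basic Poincar\'e inequality \eqref{Poincare rho} entirely and hence with Lemma \ref{lemma. weight.poin}, using only the growth conditions \eqref{cond.V}; the constants are again explicit in $a,c_1,c_2,c_3$. What it costs: one must apply the Hardy-type inequality to $g=\partial_{x_k}u$, which requires $A<\infty$ a priori, so the approximation/truncation step you flag at the end is not optional --- it is needed precisely to break this circularity (the paper's multiplier argument has an analogous issue and is likewise silent about it). One minor simplification you could have used: since the equation gives $L_\rho u=(w-u)/a\in L^2(\mu)$ directly, the bound $\int(L_\rho u)^2 d\mu\le 4a^{-2}W$ is immediate without the extra testing against $L_\rho u$, though your test gives the sharper $a^{-2}W$.
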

  To prove Theorem \ref{lem:ellip}, we  need  the weighted Poincar\'e inequalities   \eqref{weighted poin.1} and \eqref{weighted poin.2} below. We mention that these inequalities were obtained in \cite{dolbeault2015hypocoercivity} in a  general setting, but we provide proofs  for being self-contained.
\begin{lemma}\label{lemma. weight.poin}
 Assume that Assumption \ref{Assumptions i-ii}  holds.
 Then
 \begin{itemize}
 \item[i)]
There exists $\kappa_3>0$ such that 
\begin{equation}\label{weighted poin.1}
\int_{\mathbb{R}^d}h^2|\nabla_x V|^2\rho_{\infty}dx\leq \frac{1}{\kappa_3}\int_{\mathbb{R}^d}|\nabla_xh|^2 \rho_{\infty}dx
\end{equation}
holds for all $ h\in L^2(\mathbb{R}^d, \rho_{\infty})$ with $ |\nabla_x h|\in L^2(\mathbb{R}^d, \rho_{\infty})$ and  $\int_{\mathbb{R}^d} h \rho_{\infty}dx=0.$
\item[ii)] There exists $\kappa_4>0$ such that 
\begin{equation}\label{weighted poin.2}
\int_{\mathbb{R}^d}h^2(1+|\nabla_x V|^2)|\nabla_x V|^2\rho_{\infty}dx\leq \frac{1}{\kappa_4}\int_{\mathbb{R}^d}|\nabla_x h|^2 (1+|\nabla_x V|^2) \rho_{\infty}dx
\end{equation}
holds for all $ h\in L^2(\mathbb{R}^d, \rho_{\infty})$ with $ |\nabla_x h|(1+|\nabla_x V|)\in L^2(\mathbb{R}^d, \rho_{\infty})$ and $\int_{\mathbb{R}^d} h \rho_{\infty}dx=0.$
\end{itemize}  
\end{lemma}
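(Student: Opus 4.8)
The plan is to establish both weighted Poincaré inequalities by the classical integration-by-parts device, exploiting the identity $\nabla_x\rho_\infty=-\nabla_x V\,\rho_\infty$ together with the Laplacian and Hessian bounds in \eqref{cond.V} and the ordinary Poincaré inequality \eqref{Poincare rho}. In both parts it suffices to argue for $h\in C_c^\infty(\mathbb{R}^d)$ with $\int_{\mathbb{R}^d}h\rho_\infty\,dx=0$, since the constants produced below are independent of the support, and the general case follows by truncation and Fatou's lemma (if the right-hand side is infinite there is nothing to prove).

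\textbf{Part i).} Writing $|\nabla_x V|^2\rho_\infty=-\nabla_x V\cdot\nabla_x\rho_\infty$ and integrating by parts gives
\begin{equation*}
\int_{\mathbb{R}^d}h^2|\nabla_x V|^2\rho_\infty\,dx=\int_{\mathbb{R}^d}\Big(2h\,\nabla_x h\cdot\nabla_x V+h^2\,\Delta_x V\Big)\rho_\infty\,dx .
\end{equation*}
I would then insert $\Delta_x V\le c_1+\tfrac{c_2}{2}|\nabla_x V|^2$ from \eqref{cond.V} and use Young's inequality $2|h|\,|\nabla_x h|\,|\nabla_x V|\le\varepsilon\,h^2|\nabla_x V|^2+\varepsilon^{-1}|\nabla_x h|^2$, obtaining
\begin{equation*}
\Big(1-\tfrac{c_2}{2}-\varepsilon\Big)\int_{\mathbb{R}^d}h^2|\nabla_x V|^2\rho_\infty\,dx\le\frac1\varepsilon\int_{\mathbb{R}^d}|\nabla_x h|^2\rho_\infty\,dx+c_1\int_{\mathbb{R}^d}h^2\rho_\infty\,dx .
\end{equation*}
Since $c_2<1$, I fix $\varepsilon>0$ so small that $1-\tfrac{c_2}{2}-\varepsilon>0$, and finally bound $\int h^2\rho_\infty\,dx$ by $\kappa_2^{-1}\int|\nabla_x h|^2\rho_\infty\,dx$ via \eqref{Poincare rho}; this gives \eqref{weighted poin.1} with an explicit $\kappa_3$.

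\textbf{Part ii).} Here I would repeat the computation but integrate by parts the weighted field $(1+|\nabla_x V|^2)\nabla_x V$, using again $\nabla_x\rho_\infty=-\nabla_x V\rho_\infty$ and $\nabla_x(|\nabla_x V|^2)=2\tfrac{\partial^2 V}{\partial x^2}\nabla_x V$:
\begin{equation*}
\int_{\mathbb{R}^d}h^2(1+|\nabla_x V|^2)|\nabla_x V|^2\rho_\infty\,dx=\int_{\mathbb{R}^d}\Big(2h\,\nabla_x h\cdot\nabla_x V\,(1+|\nabla_x V|^2)+2h^2\,\nabla_x V^{T}\tfrac{\partial^2 V}{\partial x^2}\nabla_x V+h^2(1+|\nabla_x V|^2)\Delta_x V\Big)\rho_\infty\,dx .
\end{equation*}
To the first term I apply Young's inequality with weight $1+|\nabla_x V|^2$; to the middle term I use $|\nabla_x V^{T}A\nabla_x V|\le\|A\|_F|\nabla_x V|^2$, the Hessian bound $\|\partial^2 V/\partial x^2\|_F\le c_3(1+|\nabla_x V|)$, and a weighted Young inequality $|\nabla_x V|^3\le\tfrac1{2\delta}|\nabla_x V|^2+\tfrac{\delta}{2}|\nabla_x V|^4$; to the last term I insert \eqref{cond.V}. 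Splitting the left side as $\int h^2|\nabla_x V|^2\rho_\infty+\int h^2|\nabla_x V|^4\rho_\infty$, the total coefficient multiplying $\int h^2|\nabla_x V|^4\rho_\infty\,dx$ on the right is $\tfrac{c_2}{2}+\varepsilon+c_3\delta$, which is strictly less than $1$ once $\varepsilon,\delta$ are chosen small (this is where $c_2<1$ re-enters), so that term is absorbed into the left. Every remaining error term has the form $\int h^2|\nabla_x V|^2\rho_\infty\,dx$ — which, regardless of how large the accompanying constant is, is controlled by part i) and hence by $\int|\nabla_x h|^2(1+|\nabla_x V|^2)\rho_\infty\,dx$ — or $\int h^2\rho_\infty\,dx$, controlled by \eqref{Poincare rho}; together with the good term $\varepsilon^{-1}\int|\nabla_x h|^2(1+|\nabla_x V|^2)\rho_\infty\,dx$ this yields \eqref{weighted poin.2}.

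\textbf{Main obstacle.} The computations are elementary; the two delicate points are rigorously justifying the integrations by parts, i.e.\ the vanishing of boundary terms for $h$ in the stated weighted spaces — which I would handle by first treating $h\in C_c^\infty$, or by truncating $|\nabla_x V|^2$ at height $R$ and passing to the limit with constants independent of $R$ — and the bookkeeping in part ii) that keeps the coefficient $\tfrac{c_2}{2}+\varepsilon+c_3\delta$ of the top-order term $\int h^2|\nabla_x V|^4\rho_\infty\,dx$ strictly below $1$; the latter is exactly what forces the use of the hypothesis $c_2\in[0,1)$ a second time.
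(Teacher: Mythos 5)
Your proof is correct, and for part~(ii) it takes a genuinely different route from the paper. For part~(i) the two arguments are essentially the same computation in different bookkeeping: the paper expands $|\nabla_x h|^2\rho_\infty$ via the identity $\sqrt{\rho_\infty}\,\nabla_x h=\nabla_x(h\sqrt{\rho_\infty})+\tfrac{h\sqrt{\rho_\infty}}{2}\nabla_x V$, drops the nonnegative $|\nabla_x(h\sqrt{\rho_\infty})|^2$ and integrates by parts, whereas you integrate by parts directly on $\int h^2|\nabla_x V|^2\rho_\infty\,dx$ and use Young to split; both land on $\Delta_x V$, invoke \eqref{cond.V} and then \eqref{Poincare rho}. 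For part~(ii), however, the paper avoids any new integration by parts: it defines $\bar h:=\int hV_0\,\rho_\infty\,dx$ so that $hV_0-\bar h$ has mean zero, applies the already-proven part~(i) to $hV_0-\bar h$, and then controls $\int|\nabla_x(hV_0)|^2\rho_\infty\,dx$ via the Hessian bound in \eqref{cond.V} together with part~(i), plus a separate Cauchy--Schwarz/Poincar\'e bound on $\bar h$ (which also needs $\|V_0\|_{L^2(\rho_\infty)}<\infty$, extracted from the first condition in \eqref{cond.V}). You instead redo the integration by parts against the vector field $(1+|\nabla_x V|^2)\nabla_x V$, which forces you to track the top-order coefficient $\tfrac{c_2}{2}+\varepsilon+c_3\delta<1$ explicitly and then reabsorb the $\int h^2|\nabla_x V|^4\rho_\infty\,dx$ term, while all lower-order remainders ($\int h^2|\nabla_x V|^2\rho_\infty\,dx$ and $\int h^2\rho_\infty\,dx$) are controlled by part~(i) and \eqref{Poincare rho}, exactly as in the paper. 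What the paper's substitution trick buys is a shorter and somewhat cleaner argument that localizes the use of \eqref{cond.V} inside the estimate \eqref{grad1}; what your direct integration by parts buys is transparency about how the constants enter, and it makes the second use of the hypothesis $c_2\in[0,1)$ explicit rather than implicit. One small point to make fully rigorous in your version: when you split the left side and absorb, you should first drop the surviving $\int h^2|\nabla_x V|^2\rho_\infty\,dx$ from the left (since the matching coefficient on the right is larger than $1$), bound $\int h^2|\nabla_x V|^4\rho_\infty\,dx$, and only then add back the bound on $\int h^2|\nabla_x V|^2\rho_\infty\,dx$ from part~(i) to obtain \eqref{weighted poin.2}; as written, ``absorbed into the left'' plus ``remaining error terms'' is correct in substance but slightly compressed. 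The remark about first treating $h\in C_c^\infty$ and passing to the limit is appropriate and matches the level of rigor the paper itself adopts.
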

\begin{proof} $i)$
By the identity $\sqrt{\rho_{\infty} }\,\nabla_x h=\nabla_x (h\sqrt{\rho_{\infty}} )+\frac{h\sqrt{\rho_{\infty}}}{2}\nabla_x V$ and integrating by parts
\begin{align*}\int_{\mathbb{R}^d}|\nabla_x h|^2 \rho_{\infty}dx &\geq \frac{1}{4}\int_{\mathbb{R}^d}h^2|\nabla_x V|^2\rho_{\infty}dx+\int_{\mathbb{R}^d}h\sqrt{\rho_{\infty}} \,\nabla_x (h\sqrt{\rho_{\infty} })\cdot\nabla_x Vdx\\&= \frac{1}{4}\int_{\mathbb{R}^d}h^2|\nabla_x V
|^2\rho_{\infty}dx-\frac{1}{2}\int_{\mathbb{R}^d}h^2 \Delta_x V \rho_{\infty}dx.
\end{align*} 
This estimate and the first condition in \eqref{cond.V} show 
\begin{equation*}
\int_{\mathbb{R}^d}|\nabla_x h|^2 \rho_{\infty}dx\geq \frac{1-c_2}{4}\int_{\mathbb{R}^d}h^2|\nabla_x V
|^2\rho_{\infty}dx-\frac{c_1}{2}\int_{\mathbb{R}^d}h^2\rho_{\infty} dx.
\end{equation*}
Then, \eqref{Poincare rho} lets us  obtain \eqref{weighted poin.1} with the constant $\kappa_3\colonequals \frac{(1-c_2)(c_1+2\kappa_2)}{8\kappa_2}.$ \\

$ii)$ We recall  $V_0(x) \colonequals \sqrt{1+|\nabla_x V(x)|^2}.$ Let $\bar{h}\colonequals\int_{\mathbb{R}^d}h V_0\rho_{\infty}dx,$ then by \eqref{weighted poin.1}
 \begin{equation*}
\int_{\mathbb{R}^d}(hV_0-\bar{h})^2|\nabla_x V|^2\rho_{\infty}dx\leq \frac{1}{\kappa_3}\int_{\mathbb{R}^d}\left|\nabla_x(hV_0)\right|^2 \rho_{\infty}dx.
\end{equation*} 
This leads to
 \begin{equation}\label{grad0}
\int_{\mathbb{R}^d}h^2V_0^2|\nabla_x V|^2\rho_{\infty}dx\leq  \frac{1}{\kappa_3}\int_{\mathbb{R}^d}\left|\nabla_x(hV_0)\right|^2 \rho_{\infty}dx+2\bar{h}\int_{\mathbb{R}^d}hV_0|\nabla_x V|^2\rho_{\infty}dx.
\end{equation} 
Next, we estimate the terms on the right hand side of \eqref{grad0}:
\begin{align}\label{grad1}
\int_{\mathbb{R}^d}\left|\nabla_x(hV_0)\right|^2 \rho_{\infty}dx&=\int_{\mathbb{R}^d}\left|\nabla_xhV_0+{h}\frac{\partial^2 V}{\partial x^2}\frac{\nabla_x V}{V_0}\right|^2 \rho_{\infty}dx\nonumber \\ & \leq 
2\int_{\mathbb{R}^d}|\nabla_x h|^2V_0^2 \rho_{\infty}dx+2\int_{\mathbb{R}^d} h^2\left|\left|\frac{\partial^2 V}{\partial x^2}\right|\right|_F^2\frac{|\nabla_x V|^2}{V_0^2} \rho_{\infty}dx\nonumber \\
 & \leq 
2\int_{\mathbb{R}^d}|\nabla_x h|^2V_0^2 \rho_{\infty}dx+4c_3^2\int_{\mathbb{R}^d} h^2|\nabla_x V|^2 \rho_{\infty}dx\nonumber \\ & \leq  
2\int_{\mathbb{R}^d}|\nabla_x h|^2V_0^2 \rho_{\infty}dx+\frac{4c_3^2}{\kappa_3}\int_{\mathbb{R}^d} |\nabla_x h|^2\rho_{\infty}dx, 
\end{align} 
where we used the second condition in \eqref{cond.V} and \eqref{weighted poin.1}. By the H\"older inequality and \eqref{Poincare rho} 
  $$|\bar{h}|\leq ||V_0||_{L^2(\mathbb{R}^d, \rho_{\infty})}||h||_{L^2(\mathbb{R}^d, \rho_{\infty})}  \leq \frac{1}{\sqrt{\kappa_2}} ||V_0||_{L^2(\mathbb{R}^d, \rho_{\infty})}  ||\nabla_x h||_{L^2(\mathbb{R}^d, \rho_{\infty})}. $$  We note that $||V_0||_{L^2(\mathbb{R}^d, \rho_{\infty})}$ is finite, because  \eqref{star1} and the first condition in \eqref{cond.V} yields:  
$$\int_{\mathbb{R}^d}|\nabla_x V|^2\rho_{\infty}dx=\int_{\mathbb{R}^d} \Delta_x V \rho_{\infty}dx\leq c_1+\frac{c_2}{2}\int_{\mathbb{R}^d}|\nabla_x V|^2\rho_{\infty}dx,$$
hence  $\int_{\mathbb{R}^d}|\nabla_x V|^2\rho_{\infty}dx\leq \frac{2c_1}{2-c_2}.$
Then, the H\"older inequality shows
 \begin{equation}\label{grad2}
2\bar{h}\int_{\mathbb{R}^d}hV_0|\nabla_x V|^2\rho_{\infty}dx\leq \frac{2  ||V_0||^4_{L^2(\mathbb{R}^d, \rho_{\infty})}}{\kappa_2}\int_{\mathbb{R}^d}|\nabla_x h|^2\rho_{\infty}dx+ 
\frac{1}{2}{\int_{\mathbb{R}^d}h^2V_0^2|\nabla_x V|^2\rho_{\infty}dx}.
\end{equation}
\eqref{grad0}, \eqref{grad1}, and \eqref{grad2} yield 
\begin{equation*}
\int_{\mathbb{R}^d}h^2V_0^2|\nabla_x V|^2\rho_{\infty}dx\leq \int_{\mathbb{R}^d}(4\kappa_2^{-1}  ||V_0||^4_{L^2(\mathbb{R}^d, \rho_{\infty})}+8c_3^2\kappa_3^{-1}+4V_0^2)|\nabla_x h|^2\rho_{\infty}dx.
\end{equation*}
Therefore, we obtain \eqref{weighted poin.2} with $\kappa_4^{-1} \colonequals 4\kappa_2^{-1}  ||V_0||^4_{L^2(\mathbb{R}^d, \rho_{\infty})}+8c_3^2\kappa_3^{-1}+4.$

\end{proof}

\begin{proof}[\textbf{Proof of Theorem \ref{lem:ellip}}] Multiplying \eqref{ellip.eq.gen} by $\rho_{\infty}$ and integrating by parts we obtain $$\int_{\mathbb{R}^d} u \rho_{\infty}dx=\int_{\mathbb{R}^d} w \rho_{\infty}dx=0.$$
Next we multiply \eqref{ellip.eq.gen} by $u \rho_{\infty},$  integrate by parts, and use the H\"older inequality:
\begin{equation*}
\int_{\mathbb{R}^d}u^2\rho_{\infty}dx+a\int_{\mathbb{R}^d}|  \nabla_x u|^2\rho_{\infty}dx=\int_{\mathbb{R}^d} uw\rho_{\infty}dx\leq \frac{1}{2} \int_{\mathbb{R}^d}w^2\rho_{\infty}dx+\frac{1}{2} \int_{\mathbb{R}^d}u^2\rho_{\infty}dx.
\end{equation*}
Then
\begin{equation}\label{grad varphi}
\int_{\mathbb{R}^d}u^2 \rho_{\infty}dx+2a\int_{\mathbb{R}^d}|\nabla_x u|^2\rho_{\infty}dx
\leq \int_{\mathbb{R}^d}w^2\rho_{\infty}dx.
\end{equation}
We start   proving \eqref{C_1}: We multiply \eqref{ellip.eq.gen} by $u|\nabla_x V|^2 \rho_{\infty}$ and integrate by parts
\begin{align}\label{varphi V}
\int_{\mathbb{R}^d}u^2|\nabla_xV|^2 \rho_{\infty} dx& +a\int_{\mathbb{R}^d} | \nabla_x u|^2| \nabla_x V|^2 \rho_{\infty} dx \nonumber \\
&=\int_{\mathbb{R}^d} w u|\nabla_x V|^2 \rho_{\infty} dx- a\int_{\mathbb{R}^d} u \nabla_x u \cdot  \nabla_x (| \nabla_x V|^2) \rho_{\infty} dx.
\end{align}
Using the H\"older inequality we estimate the  terms on the right hand side of \eqref{varphi V} 
\begin{equation}\label{varphi V 1}
\int_{\mathbb{R}^d} w u|\nabla_x V|^2 \rho_{\infty} dx \leq \frac{1}{2\delta} 
 \int_{\mathbb{R}^d} w^2  \rho_{\infty} dx+\frac{\delta}{2}{\int_{\mathbb{R}^d}u^2|\nabla_x V|^4 \rho_{\infty} dx}
\end{equation}
and 
\begin{align}\label{varphi V 2}
- a\int_{\mathbb{R}^d} u \nabla_x u  \cdot \nabla_x (| \nabla_x V|^2) \rho_{\infty} dx &=-2a \int_{\mathbb{R}^d} u\nabla_x u \cdot \left(\frac{\partial^2 V}{\partial x^2} \nabla_x V\right) \rho_{\infty}dx
\nonumber \\
& \leq 2 a\int_{\mathbb{R}^d} |u||\nabla_x u| \left|\left|\frac{\partial^2 V}{\partial x^2}\right|\right|_F| \nabla_x V| \rho_{\infty}dx \nonumber \\
& \leq \varepsilon \int_{\mathbb{R}^d} |\nabla_x u|^2 | \nabla_x V |^2 \rho_{\infty} dx
+\frac{a^2}{\varepsilon}  \int_{\mathbb{R}^d} u^2\left|\left|  \frac{\partial^2 V}{\partial x^2}\right|\right|_F^2\rho_{\infty} dx,
\end{align}
for any $\delta>0, \, \, \,  \varepsilon>0.$  
\eqref{varphi V}, \eqref{varphi V 1}, and \eqref{varphi V 2} show that 
\begin{multline}\label{varphi V3}
\int_{\mathbb{R}^d}u^2|\nabla_xV|^2 \rho_{\infty} dx+(a-\varepsilon )\int_{\mathbb{R}^d}  |\nabla_x u|^2 | \nabla_x V|^2 \rho_{\infty} dx\\
\leq \frac{1}{2\delta} 
 \int_{\mathbb{R}^d} w^2  \rho_{\infty} dx+\frac{\delta}{2}{\int_{\mathbb{R}^d}u^2|\nabla_x V|^4 \rho_{\infty} dx}
+\frac{a^2}{\varepsilon}  \int_{\mathbb{R}^d} u^2\left|\left|  \frac{\partial^2 V}{\partial x^2}\right|\right|_F^2\rho_{\infty} dx.
\end{multline}
The Poincar\'e inequality \eqref{weighted poin.2} and \eqref{grad varphi} imply 
\begin{align}\label{varphi V4}
\int_{\mathbb{R}^d}u^2|\nabla_x V|^4 \rho_{\infty} dx & \leq \frac{1}{ \kappa_4}\int_{\mathbb{R}^d}| \nabla_x u|^2 (1+|\nabla_x V( x)|^2) \rho_{\infty} dx \nonumber \\
& \leq \frac{1}{2a\kappa_4}\int_{\mathbb{R}^d} w^2  \rho_{\infty} dx+\frac{1}{\kappa_4} \int_{\mathbb{R}^d}|\nabla_x u|^2 |\nabla_x V|^2 \rho_{\infty}dx.
\end{align}
 To estimate the last term in \eqref{varphi V3}, we use the second condition in \eqref{cond.V}, \eqref{grad varphi}, and the Poincar\'e inequality \eqref{weighted poin.1} 
\begin{align}\label{varphi V5}
 \int_{\mathbb{R}^d} u^2\left|\left|  \frac{\partial^2 V}{\partial x^2}\right|\right|_F^2\rho_{\infty} dx
& \leq 2 c^2_3\int_{\mathbb{R}^d} u^2(1+|\nabla_xV|^2)\rho_{\infty} dx 
\nonumber \\ &\leq 2 c^2_3\left(\int_{\mathbb{R}^d} w^2\rho_{\infty} dx+\frac{1}{\kappa_3}  \int_{\mathbb{R}^d} |\nabla_x u|^2\rho_{\infty} dx\right)\nonumber \\ &
\leq 2 c^2_3\left(1+\frac{1}{2a\kappa_3}\right)\int_{\mathbb{R}^d} w^2\rho_{\infty} dx. 
\end{align}
\eqref{varphi V3}, \eqref{varphi V4}, and \eqref{varphi V5} show that 
\begin{multline}\label{varphi V6}
\int_{\mathbb{R}^d}u^2|\nabla_xV|^2 \rho_{\infty} dx+(a-\varepsilon -\frac{\delta}{2\kappa_4}) \int_{\mathbb{R}^d}  |\nabla_x u|^2| \nabla_x V|^2 \rho_{\infty} dx\\
\leq \left[\frac{1}{2\delta} 
 +\frac{\delta}{4a\kappa_4}
+\frac{2 c^2_3a^2}{\varepsilon} \left(1+\frac{1}{2a\kappa_3}\right)\right]\int_{\mathbb{R}^d} w^2\rho_{\infty} dx.
\end{multline}
We choose $\delta$ and $\varepsilon$ such that $a-\varepsilon -\frac{\delta}{2\kappa_4}>0. $ Then, \eqref{varphi V6} shows that  \eqref{C_1} holds with $\displaystyle C_1 \colonequals \frac{1}{a-\varepsilon -\frac{\delta}{2\kappa_4}}\left[\frac{1}{2\delta} 
 +\frac{\delta}{4a\kappa_4}
+\frac{2 c_3^2a^2}{\varepsilon} \left(1+\frac{1}{2a\kappa_3}\right)\right]$.\\

Next, we  prove \eqref{C_2}:
We integrate by parts twice
\begin{align}\label{hess.^2}
\int_{\mathbb{R}^d}\left| \left|\frac{\partial^2 u}{\partial x^2} \right|\right|_F^2 \rho_{\infty} dx & =\sum_{i,j=1}^d \int_{\mathbb{R}^d} \partial^2_{x_i x_j} u\partial^2_{x_i x_j} u\rho_{\infty} dx\nonumber\\& =- \sum_{i,j=1}^d \int_{\mathbb{R}^d} \partial^3_{x_i x_jx_j} u \partial_{x_i } u \rho_{\infty} dx +\sum_{i,j=1}^d \int_{\mathbb{R}^d} \partial^2_{x_i x_j} u \partial_{x_i} u \partial_{x_j} V \rho_{\infty} dx\nonumber \\
&=\sum_{j=1}^d \int_{\mathbb{R}^d} \partial^2_{ x_jx_j} u\, \text{div}_x(\nabla_x u \rho_{\infty}) dx+ \int_{\mathbb{R}^d} \nabla_x^T u\frac{\partial^2 u}{\partial x^2}\nabla_x V \rho_{\infty} dx.
\end{align}
We use  \eqref{ellip.eq.gen} in the form $a\,  \text{div}_x(\nabla_x u \rho_{\infty})=(u-w)\rho_{\infty} $ and the H\"older inequality to estimate  \eqref{hess.^2}  
\begin{align*}
\int_{\mathbb{R}^d}\left|\left| \frac{\partial^2 u}{\partial x^2} \right|\right|_F^2 \rho_{\infty} dx &=a^{-1}\int_{\mathbb{R}^d} \Delta_x u (u-w)\rho_{\infty}  dx+ \int_{\mathbb{R}^d} \nabla_x^T u\frac{\partial^2 u}{\partial x^2} \nabla_x V \rho_{\infty}dx\nonumber \\ & \leq a^{-1}\sqrt{\int_{\mathbb{R}^d} |\Delta_x u|^2\rho_{\infty}  dx}\sqrt{\int_{\mathbb{R}^d} (u-w)^2\rho_{\infty}  dx}\nonumber \\ &\, \, \, \, \, \,\, \,  +\sqrt{\int_{\mathbb{R}^d} \left|  \left|\frac{\partial^2 u}{\partial x^2} \right|\right|_F^2  \rho_{\infty} dx}\sqrt{\int_{\mathbb{R}^d} |\nabla_x u|^2 | \nabla_x V|^2 \rho_{\infty} dx}.
\end{align*}
This inequality and $\displaystyle |\Delta_x u|^2\leq d \left|\left| \frac{\partial^2 
u}{\partial x^2} \right|\right|_F^2 $ show that 
\begin{equation}\label{hess. estimated}
  \sqrt{\int_{\mathbb{R}^d} \left| \left|\frac{\partial^2 
u}{\partial x^2} \right|\right|_F^2  \rho_{\infty} dx} \leq a^{-1}\sqrt{d\int_{\mathbb{R}^d} (u-w)^2\rho_{\infty}  dx} + 
\sqrt{\int_{\mathbb{R}^d} |\nabla_x u|^2 | \nabla_x V|^2 \rho_{\infty} dx}.
\end{equation}
Finally, \eqref{hess. estimated}, \eqref{grad varphi}, and \eqref{C_1} yield \eqref{C_2}.

\end{proof}

\subsection{Some matrix inequalities}
\begin{lemma}
Let $A\in \mathbb{R}^{d\times d}$ be a symmetric, positive definite matrix. For any $u, \, v \in \mathbb{R}^d,$ we have 
\begin{equation}\label{Mat.ineq}
2u^TAv\leq u^T Au+v^TAv \, \, \, \, \text{ and } \, \, \, \, 2u\cdot v\leq u^T Au+v^TA^{-1}v.
\end{equation}  
\end{lemma}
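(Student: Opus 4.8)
The plan is to derive both inequalities from the single elementary fact that a symmetric positive definite matrix $A$ satisfies $w^T A w \geq 0$ for every $w \in \mathbb{R}^d$, together with the observation that $A^{-1}$ is again symmetric positive definite.

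For the first inequality, I would simply expand the nonnegative quadratic form evaluated at $u - v$:
\begin{equation*}
0 \leq (u-v)^T A (u-v) = u^T A u - u^T A v - v^T A u + v^T A v = u^T A u + v^T A v - 2 u^T A v,
\end{equation*}
where in the last step I used the symmetry of $A$ to write $v^T A u = (v^T A u)^T = u^T A^T v = u^T A v$. Rearranging gives $2 u^T A v \leq u^T A u + v^T A v$.

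For the second inequality, the cleanest route is to apply the first inequality with $A$ replaced by $A^{-1}$ (which is symmetric and positive definite since $A$ is) and with the vector $u$ replaced by $Au$. This yields
\begin{equation*}
2 (Au)^T A^{-1} v \leq (Au)^T A^{-1} (Au) + v^T A^{-1} v.
\end{equation*}
Now $(Au)^T A^{-1} v = u^T A^T A^{-1} v = u^T v = u \cdot v$ and $(Au)^T A^{-1}(Au) = u^T A^T A^{-1} A u = u^T A u$, so the bound becomes $2 u \cdot v \leq u^T A u + v^T A^{-1} v$, as claimed. (Equivalently, one may introduce the symmetric positive definite square root $B = A^{1/2}$ and apply the Cauchy--Schwarz or arithmetic--geometric mean inequality to the vectors $Bu$ and $B^{-1}v$, since $u \cdot v = (Bu) \cdot (B^{-1} v)$ and $|Bu|^2 = u^T A u$, $|B^{-1}v|^2 = v^T A^{-1} v$.)

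There is no real obstacle here: both statements are standard consequences of positive definiteness, and the only point worth stating explicitly is that passing from $A$ to $A^{-1}$ preserves symmetry and positive definiteness, which justifies the substitution used for the second inequality.
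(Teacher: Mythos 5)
Your proof is correct. It does take a different, and more elementary, route than the paper's. The paper observes that $A \otimes \begin{pmatrix}1 & -1\\ -1 & 1\end{pmatrix} = \begin{pmatrix}A & -A\\ -A & A\end{pmatrix}$ is positive semi-definite (invoking the fact that Kronecker products of positive semi-definite matrices are positive semi-definite, a lemma they reuse elsewhere in the paper) and then reads off the first inequality by testing this block matrix against $\begin{pmatrix}u\\v\end{pmatrix}$; for the second inequality they substitute $v \mapsto A^{-1}v$. You instead expand $(u-v)^T A (u-v)\geq 0$ directly, which avoids any appeal to Kronecker product theory, and for the second inequality you apply the first with $A\mapsto A^{-1}$ and $u\mapsto Au$. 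Both substitutions lead to the same conclusion, and $v\mapsto A^{-1}v$ is slightly more economical, but this is a cosmetic difference. Your version is self-contained and arguably preferable in isolation; the paper's phrasing is consistent with its repeated use of Kronecker-product positivity in nearby lemmas, which is likely why it was written that way.
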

\begin{proof}
Since the matrices  $A\in \mathbb{R}^{d\times d}$ and  $\begin{pmatrix}
1&-1\\
-1& 1
\end{pmatrix}\in \mathbb{R}^{2\times 2}$ are positive semi-definite, their  Kronecker product 
$\begin{pmatrix}
A&-A\\-A&A
\end{pmatrix} $ is also positive semi-definite, see  \cite[Corollary 4.2.13]{MA}. Hence,
$$u^T Au+v^TAv-2u^TAv=\begin{pmatrix}
u\\v
\end{pmatrix}^T \begin{pmatrix}
A&-A\\-A&A
\end{pmatrix} \begin{pmatrix}
u\\v
\end{pmatrix}\geq 0.$$
The second inequality follows by replacing $v$ with $A^{-1}v.$
\end{proof}

We mention an alternative proof of the above lemma: If we consider $\mathbb{R}^d$ as a Hilbert space with the inner product $\langle u,v \rangle \colonequals u^TAv,$  then, in this Hilbert space,  the first matrix inequality in \eqref{Mat.ineq} coincides with Young's inequality.
 
\begin{lemma} Let $a_{ij}\colonequals \frac{\delta_{ij}+p_ip_j}{p_0},$ $p\in \mathbb{R}^d.$ Then
\begin{equation}\label{sum<d()}
\sum _{k,l,i,j=1}^d {p_0^2} \left(\delta_{kl}-\frac{p_kp_l}{p_0^2}\right) \left(\delta_{ij}-\frac{p_ip_j}{p_0^2}\right)\nabla_p a_{lj}\otimes  \nabla_p a_{ki}\leq d \left(I-\frac{p\otimes p}{p_0^2}\right) 
\end{equation}
holds for all $p\in \mathbb{R}^d.$
\end{lemma}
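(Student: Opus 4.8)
Fix an arbitrary vector $\xi\in\mathbb{R}^d$ and write $\Pi:=I-\frac{p\otimes p}{p_0^2}$ (so that $\Pi_{kl}=\delta_{kl}-\frac{p_kp_l}{p_0^2}$); recall from the paper that $\Pi$ is symmetric positive definite with $\Pi^{-1}=I+p\otimes p$, and note $\Pi p=p/p_0^2$. The plan is to show that the quadratic form of the matrix on the left of \eqref{sum<d()} at $\xi$ is at most $d\,\xi^T\Pi\xi$. First I would differentiate $a_{lj}=\frac{\delta_{lj}+p_lp_j}{p_0}$ to get $(\nabla_p a_{lj})_m=\frac{\delta_{lm}p_j+\delta_{jm}p_l}{p_0}-\frac{(\delta_{lj}+p_lp_j)p_m}{p_0^3}$, so that the scalars $M_{lj}:=\xi\cdot\nabla_p a_{lj}$ assemble into the \emph{symmetric} matrix
\[ M=\frac{\xi p^T+p\xi^T}{p_0}-\frac{\xi\cdot p}{p_0^3}\,\Pi^{-1}. \]

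Next, contracting the four indices $i,j,k,l$ in $\sum_{k,l,i,j}p_0^2\,\Pi_{kl}\Pi_{ij}\,\nabla_pa_{lj}\otimes\nabla_pa_{ki}$ and using that $\Pi$ and $M$ are symmetric, I would obtain
\[ \xi^T\!\left(\sum_{k,l,i,j}p_0^2\,\Pi_{kl}\Pi_{ij}\,\nabla_pa_{lj}\otimes\nabla_pa_{ki}\right)\!\xi = p_0^2\,\mathrm{tr}\big(\Pi M\Pi M\big)=p_0^2\,\big\|\Pi^{1/2}M\Pi^{1/2}\big\|_F^2. \]
Then, using $\Pi^{1/2}\Pi^{-1}\Pi^{1/2}=I$ together with the identity $\xi\cdot p=p_0^2\,(\Pi^{1/2}\xi)\cdot(\Pi^{1/2}p)$ (a consequence of $\Pi p=p/p_0^2$), the matrix simplifies, with $u:=\Pi^{1/2}\xi$ and $v:=\Pi^{1/2}p$, to $\Pi^{1/2}M\Pi^{1/2}=\frac{1}{p_0}\big(uv^T+vu^T-(u\cdot v)I\big)$. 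A direct trace computation then gives
\[ \big\|uv^T+vu^T-(u\cdot v)I\big\|_F^2 = 2|u|^2|v|^2+(d-2)(u\cdot v)^2\le d\,|u|^2|v|^2, \]
where the last inequality is the Cauchy--Schwarz inequality $(u\cdot v)^2\le|u|^2|v|^2$ for $d\ge 2$ and an identity for $d=1$.

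Finally I would compute $|v|^2=p^T\Pi p=|p|^2/p_0^2\le 1$ and $|u|^2=\xi^T\Pi\xi$, which yields
\[ \xi^T\!\left(\sum_{k,l,i,j}p_0^2\,\Pi_{kl}\Pi_{ij}\,\nabla_pa_{lj}\otimes\nabla_pa_{ki}\right)\!\xi \le d\,\frac{|p|^2}{p_0^2}\,\xi^T\Pi\xi\le d\,\xi^T\Pi\xi; \]
since $\xi\in\mathbb{R}^d$ was arbitrary, this is exactly \eqref{sum<d()} (indeed with the slightly sharper constant $d|p|^2/p_0^2$). The routine but delicate part is the middle step: collapsing the fourfold sum of outer products into the single trace $p_0^2\,\mathrm{tr}(\Pi M\Pi M)$, and then spotting the cancellation that turns $\Pi^{1/2}M\Pi^{1/2}$ into a matrix of the simple form $\tfrac1{p_0}(uv^T+vu^T-(u\cdot v)I)$ — once that structure is exposed, everything else is elementary linear algebra. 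I expect keeping track of this cancellation (and of all the powers of $p_0$) to be the main obstacle.
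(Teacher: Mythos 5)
Your proof is correct, and it takes a genuinely different and cleaner route than the paper's. The paper proceeds by brute force: it evaluates the four-fold index contraction $\sum_{k,l,i,j}p_0^2\Pi_{kl}\Pi_{ij}(\nabla_p a_{lj})_m(\nabla_p a_{ki})_n$ entry by entry (computing the partial sums $\sum_j\Pi_{ij}\partial_{p_n}a_{lj}$ first), arrives at the exact identity
\[
\sum_{k,l,i,j}p_0^2\Pi_{kl}\Pi_{ij}\,\nabla_p a_{lj}\otimes\nabla_p a_{ki}
= 2\Pi-\frac{2}{p_0^2}\Pi+\frac{(d-2)\,p\otimes p}{p_0^4}=\frac{2|p|^2}{p_0^2}\Pi+\frac{(d-2)\,p\otimes p}{p_0^4},
\]
and then bounds the right-hand side by $d\Pi$ via two elementary matrix inequalities. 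You instead pull a single test vector $\xi$ through, identify the quadratic form as $p_0^2\,\mathrm{tr}(\Pi M\Pi M)$ with the symmetric matrix $M=(\xi\cdot\nabla_p a_{lj})_{lj}$, and observe that the substitution $u=\Pi^{1/2}\xi$, $v=\Pi^{1/2}p$ collapses $\Pi^{1/2}M\Pi^{1/2}$ to the rank-two-plus-scalar form $\frac{1}{p_0}(uv^T+vu^T-(u\cdot v)I)$; the Frobenius norm of this object is computed by a one-line trace identity and bounded by Cauchy--Schwarz. I checked the intermediate identities: $\Pi p=p/p_0^2$, $\Pi^{1/2}\Pi^{-1}\Pi^{1/2}=I$, $\|uv^T+vu^T-(u\cdot v)I\|_F^2=2|u|^2|v|^2+(d-2)(u\cdot v)^2$, and $|v|^2=|p|^2/p_0^2$ are all correct, and your final expression $\xi^T\bigl[\frac{2|p|^2}{p_0^2}\Pi+\frac{(d-2)}{p_0^4}p\otimes p\bigr]\xi$ agrees with the paper's exact formula, so the two computations are consistent. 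What your route buys: you avoid essentially all of the index bookkeeping, the cancellation $\Pi^{1/2}\Pi^{-1}\Pi^{1/2}=I$ makes the structure transparent, you obtain the marginally sharper bound $d\,\frac{|p|^2}{p_0^2}\,\Pi$, and the case $d=1$ is handled explicitly (the paper's inequality chain $\frac{(d-2)p\otimes p}{p_0^4}\le\frac{d-2}{p_0^2}I\le(d-2)\Pi$ implicitly assumes $d\ge2$, though the final result still holds for $d=1$ by the exact identity). The trade-off is that the paper's approach produces the exact matrix, which some readers may find easier to audit line by line.
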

\begin{proof}
We compute  the element  which is in the intersection of  the $m^{\text{th}}$ column and the $n^{\text{th}}$ row:
\begin{multline*}
\sum_{k,l,i,j=1}^d {p_0^2}  \left(\delta_{kl}-\frac{p_kp_l}{p_0^2}\right)  \left(\delta_{ij}-\frac{p_ip_j}{p_0^2}\right) \partial_{p_m}a_{ki} \partial_{p_n}a_{lj}\\=\sum_{l,i =1}^d{p_0^2}  \left(\sum_{j=1}^d  \left(\delta_{ij}-\frac{p_ip_j}{p_0^2}\right)\partial_{p_n}a_{lj}\right)\left(\sum_{k=1}^d   \left(\delta_{kl}-\frac{p_kp_l}{p_0^2}\right)  \partial_{p_m}a_{ki}\right). 
\end{multline*}
We  first compute the sums in the brackets: 
\begin{align}\label{j sum}
\sum_{j=1}^d \left(\delta_{ij}-\frac{p_ip_j}{p_0^2}\right) \partial_{p_n}a_{lj}&=\sum_{j=1}^d \left(\delta_{ij}-\frac{p_ip_j}{p_0^2}\right)\left(\frac{p_l\delta_{nj}+p_j \delta_{nl}}{p_0}-\frac{(\delta_{lj}+p_lp_j)p_n}{p_0^3} \right)\nonumber \\
&=\frac{p_l\delta_{ni}+p_i\delta_{nl}}{p_0}-\frac{(\delta_{li}+p_lp_i)p_n}{p^3_0}-\frac{p_i}{p^2_0}\left(\frac{p_lp_n+|p|^2\delta_{nl}}{p_0}-\frac{(p_l+p_l|p|^2)p_n}{p^3_0}\right)\nonumber \\
&=\frac{p_l\delta_{ni}+p_i\delta_{nl}}{p_0}-\frac{(\delta_{li}+p_lp_i)p_n}{p^3_0}-\frac{p_ip_lp_n+p_i|p|^2\delta_{nl}}{p^3_0}+\frac{p_lp_ip_n}{p^3_0}\nonumber \\
&=\frac{p_l\delta_{ni}+p_i\delta_{nl}}{p_0}-\frac{(\delta_{li}+p_lp_i)p_n}{p^3_0}-\frac{p_i|p|^2\delta_{nl}}{p^3_0}\nonumber \\
&=\frac{p_l\delta_{ni}}{p_0}-\frac{(\delta_{li}+p_lp_i)p_n}{p^3_0}+\frac{p_i\delta_{nl}}{p^3_0}\nonumber \\
&=\frac{p_l\delta_{ni}}{p_0}-\frac{p_n\delta_{li}}{p_0^3}+\frac{p_i\delta_{nl}}{p^3_0}-\frac{p_lp_i p_n}{p^3_0}.
\end{align}
Similarly, we can show
\begin{equation}\label{k sum}\sum_{k=1}^d \left(\delta_{kl}-\frac{p_kp_l}{p_0^2}\right)  \partial_{p_m}a_{ki}=\frac{p_i\delta_{ml}}{p_0}-\frac{p_m\delta_{li}}{p_0^3}+\frac{p_l\delta_{mi}}{p^3_0}-\frac{p_lp_ip_m}{p^3_0}.
\end{equation}
Next, we sum the product of \eqref{j sum} and \eqref{k sum} with respect to $i$ and $l:$
$$\sum_{l,i=1}^d\left(\frac{p_l\delta_{ni}}{p_0}-\frac{p_n\delta_{li}}{p_0^3}+\frac{p_i\delta_{nl}}{p^3_0}-\frac{p_lp_i p_n}{p^3_0}\right)\left(\frac{p_i\delta_{ml}}{p_0}-\frac{p_m\delta_{li}}{p_0^3}+\frac{p_l\delta_{mi}}{p^3_0}-\frac{p_lp_ip_m}{p^3_0} \right)$$
$$=\sum_{l=1}^d\left[\frac{p_l}{p_0}\left(\frac{p_n\delta_{ml}}{p_0}-\frac{p_m\delta_{ln}}{p_0^3}+\frac{p_l\delta_{mn}}{p^3_0}-\frac{p_lp_np_m}{p^3_0}\right)-\frac{p_n}{p_0^3} \left(\frac{p_l\delta_{ml}}{p_0}-\frac{p_m}{p_0^3}+\frac{p_l\delta_{ml}}{p^3_0}-\frac{p^2_lp_m}{p^3_0} \right)\right.$$
$$\left.+\left(\frac{\delta_{nl}}{p^3_0}-\frac{p_l p_n}{p^3_0}\right)\left(\frac{|p|^2\delta_{ml}}{p_0}-\frac{p_mp_l}{p_0^3}+\frac{p_lp_m}{p^3_0}-\frac{p_lp_m|p|^2}{p^3_0} \right)\right]$$
$$=\left(\frac{p_np_m}{p^2_0}-\frac{p_mp_n}{p_0^4}+\frac{|p|^2\delta_{mn}}{p^4_0}-\frac{p_np_m|p|^2}{p^4_0}\right)- \left(\frac{p_np_m}{p^4_0}-\frac{dp_mp_n}{p_0^6}+\frac{p_mp_n}{p^6_0}-\frac{p_np_m|p|^2}{p^6_0} \right)$$
$$+\left(\frac{|p|^2\delta_{mn}}{p^4_0}-\frac{p_np_m|p|^2}{p^6_0} \right)-\left(\frac{p_np_m|p|^2}{p^4_0}-\frac{p_np_m|p|^4}{p^6_0} \right)$$
$$=\frac{2|p|^2\delta_{mn}}{p_0^4}+\frac{(d-2)p_mp_n}{p_0^6}-\frac{p_np_m|p|^2}{p^4_0}-\frac{p_np_m|p|^2}{p^6_0}+\frac{p_np_m|p|^4}{p^6_0}$$
$$=\frac{2\delta_{mn}}{p_0^2}-\frac{2\delta_{mn}}{p_0^4}+\frac{(d-2)p_mp_n}{p_0^6}-\frac{p_np_m}{p^2_0}+\frac{p_np_m}{p^4_0}+\frac{p_mp_n}{p_0^2}-\frac{3p_mp_n}{p_0^4}+\frac{2p_mp_n}{p_0^6}$$
$$=\frac{2\delta_{mn}}{p_0^2}-\frac{2\delta_{mn}}{p_0^4}+\frac{d p_mp_n}{p_0^6}-\frac{2p_mp_n}{p_0^4}=\frac{2}{p_0^2}(\delta_{mn}-\frac{p_np_m}{p_0^2})-\frac{2}{p_0^4}(\delta_{mn}-\frac{p_np_m}{p_0^2})+\frac{(d-2) p_mp_n}{p_0^6}.$$
This shows $$\sum _{k,l,i,j=1}^d {p_0^2} \left(\delta_{kl}-\frac{p_kp_l}{p_0^2}\right) \left(\delta_{ij}-\frac{p_ip_j}{p_0^2}\right)\nabla_p a_{lj}\otimes  \nabla_p a_{ki}=2\left(I-\frac{p \otimes p}{p_0^2}\right)-\frac{2}{p_0^2}\left(I-\frac{p\otimes p}{p_0^2}\right)+\frac{(d-2) p\otimes p}{p_0^4}.$$
The claimed inequality follows from 
$$\frac{(d-2) p\otimes p}{p_0^4}<\frac{d-2}{p_0^2}I\leq (d-2)\left(I-\frac{p \otimes p}{p_0^2}\right). $$
\end{proof}

\begin{lemma}
Let $a_{ij}\colonequals \frac{\delta_{ij}+p_ip_j}{p_0},$ $p\in \mathbb{R}^d.$ Then 
\begin{equation}\label{sum d(I+pop)}
\sum _{k,l,i,j=1}^d {p_0^2} \left(\delta_{kl}-\frac{p_kp_l}{p_0^2}\right) \left(\delta_{ij}-\frac{p_ip_j}{p_0^2}\right) ( (I+p\otimes p) \nabla_p a_{lj})\otimes   ((I+p\otimes p) \nabla_p a_{ki})\leq d (I+p\otimes p)
\end{equation}
holds for all $p\in \mathbb{R}^d.$
\end{lemma}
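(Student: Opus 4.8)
The plan is to reduce \eqref{sum d(I+pop)} to the already-proven inequality \eqref{sum<d()} by a congruence transformation. Set $B := I + p\otimes p$. First I would record two elementary facts: $B$ is symmetric positive definite (its eigenvalues are $1$ and $p_0^2$), and $B^{-1} = I - \frac{p\otimes p}{p_0^2}$, which is checked by a one-line multiplication using $|p|^2 = p_0^2 - 1$. In particular $Bp = p_0^2\, p$ and $B\big(I - \frac{p\otimes p}{p_0^2}\big)B = B$.

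The key observation is that the matrix on the left-hand side of \eqref{sum d(I+pop)}, call it $M$, equals $B N B$, where $N$ is the matrix on the left-hand side of \eqref{sum<d()}. Indeed, for any $u,v\in\mathbb{R}^d$ one has $(Bu)\otimes(Bv) = B(u\otimes v)B^T = B(u\otimes v)B$ since $B$ is symmetric; applying this with $u = \nabla_p a_{lj}$, $v = \nabla_p a_{ki}$ and pulling $B$ (which does not depend on the summation indices) out of the quadruple sum gives $M = B N B$.

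Since $B$ is invertible, the substitution $w = Bv$ is a bijection of $\mathbb{R}^d$, and for all $v\in\mathbb{R}^d$ we have $v^T M v = (Bv)^T N (Bv) = w^T N w$ while $v^T\big(d(I+p\otimes p)\big)v = d\, v^T B v = d\, w^T B^{-1} w$. Hence $M \le d(I+p\otimes p)$ is equivalent to $N \le d\, B^{-1} = d\big(I - \frac{p\otimes p}{p_0^2}\big)$, which is precisely the conclusion of \eqref{sum<d()}. This finishes the proof.

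I do not expect a real obstacle once the factorization $M = B N B$ is spotted; the only point requiring care is the order of the Kronecker factors and the symmetry of $B$ when moving $B$ outside the sum. A self-contained but longer alternative would be to mimic the computation in the proof of \eqref{sum<d()}: the bracketed sums \eqref{j sum} and \eqref{k sum} get left-multiplied by $I + p\otimes p$ (which only rescales their component along $p$ by $p_0^2$), and after the same contraction one arrives at $M = 2B - \frac{2}{p_0^2}B + (d-2)(p\otimes p)$, whence $d(I+p\otimes p) - M = (d-2)I + \frac{2}{p_0^2}(I+p\otimes p)$, which is positive semidefinite (for $d\ge 2$ both summands are; for $d=1$ it is the positive scalar $1$). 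The congruence argument avoids this bookkeeping entirely, so I would present it as the main proof.
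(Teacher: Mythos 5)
Your congruence argument is correct and noticeably slicker than what the paper does. The paper re-runs the entire explicit element-wise computation: it expands $\partial_{p_n}a_{lj}+p_n\,p\cdot\nabla_p a_{lj}$, performs the quadruple contraction from scratch, and lands on
$M=d(I+p\otimes p)-(d-2)I-\tfrac{2}{p_0^2}(I+p\otimes p)$,
from which the bound is immediate. You instead observe that, writing $B:=I+p\otimes p$ (so $B^{-1}=I-\tfrac{p\otimes p}{p_0^2}$), the sum $M$ in \eqref{sum d(I+pop)} factors as $B N B$ with $N$ the sum in \eqref{sum<d()}, because $(Bu)\otimes(Bv)=B(u\otimes v)B$ and $B$ is constant in the contraction indices; then $M\le dB$ is equivalent, by the congruence with the invertible symmetric $B^{-1}$, to $N\le dB^{-1}$, which is exactly the already-proven lemma. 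This is a genuinely different and cleaner route: it reveals \emph{why} the two lemmas are the same statement up to a change of metric, reuses the hard work already done, and avoids re-deriving the closed form of $M$. The only thing it trades away is the explicit sharpened identity for $M$ that the paper obtains as a by-product (namely the exact expression $d(I+p\otimes p)-(d-2)I-\tfrac{2}{p_0^2}(I+p\otimes p)$), but that identity is not used elsewhere, so nothing is lost. Your sketch of the alternative direct computation is also consistent with the paper's result. If you present the congruence proof as the main one, I'd only suggest stating explicitly the elementary fact you are invoking — that $A\le C$ iff $P^TAP\le P^TCP$ for invertible $P$ — since that is the single load-bearing step.
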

\begin{proof}
We compute  the element  which is in the intersection of  the $m^{\text{th}}$ column and the $n^{\text{th}}$ row:
$$\sum _{k,l,i,j=1}^d {p_0^2} \left(\delta_{kl}-\frac{p_kp_l}{p_0^2}\right) \left(\delta_{ij}-\frac{p_ip_j}{p_0^2}\right) ( \partial_{p_n}a_{lj}+p_n p\cdot \nabla_p a_{lj})(\partial_{p_m}a_{ki}+p_m p\cdot \nabla_p a_{ki})$$
$$=\sum_{l,i=1}^d p_0^2\left(\sum _{j=1}^d    \left(\delta_{ij}-\frac{p_ip_j}{p_0^2}\right) ( \partial_{p_n}a_{lj}+p_n p\cdot \nabla_p a_{lj})\right) \left( \sum_{k=1}^d \left(\delta_{kl}-\frac{p_kp_l}{p_0^2}\right)(\partial_{p_m}a_{ki}+p_m p\cdot \nabla_p a_{ki})\right).$$
We want to compute the sums in the brackets. We first compute
$$p\cdot \nabla_p a_{lj}=\sum_{r=1}^dp_r\left(\frac{p_l\delta_{rj}+p_j \delta_{rl}}{p_0}-\frac{(\delta_{lj}+p_lp_j)p_r}{p_0^3}\right)=\frac{2p_lp_j}{p_0}-\frac{(\delta_{lj}+p_lp_j)|p|^2}{p_0^3}$$
and so 
\begin{align*}\partial_{p_n}a_{lj}+p_n(p\cdot \nabla_p a_{lj})&=\frac{p_l\delta_{nj}+p_j \delta_{nl}}{p_0}-\frac{(\delta_{lj}+p_lp_j)p_n}{p_0^3}+\frac{2p_lp_jp_n}{p_0}-\frac{(\delta_{lj}+p_lp_j)p_n|p|^2}{p_0^3}\\
&=\frac{p_l\delta_{nj}+p_j \delta_{nl}-\delta_{lj}p_n}{p_0}+\frac{p_lp_jp_n}{p_0}.
\end{align*}
This helps us to compute 
$$\sum _{j=1}^d    \left(\delta_{ij}-\frac{p_ip_j}{p_0^2}\right) ( \partial_{p_n}a_{lj}+p_n p\cdot \nabla_p a_{lj})=\sum_{j=1}^d \left(\delta_{ij}-\frac{p_ip_j}{p_0^2}\right)\left(\frac{p_l\delta_{nj}+p_j \delta_{nl}-\delta_{lj}p_n}{p_0}+\frac{p_lp_jp_n}{p_0} \right)$$
$$=\frac{p_l\delta_{ni}+p_i \delta_{nl}-\delta_{li}p_n}{p_0}+\frac{p_lp_ip_n}{p_0}-\frac{p_i}{p_0^2}\left(\frac{|p|^2 \delta_{nl}}{p_0}+\frac{p_lp_n|p|^2}{p_0} \right)=\frac{p_l\delta_{ni}-\delta_{li}p_n}{p_0}+\frac{( \delta_{nl}+p_lp_n)p_i}{p^3_0}.$$
Similarly, we compute
$$\sum_{k=1}^d \left(\delta_{kl}-\frac{p_kp_l}{p_0^2}\right)(\partial_{p_m}a_{ki}+p_m p\cdot \nabla_p a_{ki})=\frac{p_i\delta_{ml}-\delta_{li}p_m}{p_0}+\frac{( \delta_{mi}+p_ip_m)p_l}{p^3_0}.$$
We sum the product of the last two equations with respect to $i$ and $l:$
$$\sum_{l,i=1}^d p_0^2\left(\sum _{j=1}^d    \left(\delta_{ij}-\frac{p_ip_j}{p_0^2}\right) ( \partial_{p_n}a_{lj}+p_n p\cdot \nabla_p a_{lj})\right) \left( \sum_{k=1}^d \left(\delta_{kl}-\frac{p_kp_l}{p_0^2}\right)(\partial_{p_m}a_{ki}+p_m p\cdot \nabla_p a_{ki})\right)$$
$$=\sum_{l,i=1}^d p_0^2\left(\frac{p_l\delta_{ni}-\delta_{li}p_n}{p_0}+\frac{( \delta_{nl}+p_lp_n)p_i}{p^3_0}\right)\left(\frac{p_i\delta_{ml}-\delta_{li}p_m}{p_0}+\frac{( \delta_{mi}+p_ip_m)p_l}{p^3_0}\right)$$
$$=\sum_{l=1}^d p_0^2\left[\frac{p_l}{p_0}\left(\frac{p_n\delta_{ml}-\delta_{ln}p_m}{p_0}+\frac{( \delta_{mn}+p_np_m)p_l}{p^3_0}\right)-\frac{p_n}{p_0}\left(\frac{p_l\delta_{ml}-p_m}{p_0}+\frac{( \delta_{ml}+p_lp_m)p_l}{p^3_0}\right)\right.$$
$$\left.+\frac{( \delta_{nl}+p_lp_n)}{p^3_0}\left(\frac{|p|^2\delta_{ml}-p_lp_m}{p_0}+\frac{( p_m+|p|^2p_m)p_l}{p^3_0}\right)\right]$$
$$=\frac{( \delta_{mn}+p_np_m)|p|^2}{p^2_0}+(d-1)p_mp_n-\frac{p_np_m+p_np_m|p|^2}{p^2_0}+\frac{|p|^2\delta_{mn}}{p^2_0}+\frac{p_np_m|p|^2}{p_0^2}$$
$$= d (\delta_{mn}+p_np_m)-(d-2)\delta_{mn} -\frac{2(\delta_{mn}+p_np_m)}{p^2_0}.$$
This shows 
$$\sum _{k,l,i,j=1}^d {p_0^2} \left(\delta_{kl}-\frac{p_kp_l}{p_0^2}\right) \left(\delta_{ij}-\frac{p_ip_j}{p_0^2}\right) ( (I+p\otimes p) \nabla_p a_{lj})\otimes   ((I+p\otimes p) \nabla_p a_{ki})$$
$$=d (I+p\otimes p)-(d-2)I -\frac{2(I+p\otimes p)}{p^2_0}\leq d (I+p\otimes p).$$
\end{proof}
\begin{lemma}\label{heavy comp} Let $P=P(x,p)$ be the matrix defined in \eqref{Pmat}. Then there are constant $\theta_1>0$ and $\theta_2>0$ such that 
$$\sum_{i,j=1}^d \frac{1}{f_{\infty}}\partial_{p_j}(\partial_{p_i}Pa_{ij}f_{\infty})\leq \begin{pmatrix}
\frac{2\theta_1\varepsilon^3}{V_0^3p_0^3} (I-\frac{p\otimes p}{p_0^2})& 0\\
0& \frac{2\theta_2\varepsilon}{V_0p_0} (I+p\otimes p)
\end{pmatrix}, \, \, \, \, \forall \, x, \,p\in \mathbb{R}^d.$$ 
\end{lemma}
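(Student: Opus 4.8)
The plan is to compute the matrix $\sum_{i,j=1}^d \frac{1}{f_\infty}\partial_{p_j}(\partial_{p_i}P\,a_{ij}f_\infty)$ explicitly and then bound it from above by the claimed block-diagonal matrix. The starting point is the block structure
\[
P = \begin{pmatrix} \frac{2\varepsilon^3}{V_0^3p_0^3}\bigl(I-\frac{p\otimes p}{p_0^2}\bigr) & \frac{\varepsilon^2}{V_0^2p_0^2}I \\[1mm] \frac{\varepsilon^2}{V_0^2p_0^2}I & \frac{2\varepsilon}{V_0p_0}(I+p\otimes p)\end{pmatrix},
\]
whose entries depend on $p$ only through $p_0=\sqrt{1+|p|^2}$ and through the rank-one pieces $p\otimes p$ and $I-\frac{p\otimes p}{p_0^2}$. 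Since $x$ enters only through the scalar prefactor $1/V_0^k(x)$, which is \emph{bounded} uniformly in $x$ (this is the only role the factor $V_0$ plays here), I would factor it out and reduce everything to estimating a fixed, purely $p$-dependent expression times $\varepsilon$, $\varepsilon^2$, or $\varepsilon^3$. The first step, then, is to differentiate each block of $P$ in $p$, using $\partial_{p_i}p_0 = p_i/p_0$ and $\partial_{p_i}f_\infty = -\frac{p_i}{p_0}f_\infty$, so that $\frac{1}{f_\infty}\partial_{p_j}(\cdot\, f_\infty) = \partial_{p_j}(\cdot) - \frac{p_j}{p_0}(\cdot)$; the second step is to contract against $a_{ij}=\frac{\delta_{ij}+p_ip_j}{p_0}$ and sum over $i,j$, using the identities $\sum_i a_{ij}\frac{p_i}{p_0}=p_j$ and $\sum_i\partial_{p_i}a_{ij}=\frac{dp_j}{p_0}$ already recorded in \eqref{aux eq}.

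After this computation one obtains a symmetric $2d\times 2d$ matrix whose $(1,1)$-block is $\varepsilon^3$ times a matrix-valued rational function of $p$ built from $I$, $p\otimes p$, and powers of $p_0$; its $(2,2)$-block is $\varepsilon$ times such a function; and its off-diagonal block is $\varepsilon^2$ times such a function. The key structural observation to record is that \emph{every} scalar coefficient appearing is a bounded function of $p$ (each is a ratio of a polynomial in $|p|$ to a strictly larger power of $p_0$), so each block is bounded in operator norm by a constant multiple of the respective power of $\varepsilon$ times $I$. Then I would invoke the elementary bounds $\tfrac{1}{p_0^2}I \le I - \frac{p\otimes p}{p_0^2}$ is \emph{false}, so instead I use the correct direction: $I-\frac{p\otimes p}{p_0^2}\ge \frac{1}{p_0^2}I$ (from \eqref{hessian inq.}) and $I+p\otimes p\ge I$, together with $\|I\|\le p_0^2(I-\frac{p\otimes p}{p_0^2})$... more carefully, one uses that $cI \le C p_0^2\bigl(I-\frac{p\otimes p}{p_0^2}\bigr)$ and $cI\le C(I+p\otimes p)$ for suitable constants, so that any bounded-coefficient $(1,1)$-block is dominated by $\frac{\theta_1\varepsilon^3}{p_0^3}(I-\frac{p\otimes p}{p_0^2})$ after absorbing the bounded factor $V_0^{-3}$, and similarly for the $(2,2)$-block. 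To control the off-diagonal $\varepsilon^2$ block and fold it into the diagonal, I would apply the matrix inequality \eqref{Mat.ineq}: writing the full matrix as an expression $u^\top(\cdots)u$ with $u=(\nabla_x h,\nabla_p h)^\top$, the cross terms of size $\varepsilon^2$ are Young-estimated against the $\varepsilon^3$ diagonal block scaled by $1/\varepsilon$ and the $\varepsilon$ diagonal block scaled by $\varepsilon$, producing contributions of size $\varepsilon^2$ to each diagonal block — harmless because they share the same $\varepsilon$-power structure and can be absorbed by enlarging $\theta_1,\theta_2$.

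The main obstacle I anticipate is purely bookkeeping: the differentiation $\partial_{p_j}\partial_{p_i}$ acting on the rank-one matrix $\frac{p\otimes p}{p_0^2}$ and on $\frac{I+p\otimes p}{p_0}$ produces a proliferation of terms (tensor products $e_i\otimes p$, $p\otimes e_j$, $e_i\otimes e_j$, $p\otimes p$ with various $p_0$-weights), and one must carefully collect them and verify that no term with a ``bad'' weight (e.g. growing in $|p|$) survives after the contraction with $a_{ij}$. I expect the contraction to be the saving grace — much as in the proofs of Lemmas establishing \eqref{sum<d()} and \eqref{sum d(I+pop)}, summing $a_{ij}$ against these tensors collapses many terms via $\sum_i a_{ij}p_i/p_0 = p_j$ and cancellations among the $p_0$-weights. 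Once the explicit bounded form is in hand, the passage to the stated upper bound is immediate. I would present the computation in compressed form, displaying the differentiated blocks, then stating the resulting matrix, then reading off the bound, with the constants $\theta_1,\theta_2$ defined as the suprema over $p$ of the (finite) bounded coefficients that emerge.
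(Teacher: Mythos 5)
Your plan follows the same overall route as the paper: expand the derivative, contract against $a_{ij}$ using the identities in \eqref{aux eq}, observe that every scalar coefficient is a bounded function of $p$ with the correct $\varepsilon$-power in each block ($\varepsilon^3$ for the $(1,1)$ block, $\varepsilon^2$ for the off-diagonal, $\varepsilon$ for the $(2,2)$ block), and then absorb the off-diagonal block into the diagonal via the matrix inequality \eqref{Mat.ineq}. The paper does exactly this, organizing the computation by first splitting $\sum_{i,j}\tfrac{1}{f_\infty}\partial_{p_j}(\partial_{p_i}P\,a_{ij}f_\infty)=\sum_{i,j}\partial^2_{p_ip_j}P\,a_{ij}+\sum_i\partial_{p_i}P\cdot\bigl(\tfrac{d}{p_0}-1\bigr)p_i$, computing each piece, and then showing that the resulting matrices with off-diagonal blocks are bounded above by block-diagonal ones (the step where the second inequality in \eqref{Mat.ineq}, i.e.\ the Schur-complement criterion, is invoked).

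However, there is a gap in the one substantive step you describe beyond ``compute and bound the coefficients,'' namely the absorption of the off-diagonal $\varepsilon^2$ block. You write that the cross terms should be Young-estimated ``against the $\varepsilon^3$ diagonal block scaled by $1/\varepsilon$ and the $\varepsilon$ diagonal block scaled by $\varepsilon$, producing contributions of size $\varepsilon^2$ to each diagonal block — harmless because they share the same $\varepsilon$-power structure.'' This does not work: a contribution of size $\varepsilon^2$ to the $(1,1)$ block \emph{cannot} be absorbed by enlarging the constant in a $\theta_1\varepsilon^3$ bound, since $\varepsilon^2/\varepsilon^3\to\infty$ as $\varepsilon\to 0$. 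The correct balance is the Schur-complement one: with the $(1,1)$ block $A\sim\varepsilon^3$, the $(2,2)$ block $C\sim\varepsilon$, and the off-diagonal $B\sim\varepsilon^2$, one checks $B^{T}A^{-1}B\sim\varepsilon^4/\varepsilon^3=\varepsilon\le C$ (up to the $p_0$-weights, which you must also track); the absorbed pieces are then of size $\varepsilon^3$ in the $(1,1)$ block and $\varepsilon$ in the $(2,2)$ block, matching the claimed right-hand side with $\theta_1,\theta_2$ independent of $\varepsilon$. As literally stated, your scaling fails. You also garble the auxiliary matrix inequality: you write $\tfrac{1}{p_0^2}I\le I-\tfrac{p\otimes p}{p_0^2}$, declare it false, and then state the identical inequality as the ``correct direction''; in fact it is true (it follows from \eqref{hessian inq.}), and the paper uses it in the form recorded in \eqref{aux.mat.ine}. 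Finally, your plan is a sketch: the actual differentiations $\partial_{p_i}P$ and $\partial^2_{p_ip_j}P$ and their contraction against $a_{ij}$ must be carried out explicitly (as the paper does over roughly a page) before the bounded-coefficient claim can be read off.
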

\begin{proof} 
We have
\begin{equation}\label{5star}
\sum_{i,j=1}^d \frac{1}{f_{\infty}}\partial_{p_j}(\partial_{p_i}Pa_{ij}f_{\infty})=\sum_{i,j=1}^d \partial^2_{p_ip_j}P a_{ij} +\sum_{i=1}^d\partial_{p_i}P \sum_{j=1}^d\left(\partial_{p_j}a_{ij}-\frac{a_{ij}p_j}{p_0}\right).
\end{equation}
Since $a_{ij}=\frac{\delta_{ij}+p_ip_j}{p_0},$ we have  $$\sum_{j=1}^d\left(\partial_{p_j}a_{ij}-\frac{a_{ij}p_j}{p_0}\right)=\sum_{j=1}^d\left(\frac{p_i+\delta_{ij}p_j}{p_0}-\frac{\delta_{ij}+p_ip_j}{p_0^3}p_j-\frac{(\delta_{ij}+p_ip_j)p_j}{p^2_0}\right)=\left(\frac{d}{p_0}-1\right)p_i.$$ 
We denote $\varepsilon_1=\varepsilon_1(x)\colonequals \frac{\varepsilon}{V_0(x)}>0$ which is uniformly bounded for $x\in \mathbb{R}^d.$ Then
$$\partial_{p_i}P=\begin{pmatrix}
\frac{-6\varepsilon^3_1 p_i}{p_0^5} I -\frac{2\varepsilon^3_1 }{p_0^5}\partial_{p_i}(p\otimes p)+\frac{10\varepsilon^3_1p_i}{p_0^7}p\otimes p & -\frac{2\varepsilon^2_1 p_i}{p_0^4}I\\-\frac{2\varepsilon^2_1 p_i}{p_0^4}I& -\frac{2\varepsilon_1 p_i}{p_0^3} (I+p\otimes p)+\frac{2\varepsilon_1}{p_0}\partial_{p_i}(p\otimes p)
\end{pmatrix}.$$ 
The last two equations show
\begin{multline}\label{d/p-1}
\sum_{i=1}^d\partial_{p_i}P \sum_{j=1}^d\left(\partial_{p_j}a_{ij}-\frac{a_{ij}p_j}{p_0}\right)=\left(\frac{d}{p_0}-1\right)\sum_{i=1}^d\partial_{p_i}P p_i\\
=\left(\frac{d}{p_0}-1\right)\begin{pmatrix}
\frac{-6\varepsilon^3_1 |p|^2}{p_0^5} I -\frac{4\varepsilon^3_1 }{p_0^5}p\otimes p+\frac{10\varepsilon^3_1|p|^2}{p_0^7} p\otimes p&- \frac{2\varepsilon^2_1 |p|^2}{p_0^4}I\\-\frac{2\varepsilon^2_1 |p|^2}{p_0^4}I& -\frac{2\varepsilon_1 |p|^2}{p_0^3} (I+p\otimes p)+\frac{4\varepsilon_1}{p_0} p\otimes p
\end{pmatrix}\\
=\left(1-\frac{d}{p_0}\right)\begin{pmatrix}
\frac{6\varepsilon^3_1 }{p_0^3}( I-\frac{p \otimes p}{p_0^2}) +\frac{10\varepsilon^3_1 p\otimes p}{p_0^7}-\frac{6\varepsilon^3_1 }{p_0^5} I & \frac{2\varepsilon^2_1 }{p_0^2}I- \frac{2\varepsilon^2_1}{p_0^4}I\\ \frac{2\varepsilon^2_1 }{p_0^2}I- \frac{2\varepsilon^2_1}{p_0^4}I & -\frac{2\varepsilon_1 }{p_0} (I+p\otimes p)+\frac{4\varepsilon_1}{p_0}I-\frac{2\varepsilon_1 }{p_0^3}(I+p\otimes p)
\end{pmatrix}.
\end{multline}
One can easily check
\begin{equation}\label{aux.mat.ine}
\frac{p\otimes p}{p_0^2}<  I,\, \, \, \frac{1}{p_0^5}I\leq  \frac{1}{p_0^3}\left(I-\frac{p \otimes p}{p_0^2}\right), \, \, \, \frac{1}{p_0}I\leq \frac{1}{p_0}(I+p\otimes p).
\end{equation} 
These matrix inequalities help us to estimate  
\begin{equation}\label{d/p-1 6}\left(1-\frac{d}{p_0}\right)\left(\frac{6\varepsilon^3_1  }{p_0^3}( I-\frac{p \otimes p}{p_0^2}) +\frac{10\varepsilon^3_1 p\otimes p}{p_0^7}-\frac{6\varepsilon^3_1 }{p_0^5} I\right)\leq \frac{22\varepsilon^3_1 \left|1-\frac{d}{p_0}\right| }{p_0^3}\left( I-\frac{p \otimes p}{p_0^2}\right) 
\end{equation}
and 
\begin{equation}\label{d/p-1 2}\left(1-\frac{d}{p_0}\right)\left(-\frac{2\varepsilon_1 }{p_0} (I+p\otimes p)+\frac{4\varepsilon_1}{p_0}I-\frac{2\varepsilon_1 }{p_0^3}(I+p\otimes p)\right)\leq  \frac{8\varepsilon_1 \left|1-\frac{d}{p_0}\right| }{p_0}\left( I+p \otimes p\right). 
\end{equation}
\eqref{d/p-1}, \eqref{d/p-1 6}, and \eqref{d/p-1 2} imply
\begin{align}\label{2matin}\sum_{i=1}^d\partial_{p_i}P \sum_{j=1}^d\left(\partial_{p_j}a_{ij}-\frac{a_{ij}p_j}{p_0}\right)&\leq \begin{pmatrix}
\frac{22\varepsilon^3_1 \left|1-\frac{d}{p_0}\right| }{p_0^3}\left( I-\frac{p \otimes p}{p_0^2}\right)  & \left(1-\frac{d}{p_0}\right)\left(\frac{2\varepsilon^2_1 }{p_0^2}- \frac{2\varepsilon^2_1}{p_0^4}\right)I\\ \left(1-\frac{d}{p_0}\right)\left(\frac{2\varepsilon^2_1 }{p_0^2}- \frac{2\varepsilon^2_1}{p_0^4}\right)I & \frac{8\varepsilon_1 \left|1-\frac{d}{p_0}\right| }{p_0}\left( I+p \otimes p\right) 
\end{pmatrix}\nonumber\\
&\leq \begin{pmatrix}
\frac{24\varepsilon^3_1 \left|1-\frac{d}{p_0}\right| }{p_0^3}\left( I-\frac{p \otimes p}{p_0^2}\right)  & 0\\ 0 & \frac{10\varepsilon_1 \left|1-\frac{d}{p_0}\right| }{p_0}\left( I+p \otimes p\right) 
\end{pmatrix}.
\end{align}
In the last inequality, we used the fact that
\begin{equation*}
\begin{pmatrix}
-\frac{2\varepsilon^3_1\left|1-\frac{d}{p_0}\right|}{p_0^3}( I-\frac{p \otimes p}{p_0^2})  & \left(1-\frac{d}{p_0}\right)\left(\frac{2\varepsilon^2_1 }{p_0^2}- \frac{2\varepsilon^2_1}{p_0^4}\right)I\\ \left(1-\frac{d}{p_0}\right)\left(\frac{2\varepsilon^2_1 }{p_0^2}- \frac{2\varepsilon^2_1}{p_0^4}\right)I & -\frac{2\varepsilon_1 \left|1-\frac{d}{p_0}\right|}{p_0} (I+p\otimes p)
\end{pmatrix}
\end{equation*}
is negative semi-definite, which can be proven by using the second inequality in \eqref{Mat.ineq}. 
Since  $\left|1-\frac{d}{p_0}\right|$ is uniformly bounded for $p\in \mathbb{R}^d,$ \eqref{2matin} shows that   there are constants  $C_1>0$ and $C_2>0$ such that 
\begin{equation}\label{sum<P}\sum_{i=1}^d\partial_{p_i}P \sum_{j=1}^d\left(\partial_{p_j}a_{ij}-\frac{a_{ij}p_j}{p_0}\right)\leq  \begin{pmatrix}
\frac{2C_1\varepsilon^3_1}{p_0^{3}} (I-\frac{p\otimes p}{p_0^2})& 0\\
0& \frac{2C_2\varepsilon_1}{p_0} (I+p\otimes p)
\end{pmatrix}, \, \, \, \, \forall\, x,\, p\in \mathbb{R}^d.
\end{equation}
Next, using the computation above for $\partial_{p_i} P$ we compute 
$$\partial^2_{p_j p_i}P=\begin{pmatrix}
X_{ij}& -\frac{2\varepsilon^2_1 \delta_{ij}}{p_0^4}I+\frac{8\varepsilon^2_1 p_i p_j}{p_0^6}I\\-\frac{2\varepsilon^2_1 \delta_{ij}}{p_0^4}I+\frac{8\varepsilon^2_1 p_i p_j}{p_0^6}I&  Y_{ij}\end{pmatrix},$$
where
$$X_{ij}\colonequals -\frac{2\varepsilon^3_1 [3\delta_{ij} I+\partial_{p_ip_j}^2 (p \otimes p)]}{p_0^5}  +\frac{10\varepsilon^3_1 [3 p_i p_j I+ p_j\partial_{p_i}(p \otimes p)+p_i\partial_{p_j}(p \otimes p)+\delta_{ij} p \otimes p]}{p_0^7}-\frac{70 \varepsilon^3_1p_ip_j p\otimes p}{p_0^9}, $$
$$Y_{ij}\colonequals -2\varepsilon_1\left(\frac{ \delta_{ij}}{p_0^3}-\frac{3 p_i p_j}{p_0^5}\right) (I+p\otimes p)-\frac{2\varepsilon_1[p_i\partial_{p_j}(p \otimes p)+p_j\partial_{p_i}(p \otimes p)]}{p^3_0}+\frac{2\varepsilon_1}{p_0}\partial^2_{p_i p_j}(p\otimes p).
$$
The identities  
$$\sum_{i,j=1}^d \delta_{ij}\partial_{p_ip_j}^2 (p \otimes p)=2I,\,\,\, \sum_{i,j=1}^d  p_ip_j\partial_{p_ip_j}^2 (p \otimes p)=2 p \otimes p  , \, \, \,\, \sum_{i=1}^d  p_i\partial_{p_i}(p \otimes p)=2p\otimes p $$ will be used in the following computations:
$$X\colonequals \sum_{i,j=1}^d X_{ij}a_{ij}=-\sum_{i,j=1}^d\frac{2\varepsilon^3_1 [3\delta_{ij} I+\delta_{ij}\partial_{p_ip_j}^2 (p \otimes p)+3\delta_{ij}p_i p_j I+p_ip_j\partial_{p_ip_j}^2 (p \otimes p)]}{p_0^6}$$
$$+\sum_{i,j=1}^d \frac{10\varepsilon^3_1  [3\delta_{ij} p_i p_j I+ \delta_{ij} p_j\partial_{p_i}(p \otimes p)+\delta_{ij} p_i\partial_{p_j}(p \otimes p)+\delta_{ij} p \otimes p]}{p_0^8}$$
$$+\sum_{i,j=1}^d \frac{10\varepsilon^3_1 p_i p_j  [3 p_i p_j I+ p_j\partial_{p_i}(p \otimes p)+p_i\partial_{p_j}(p \otimes p)+\delta_{ij} p \otimes p]}{p_0^8}$$
$$-\sum_{i,j=1}^d \frac{70 \varepsilon^3_1[\delta _{ij} +p_ip_j ]p_ip_j p\otimes p}{p_0^{10}}$$
$$=-\frac{2\varepsilon^3_1 [3(d-1 +p_0^2)I+2(I+p \otimes p)]}{p_0^6}+ \frac{10\varepsilon^3_1  [3 |p|^2 p_0^2 I+4 p_0^2p \otimes p +(d-1+p_0^2)p \otimes p ]}{p_0^8}-\frac{70 \varepsilon^3_1 |p|^2 p\otimes p}{p_0^{8}}$$
$$=\frac{24 \varepsilon^3_1}{p_0^4}\left(I-\frac{p\otimes p}{p_0^2}\right)-\frac{ \varepsilon^3_1(28+6d)}{p_0^6}I+\frac{ \varepsilon^3_1(60+10d)}{p_0^8}p\otimes p,$$
$$Y \colonequals \sum_{i,j=1}^d Y_{ij}a_{ij}=-\sum_{i,j=1}^d 2\varepsilon_1\left(\frac{ \delta_{ij}}{p_0^4}+\frac{ \delta_{ij}p_ip_j}{p_0^4}-\frac{3\delta_{ij} p_i p_j}{p_0^6}-\frac{3 p^2_i p^2_j}{p_0^6}\right) (I+p\otimes p)$$
$$-\sum_{i,j=1}^d \frac{2\varepsilon_1  [\delta_{ij} p_i\partial_{p_j}(p \otimes p)+ \delta_{ij} p_j\partial_{p_i}(p \otimes p)]+2\varepsilon_1 p_i p_j[p_i\partial_{p_j}(p \otimes p)+p_j\partial_{p_i}(p \otimes p)]}{p^4_0}$$
$$+\sum_{i,j=1}^d \frac{2\varepsilon_1}{p^2_0}[\delta_{ij}\partial^2_{p_i p_j}(p\otimes p)+p_ip_j \partial^2_{p_i p_j}(p\otimes p)]
$$
$$=\left(\frac{ 4\varepsilon_1}{p_0^2}-\frac{2\varepsilon_1 (d+2) }{p_0^4}\right) (I+p\otimes p)- \frac{8 \varepsilon_1}{p_0^2} p\otimes p+ \frac{4\varepsilon_1}{p_0^2} (I+p\otimes p)$$
$$=\frac{8\varepsilon_1}{p_0^2}I- \frac{2 \varepsilon_1 (d+2)}{p_0^4}(I+p \otimes p),$$
$$Z \colonequals  \sum_{i,j=1}^d \left(-\frac{2\varepsilon_1^2 \delta_{ij}}{p_0^4}I+\frac{8\varepsilon_1^2 p_i p_j}{p_0^6}I\right)a_{ij}=-\sum_{i,j=1}^d\left(\frac{2\varepsilon^2_1 \delta_{ij}}{p_0^5}I-\frac{8\varepsilon^2_1 \delta_{ij} p_i p_j}{p_0^7}I+\frac{2\varepsilon^2_1 \delta_{ij}p_ip_j}{p_0^5}I-\frac{8\varepsilon^2_1 p^2_i p^2_j}{p_0^7}I\right)$$
$$=\frac{6\varepsilon^2_1 }{p_0^3}I-\frac{2\varepsilon^2_1 (d+3)}{p_0^5}I.$$
According to our notations, we have 
\begin{align}\label{24e_1^3/p_0^4}
\sum_{i,j=1}^d \partial^2_{p_ip_j}P a_{ij}&=\begin{pmatrix}
X&Z\\Z& Y
\end{pmatrix}\nonumber \\
&=\begin{pmatrix}
\frac{24 \varepsilon^3_1}{p_0^4}(I-\frac{p\otimes p}{p_0^2})-\frac{ \varepsilon^3_1(28+6d)}{p_0^6}I+\frac{ \varepsilon^3_1(60+10d)}{p_0^8}p\otimes p& \frac{6\varepsilon^2_1 }{p_0^3}I-\frac{2\varepsilon^2_1 (d+3)}{p_0^5}I\\ \frac{6\varepsilon^2_1 }{p_0^3}I-\frac{2\varepsilon^2_1 (d+3)}{p_0^5}I & \frac{8\varepsilon_1}{p_0^2}I- \frac{2 \varepsilon_1 (d+2)}{p_0^4}(I+p \otimes p)
\end{pmatrix}.
\end{align}
The matrix inequalities in \eqref{aux.mat.ine} help us to estimate
\begin{equation*}
\frac{24 \varepsilon^3_1}{p_0^4}\left(I-\frac{p\otimes p}{p_0^2}\right)-\frac{ \varepsilon^3_1(28+6d)}{p_0^6}I+\frac{ \varepsilon^3_1(60+10d)}{p_0^8}p\otimes p\leq \frac{ (84+10d)\varepsilon^3_1}{p_0^4}\left(I-\frac{p\otimes p}{p_0^2}\right)
\end{equation*}
and 
\begin{equation*}
\frac{8\varepsilon_1}{p_0^2}I- \frac{2 \varepsilon_1 (d+2)}{p_0^4}(I+p \otimes p)\leq \frac{8\varepsilon_1}{p_0^2}(I+p \otimes p).
\end{equation*}
These estimates and \eqref{24e_1^3/p_0^4} show
\begin{align}\label{84+10d}
\sum_{i,j=1}^d \partial^2_{p_ip_j}P a_{ij}
&\leq \begin{pmatrix}
\frac{ (84+10d)\varepsilon^3_1}{p_0^4}\left(I-\frac{p\otimes p}{p_0^2}\right)& \frac{6\varepsilon^2_1 }{p_0^3}I-\frac{2\varepsilon^2_1 (d+3)}{p_0^5}I\\ \frac{6\varepsilon^2_1 }{p_0^3}I-\frac{2\varepsilon^2_1 (d+3)}{p_0^5}I &  \frac{8\varepsilon_1}{p_0^2}(I+p \otimes p)
\end{pmatrix}\nonumber \\
&\leq \begin{pmatrix}
\frac{ \left(84+10d+\left|6-\frac{2(d+3)}{p_0^2}\right|\right)\varepsilon^3_1}{p_0^4}\left(I-\frac{p\otimes p}{p_0^2}\right)& 0\\ 0& &  \frac{\left(8+\left|6-\frac{2(d+3)}{p_0^2}\right|\right)\varepsilon_1}{p_0^2}(I+p \otimes p)
\end{pmatrix}.
\end{align} 
In the last inequality, we used the fact that
\begin{equation*}
\begin{pmatrix}
-\frac{ \left|6-\frac{2(d+3)}{p_0^2}\right|\varepsilon^3_1}{p_0^4}\left(I-\frac{p\otimes p}{p_0^2}\right)& \frac{6\varepsilon^2_1 }{p_0^3}I-\frac{2\varepsilon^2_1 (d+3)}{p_0^5}I\\ \frac{6\varepsilon^2_1 }{p_0^3}I-\frac{2\varepsilon^2_1 (d+3)}{p_0^5}I &  -\frac{\left|6-\frac{2(d+3)}{p_0^2}\right|\varepsilon_1}{p_0^2}(I+p \otimes p)
\end{pmatrix}
\end{equation*}
is negative semi-definite, which can be proven by using the second inequality in \eqref{Mat.ineq}. 
Since $\left|6-\frac{2(d+3)}{p_0^2}\right|$ is uniformly bounded for $p\in \mathbb{R}^d,$ \eqref{84+10d} shows that there are  constants $C_1'>0$ and $C_2'>0$  such that 
  \begin{align*} \sum_{i,j=1}^d \partial_{p_ip_j}P a_{ij}&\leq\begin{pmatrix}
\frac{2C_1'\varepsilon^3_1}{p_0^{4}} (I-\frac{p\otimes p}{p_0^2})& 0\\
0& \frac{2C_2'\varepsilon_1}{p_0^2} (I+p\otimes p)
\end{pmatrix}\\
&\leq\begin{pmatrix}
\frac{2C_1'\varepsilon^3_1}{p_0^{3}} (I-\frac{p\otimes p}{p_0^2})& 0\\
0& \frac{2C_2'\varepsilon_1}{p_0} (I+p\otimes p)
\end{pmatrix}, \, \, \, \, \forall p\in \mathbb{R}^d.
\end{align*}
 Using this estimate and \eqref{sum<P} in \eqref{5star}, we obtain the claimed result.
\end{proof}
\begin{lemma}\label{heavy comp2} Let $P=P(x,p)$ be the matrix defined in \eqref{Pmat}. Assume there exists a constant $c_3>0$ such that  \begin{equation}\label{VV}
 \left|\left|\frac{\partial^2 V(x)}{\partial x^2}\right|\right|_F\leq c_3(1+|\nabla_x V(x)|), \, \, \, \, \forall \, x \in \mathbb{R}^d.
\end{equation} Then there are constants $\theta_3>0$ and $\theta_4>0$ such that 
\begin{equation}\label{6star}\sum_{i=1}^d\left(\frac{p_i}{p_0} \partial_{x_i} P -\partial_{x_i} V  \partial_{p_i} P\right)\leq \begin{pmatrix}
\frac{2\theta_3\varepsilon^3}{V_0^2p_0^3} (I-\frac{p\otimes p}{p_0^2})& 0\\
0& \frac{2\theta_4\varepsilon}{p_0} (I+p\otimes p)
\end{pmatrix}, \, \, \, \, \forall \, x, \,p\in \mathbb{R}^d.
\end{equation}
\end{lemma}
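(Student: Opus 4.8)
The plan is to differentiate the explicit matrix $P$ of \eqref{Pmat}, carry out the sum over $i$, and estimate the outcome by elementary matrix inequalities, in exactly the spirit of the proof of Lemma \ref{heavy comp} (note that here there is only a single sum over $i$, with no contraction against $a_{ij}$, so the algebra is somewhat lighter). The crucial structural fact is that $P$ depends on $x$ only through the scalar $V_0(x)=\sqrt{1+|\nabla_x V(x)|^2}$, so that $\partial_{x_i}P=(\partial_{V_0}P)\,\partial_{x_i}V_0$ with $\partial_{x_i}V_0=V_0^{-1}\bigl(\tfrac{\partial^2 V}{\partial x^2}\nabla_x V\bigr)_i$, and the three blocks of $\partial_{V_0}P$ are $-\tfrac{3}{V_0}P_{11}$, $-\tfrac{2}{V_0}P_{12}$, $-\tfrac{1}{V_0}P_{22}$. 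Hypothesis \eqref{VV} enters precisely here: together with $|p|\le p_0$, $|\nabla_x V|\le V_0$ and $1+|\nabla_x V|\le 2V_0$ it gives the uniform scalar bound
\[
\Bigl|\tfrac1{p_0}\textstyle\sum_i p_i\,\partial_{x_i}V_0\Bigr|
=\frac{\bigl|p^{T}\tfrac{\partial^2 V}{\partial x^2}\nabla_x V\bigr|}{V_0 p_0}
\le\frac{c_3\,|p|\,(1+|\nabla_x V|)\,|\nabla_x V|}{V_0 p_0}\le 2c_3 V_0 .
\]
Consequently $\sum_i\tfrac{p_i}{p_0}\partial_{x_i}P$ equals $\partial_{V_0}P$ times a scalar of modulus $\le 2c_3V_0$, so it contributes to the $(1,1)$-, off-diagonal- and $(2,2)$-blocks quantities bounded respectively by $\tfrac{C\varepsilon^3}{V_0^3 p_0^3}(I-\tfrac{p\otimes p}{p_0^2})\le\tfrac{C\varepsilon^3}{V_0^2 p_0^3}(I-\tfrac{p\otimes p}{p_0^2})$, $\tfrac{C\varepsilon^2}{V_0^2 p_0^2}\,I$, and $\tfrac{C\varepsilon}{V_0 p_0}(I+p\otimes p)$.

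For the remaining term $-\sum_i\partial_{x_i}V\,\partial_{p_i}P$ I would compute $\partial_{p_i}P$ block by block, using $\partial_{p_i}(p\otimes p)=e_i\otimes p+p\otimes e_i$ ($e_i$ the $i$-th coordinate vector) and $\partial_{p_i}p_0^{-k}=-k\,p_i\,p_0^{-k-2}$, and contract with $\partial_{x_i}V$; after contraction the only matrices that occur are $(p\cdot\nabla_x V)\,I$, $(p\cdot\nabla_x V)\,(p\otimes p)$ and $\nabla_x V\otimes p+p\otimes\nabla_x V$. For the $(2,2)$-block crude bounds suffice: from $|p\cdot\nabla_x V|\le p_0 V_0$ and the Young-type inequality
\[
\pm\bigl(\nabla_x V\otimes p+p\otimes\nabla_x V\bigr)\le \tfrac1{V_0}\,\nabla_x V\otimes\nabla_x V+V_0\,p\otimes p\le V_0\,(I+p\otimes p)
\]
(the weight $V_0$ being chosen so that only one power of $V_0$ survives) one gets $-\sum_i\partial_{x_i}V\,\partial_{p_i}P_{22}\le \tfrac{C\varepsilon}{p_0}(I+p\otimes p)$, and likewise for the off-diagonal block. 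The $(1,1)$-block is the genuinely delicate point: a term-by-term estimate of $-\sum_i\partial_{x_i}V\,\partial_{p_i}P_{11}$ leaves, in the direction of $p$, a spurious contribution of order $\varepsilon^3/(V_0^2 p_0^4)$, which is \emph{not} dominated by $\varepsilon^3/(V_0^2 p_0^5)$ — the largest value $\tfrac{\varepsilon^3}{V_0^2 p_0^3}(I-\tfrac{p\otimes p}{p_0^2})$ permits along $p$. One must therefore keep the sum exact, as in the computation of $X,Y,Z$ in the proof of Lemma \ref{heavy comp}: the combination
\[
-\frac{3(p\cdot\nabla_x V)}{p_0^5}\,I-\frac{\nabla_x V\otimes p+p\otimes\nabla_x V}{p_0^5}+\frac{5(p\cdot\nabla_x V)\,p\otimes p}{p_0^7}
\]
collapses along $p$ to $-5(p\cdot\nabla_x V)p_0^{-7}$, which is $O(V_0 p_0^{-6})$ and hence admissible, while on $p^{\perp}$ it is $O(V_0 p_0^{-4})$; carrying out this reduction in full yields $-\sum_i\partial_{x_i}V\,\partial_{p_i}P_{11}\le \tfrac{C\varepsilon^3}{V_0^2 p_0^3}(I-\tfrac{p\otimes p}{p_0^2})$.

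Combining the two terms, $\sum_i\bigl(\tfrac{p_i}{p_0}\partial_{x_i}P-\partial_{x_i}V\,\partial_{p_i}P\bigr)$ has the form $\begin{pmatrix} A & \beta I\\ \beta I & C\end{pmatrix}$ with $A\le C_1\varepsilon^3 A_1$, $C\le C_2\varepsilon A_2$, where $A_1\colonequals \tfrac1{V_0^2 p_0^3}(I-\tfrac{p\otimes p}{p_0^2})$ and $A_2\colonequals \tfrac1{p_0}(I+p\otimes p)$, and a scalar $\beta$ with $|\beta|\le C_3\varepsilon^2\bigl(V_0^{-2}p_0^{-2}+V_0^{-1}p_0^{-3}\bigr)$. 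Since $(I-\tfrac{p\otimes p}{p_0^2})(I+p\otimes p)=I$, one has $A_1A_2=\tfrac1{V_0^2 p_0^4}I$ and $A_1^{-1}=V_0^2 p_0^3(I+p\otimes p)$; hence, by the second inequality in \eqref{Mat.ineq} applied with $A=\theta_1 A_1$, the matrix $\begin{pmatrix} -\theta_1 A_1 & \beta I\\ \beta I & -\theta_2 A_2\end{pmatrix}$ is negative semidefinite as soon as $\theta_1\theta_2\ge\beta^2 V_0^2 p_0^4$ pointwise, and $\beta^2 V_0^2 p_0^4\le C_3^2\varepsilon^4(V_0^{-1}+p_0^{-1})^2\le 4C_3^2\varepsilon^4$. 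Adding such a matrix reduces the bound to the block-diagonal one $\begin{pmatrix} A+\theta_1 A_1 & 0\\ 0& C+\theta_2 A_2\end{pmatrix}$, and taking $\theta_1=c\varepsilon^3$, $\theta_2=c\varepsilon$ with $c$ large enough (then $2\theta_3\varepsilon^3\colonequals C_1\varepsilon^3+\theta_1$ and $2\theta_4\varepsilon\colonequals C_2\varepsilon+\theta_2$) yields \eqref{6star}. The only real obstacle in all this is the bookkeeping of the exact $p$-derivative sums in the $(1,1)$-block; the rest is a repetition of the estimates already carried out for Lemma \ref{heavy comp}.
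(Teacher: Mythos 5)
Your proposal is correct and follows essentially the same architecture as the paper's proof: exploit that $P$ depends on $x$ only through $V_0$ and use the chain rule to handle $\sum_i\frac{p_i}{p_0}\partial_{x_i}P$ (the scalar bound you get from \eqref{VV} is the same as the paper's, just with the powers of $V_0$ grouped differently), compute $\partial_{p_i}P$ explicitly for the second term, estimate block by block, and absorb the off-diagonal block by the Young-type inequality in \eqref{Mat.ineq}. The one place where you genuinely diverge is the $(1,1)$-block of $-\sum_i\partial_{x_i}V\,\partial_{p_i}P$. The paper does \emph{not} sum over $i$ first; it keeps the $i$-indexed expression and proves the pointwise sandwich
\[
-\frac{1}{p_0^3}\Bigl(I-\frac{p\otimes p}{p_0^2}\Bigr)\le \frac{1}{p_0^5}\bigl(2p_iI-\partial_{p_i}(p\otimes p)\bigr)\le \frac{1}{p_0^3}\Bigl(I-\frac{p\otimes p}{p_0^2}\Bigr),
\]
obtained from the exact rank-one identities $\frac{1}{p_0^3}(I-\frac{p\otimes p}{p_0^2})\mp\frac{1}{p_0^5}(2p_iI-\partial_{p_i}(p\otimes p))=\frac{1}{p_0^5}\bigl(E_i+|p\mp e_i|^2I-(p\mp e_i)\otimes(p\mp e_i)\bigr)\ge 0$. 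This gives, for each $i$ separately, a bound by $\frac{|\partial_{x_i}V|}{p_0^3}(I-\frac{p\otimes p}{p_0^2})$, and one then sums $\sum_i|\partial_{x_i}V|\le\sqrt{d}\,V_0$. Your route—summing over $i$ first and decomposing the resulting matrix $-\frac{3(p\cdot\nabla_x V)}{p_0^5}I-\frac{\nabla_x V\otimes p+p\otimes\nabla_x V}{p_0^5}+\frac{5(p\cdot\nabla_x V)p\otimes p}{p_0^7}$ along $p$ and $p^\perp$—is also viable, and your observation that the naive term-by-term estimate overshoots in the $p$-direction by a factor $p_0$ is the right diagnosis of the difficulty; the collapse to $-5(p\cdot\nabla_x V)p_0^{-7}$ along $p$ is correct. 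However, your sketch skips the $p$--$p^\perp$ cross term $q^TMp/|p|=-(q\cdot\nabla_x V)|p|\,p_0^{-5}$, which is of the borderline order $V_0 p_0^{-4}$: one must check that $(N_{pp}-M_{pp})(N_{qq}-M_{qq})\ge M_{pq}^2$ (both sides are $\sim V_0^2p_0^{-8}$), so the constant $\theta_3$ has to be taken large enough. This does work, but it is an extra Cauchy--Schwarz verification that the paper's sandwich inequality sidesteps entirely: the sandwich already encodes the anisotropic $I-\frac{p\otimes p}{p_0^2}$ weight for each $i$, making the final summation trivial. Your final block-diagonalisation using $A_1A_2=\frac{1}{V_0^2p_0^4}I$ is correct and coincides with the paper's use of the second inequality in \eqref{Mat.ineq}.
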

\begin{proof} We compute
\begin{multline}\label{pixiP}\sum_{i=1}^d\frac{p_i}{p_0} \partial_{x_i} P=\left[\sum_{i=1}^d\frac{\nabla_x V \cdot \nabla_x (\partial_{x_i} V)p_i}{V_0^2p_0}\right]\begin{pmatrix}
-\frac{6\varepsilon^3}{V_0^3p_0^3} (I-\frac{p\otimes p}{p_0^2})& \frac{-2\varepsilon^2}{V_0^2p_0^2}I\\
 \frac{-2\varepsilon^2}{V_0^2p_0^2}I&  \frac{-2\varepsilon }{V_0 p_0} (I+p\otimes p)
\end{pmatrix}
\\
=\left[\frac{1}{V_0^2p_0}\nabla^T_x V \frac{\partial^2 V}{\partial x^2 }p\right]\begin{pmatrix}
-\frac{6\varepsilon^3}{V_0^3p_0^3} (I-\frac{p\otimes p}{p_0^2})& \frac{-2\varepsilon^2}{V_0^2p_0^2}I\\
 \frac{-2\varepsilon^2}{V_0^2p_0^2}I&  \frac{-2\varepsilon }{V_0 p_0} (I+p\otimes p)
\end{pmatrix}.
\end{multline}
 Since \eqref{VV} implies $\left|\left|\frac{\partial^2 V}{\partial x^2 }\right|\right|_{F}\leq \sqrt{2}c_3 V_0,$ we have  
 $$\left|\frac{1}{V_0^2p_0}\nabla^T_x V \frac{\partial^2 V}{\partial x^2 }p\right|\leq \frac{1}{V_0^2p_0}|\nabla_x V|\left|\left|\frac{\partial^2 V}{\partial x^2 }\right|\right|_{F}|p|\leq \sqrt{2}c_3,\, \, \, \, \forall \, x, \,p\in \mathbb{R}^d. $$
  This uniform bound and \eqref{pixiP} show that there are constants $C_1>0$ and $C_2$ such that
  \begin{equation}\label{pP}
  \sum_{i=1}^d\frac{p_i}{p_0} \partial_{x_i} P\leq \begin{pmatrix}
\frac{2C_1\varepsilon^3}{V_0^3p_0^3} (I-\frac{p\otimes p}{p_0^2})& 0\\
0& \frac{2C_2\varepsilon}{V_0p_0} (I+p\otimes p)
\end{pmatrix}\leq  \begin{pmatrix}
\frac{2C_1\varepsilon^3}{V_0^2p_0^3} (I-\frac{p\otimes p}{p_0^2})& 0\\
0& \frac{2C_2\varepsilon}{p_0} (I+p\otimes p)
\end{pmatrix}, \, \, \, \, \forall \, x, \,p\in \mathbb{R}^d.
  \end{equation}
  Next, we compute \begin{multline}\label{VP}
 - \sum_{i=1}^d\partial_{x_i} V  \partial_{p_i} P\\=\sum_{i=1}^d  \begin{pmatrix} 2\varepsilon^3\left[
\frac{5\partial_{x_i} V p_i}{V_0^3p_0^5} (I-\frac{p\otimes p}{p_0^2}) -\frac{\partial_{x_i} V }{V^3_0p_0^5}(2p_iI-\partial_{p_i}(p\otimes p)) \right]& \frac{2\varepsilon^2 \partial_{x_i} V p_i}{V^2_0p_0^4}I\\ \frac{2\varepsilon^2 \partial_{x_i} V p_i}{V_0^2p_0^4}I& \frac{2\varepsilon \partial_{x_i}  V p_i}{V_0p_0^3} (I+p\otimes p)-\frac{2\varepsilon \partial_{x_i} V}{V_0p_0}\partial_{p_i}(p\otimes p)
\end{pmatrix}.
  \end{multline} 
   We denote  $\tilde{p}_i\colonequals \begin{pmatrix}
  p_1\\\vdots\\p_{i-1}\\p_i-1\\p_{i+1}\\\vdots\\ p_d
  \end{pmatrix}=p-e_i$ and $\bar{p}_i\colonequals \begin{pmatrix}
  p_1\\\vdots\\p_{i-1}\\p_i+1\\p_{i+1}\\\vdots \\p_d
  \end{pmatrix}=p+e_i,$ where $e_i$ denotes the $i-$th unit vector, for $i\in \{1,..., d\}.$  Let $E_i\colonequals e_i\otimes e_i\in \mathbb{R}^{d\times d}$ denote the matrix whose element in the intersection of the $i-$th column and the $i-$th row equals 1 and all other elements are zero. Using $\partial_{p_i}(p\otimes p)=e_i\otimes p+p\otimes e_i$ one can check 
   $$\frac{1}{p_0^3} \left(I-\frac{p\otimes p}{p_0^2}\right)-\frac{1 }{p_0^5}(2p_iI-\partial_{p_i}(p\otimes p))=\frac{1}{p_0^5}E_i+ \frac{1 }{p_0^5} (|\tilde{p}_i|^2 I-\tilde{p}_i \otimes \tilde{p}_i)\geq 0$$
    and 
    $$\frac{1}{p_0^3} \left(I-\frac{p\otimes p}{p_0^2}\right)+\frac{1 }{p_0^5}(2p_iI-\partial_{p_i}(p\otimes p))=\frac{1}{p_0^5}E_i+ \frac{1 }{p_0^5} (|\bar{p}_i|^2 I-\bar{p}_i \otimes \bar{p}_i)\geq 0.$$
  From these equations we obtain
  $$-\frac{1}{p_0^3} \left(I-\frac{p\otimes p}{p_0^2}\right)\leq \frac{1 }{p_0^5}(2p_iI-\partial_{p_i}(p\otimes p))\leq \frac{1}{p_0^3} \left(I-\frac{p\otimes p}{p_0^2}\right).$$
    Using these inequalities and the fact that $\left|\frac{\partial_{x_i} V }{V_0}\right|$ and $\left|\frac{p_i}{p^2_0}\right|$ are uniformly bounded for $x, \, p\in \mathbb{R}^d,$ we conclude that there is a constant $C'_1>0$ such that 
    \begin{equation}\label{_11}2\varepsilon^3\left[
\frac{5\partial_{x_i} V p_i}{V_0^3p_0^5} \left(I-\frac{p\otimes p}{p_0^2}\right) -\frac{\partial_{x_i} V }{V^3_0p_0^5}(2p_iI-\partial_{p_i}(p\otimes p)) \right]\leq \frac{2C'_1\varepsilon^3}{V_0^2p_0^3} \left(I-\frac{p\otimes p}{p_0^2}\right), \, \, \,\, \, \forall x, p \in \mathbb{R}^d.
\end{equation}
The inequalities
$$I+p\otimes p-\partial_{p_i}(p\otimes p)= \tilde{p}_i \otimes \tilde{p}_i +I-E_i\geq 0 $$
and
$$I+p\otimes p+\partial_{p_i}(p\otimes p)= \bar{p}_i \otimes \bar{p}_i +I-E_i\geq 0 $$ are easy to check and they imply
$$-(I+p\otimes p)\leq \partial_{p_i}(p\otimes p)\leq  I+p\otimes p. $$
Using these inequalities and the fact that  $\left|\frac{\partial_{x_i} V }{V_0}\right|$ and $\left|\frac{p_i}{p^2_0}\right|$ are uniformly  bounded for $x, \, p \in \mathbb{R}^d,$ we conclude that there is a constant $C'_2>0$ such that 
\begin{equation}\label{_22}
\frac{2\varepsilon \partial_{x_i} V p_i}{V_0p_0^3} (I+p\otimes p)-\frac{2\varepsilon \partial_{x_i} V}{V_0p_0}\partial_{p_i}(p\otimes p)\leq \frac{2C'_2\varepsilon}{p_0} (I+p\otimes p).
\end{equation}
\eqref{VP}, \eqref{_11}, and \eqref{_22} show that there are constants $C''_1>0$ and $C''_2>0$ such that  
$$ - \sum_{i=1}^d\partial_{x_i} V  \partial_{p_i} P\leq \begin{pmatrix}
\frac{2C''_1\varepsilon^3}{V_0^2p_0^3} (I-\frac{p\otimes p}{p_0^2})& 0\\
0& \frac{2C''_2\varepsilon}{p_0} (I+p\otimes p)
\end{pmatrix}, \, \, \, \, \forall \, x, \,p\in \mathbb{R}^d.$$
This inequality and \eqref{pP} yield the claimed estimate \eqref{6star}.
\end{proof}
\textbf{Acknowledgement.} The  authors acknowledge support by the Austrian Science Fund (FWF)
project \href{https://doi.org/10.55776/F65}{10.55776/F65}. The second author is also funded  by the Deutsche Forschungsgemeinschaft (DFG, German Research Foundation) under Germany’s Excellence Strategy EXC 2044-390685587, Mathematics Münster: Dynamics–Geometry–Structure. \\

\textbf{Data Availability.} Data will be made available on reasonable request.\\

\textbf{Conflict of interest.} The authors have no conflict of interest to declare.

{}

\end{document}